\definecolor{refkey}{gray}{.5}   
\definecolor{labeled}{gray}{.5} 
\newtheorem{thm}{Theorem}[section]
\newtheorem{cor}[thm]{Corollary}
\newtheorem{prob}[thm]{Problem}
\newtheorem{lem}[thm]{Lemma}
\newtheorem{prop}[thm]{Proposition}
\theoremstyle{definition}
\newtheorem{defn}[thm]{Definition}
\newcommand{\ol}{\overline}
\newcommand{\sm}{\setminus}
\newcommand{\op}{orientation preserving}
\newcommand{\reals}{\mathbb{R}}
\newcommand{\R}{\mathbb{R}}
\newcommand{\A}{\mathcal{A}}
\newcommand{\p}{\mathcal{P}}
\newcommand{\hp}{\widehat{p}}
\newcommand{\zed}{\mathbb{Z}}
\newcommand{\C}{\mathbb{C}}
\newcommand{\sphere}{\mathbb{S}}
\newcommand{\ucirc}{\mathbb{S}^1}
\newcommand{\uc}{\mathbb{S}^1}
\newcommand{\var}[2]{\mathrm{Var(#1,#2)}}
\newcommand{\ind}[2]{\mathrm{Ind(#1,#2)}}
\newcommand{\win}{\mathrm{win}}
\newcommand{\0}{\emptyset}
\newcommand{\cl}{\mbox{Cl}}
\newcommand{\val}{\mbox{val}}
\newcommand{\Int}{\mbox{int}}
\newcommand{\bd}{\mbox{Bd}}
\newcommand{\dia}{\mbox{diam}}
\newcommand{\sh}{\mbox{Sh}}
\newcommand{\ch}{\mbox{Ch}}
\newcommand{\e}{\varepsilon}
\newcommand{\al}{\alpha}
\newcommand{\ba}{\beta}
\newcommand{\be}{\beta}
\newcommand{\si}{\sigma}
\newcommand{\da}{\delta}
\newcommand{\vp}{\varphi}
\newcommand{\la}{\lambda}
\newcommand{\nin}{\not\in}
\newcommand{\imp}{\mbox{Imp}}
\newcommand{\disk}{\mathbb{D}}
\newcommand{\real}{\mathbb{R}}
\newcommand{\RC}{\C^\infty}
\newcommand{\degree}{\text{degree}}
\newcommand{\iy}{\infty}
\newcommand{\g}{\mathbf{g}}
\begin{document}

\date{September 21, 2008}

\title[Fixed points]{Fixed points in non-invariant plane continua}
\author{Alexander~Blokh}

\thanks{The first author was partially
supported by NSF grant DMS-0456748}

\author{Lex~Oversteegen}

\thanks{The second author was partially  supported
by NSF grant DMS-0405774}

\address[Alexander~Blokh and Lex~Oversteegen]
{Department of Mathematics\\ University of Alabama at Birmingham\\
Birmingham, AL 35294-1170}

\email[Alexander~Blokh]{ablokh@math.uab.edu}
\email[Lex~Oversteegen]{overstee@math.uab.edu} \subjclass[2000]{Primary 37F10;
Secondary 37F50, 37B45, 37C25, 54F15, 54H25}

\keywords{Fixed points, plane continua, oriented maps, Complex dynamics; Julia set}

\begin{abstract}
If $f:[a,b]\to \mathbb{R}$, with $a<b$, is continuous and such that $a$ and $b$
are mapped in opposite directions by $f$, then $f$ has a fixed point in $I$.
Suppose that $f:\mathbb{C}\to\mathbb{C}$ is map and $X$ is a continuum. We
extend the above for certain continuous maps of dendrites $X\to D, X\subset D$
and for positively oriented maps $f:X\to \mathbb{C}, X\subset \mathbb{C}$ with
the continuum $X$ not necessarily invariant. Then we show that in certain cases
a holomorphic map $f:\mathbb{C}\to\mathbb{C}$ must have a fixed point $a$ in a
continuum $X$ so that either $a\in \mathrm{Int}(X)$ or $f$ exhibits rotation at
$a$.
\end{abstract}
\maketitle

\section{Introduction}

By $\C$ we denote the plane and by $\RC$ the Riemann sphere. The fixed point
problem, attributed to \cite{ster35}, is one of the central problems in plane
topology. It can be formulated as follows.

\begin{prob}\label{fpt} Suppose that $f:\C\to \C$ is continuous and $f(X)\subset X$
for a non-separating continuum $X$. Does there always exist a fixed point in $X$?
\end{prob}

As is, Problem~\ref{fpt} is not yet solved. The most well-known particular case
for which it is solved is that of a map of a closed interval $I=[a, b]$, $a<b$
into itself in which case there must exist a fixed point in $I$. In fact, in
this case a more general result can be proven of which the existence of a fixed
point in an invariant interval is a consequence.

Namely, instead of considering a map $f:I\to I$ consider a map $f:I\to \R$ such
that either (a) $f(a)\ge a$ and $f(b)\le b$, or (b) $f(a)\le a$ and $f(b)\ge
b$. Then still there must exist a fixed point in $I$ which is an easy corollary
of the Intermediate Value Theorem applied to the function $f(x)-x$. Observe
that in this case $I$ need not be invariant under $f$. Observe also that
without the assumptions on the endpoints, the conclusion on the existence of a
fixed point inside $I$ cannot be made because, e.g., a shift map on $I$ does
not have fixed points at all. The conditions (a) and (b) above can be thought
of as boundary conditions imposing restrictions on where $f$ maps the boundary
points of $I$ in $\reals$.

Our main aim in this paper is to consider some other cases for which
Problem~\ref{fpt} is solved in affirmative (i.e., the existence of a fixed
point in an invariant continuum is established) and replace for them the
invariantness of the continuum by boundary conditions in the spirit of the
above ``interval version'' of Problem~\ref{fpt}. More precisely, instead of
assuming that $f(X)\subset X$ we will make some assumptions on the way that $f$
maps points of $\ol{f(X)\sm X}\cap X$.

First though let us discuss particular cases for which Problem~\ref{fpt} is
solved. They can be divided into two categories: either $X$ has specific
properties, or $f$ has specific properties. The most direct extension of the
``interval particular case'' of Problem~\ref{fpt} is, perhaps, the following
well known theorem (see for example\cite{nadl92}).

\begin{thm}\label{dendr}
If $f:D\to D$ is a continuous map of a dendrite into itself then it has a fixed
point.
\end{thm}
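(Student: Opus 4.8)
The plan is to prove Theorem~\ref{dendr} by squeezing $D$ between finite trees. I will use the standard structure of a dendrite $D$: any two points are joined by a unique arc $[x,y]$; every subcontinuum of $D$ is itself a dendrite; a finite union of arcs in $D$ is a finite tree; and for every subcontinuum $A\subset D$ there is a canonical first-point retraction $r_A\colon D\to A$, continuous, characterized by $r_A(x)\in[x,a]$ for all $a\in A$. Since the fixed point property is a topological invariant, I would first remetrize $D$ by a convex metric (every Peano continuum admits one); then the unique arc $[x,y]$ is a geodesic and $d(x,z)\le d(x,y)$ whenever $z\in[x,y]$.

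Next I would fix a countable dense set $\{d_1,d_2,\dots\}\subset D$ and set $A_n=\bigcup_{i,j\le n}[d_i,d_j]$, a finite subtree, with $A_1\subset A_2\subset\cdots$ and $\ol{\bigcup_n A_n}=D$. Writing $r_n=r_{A_n}$, the density of $\{d_i\}$ together with compactness of $D$ gives numbers $\varepsilon_n\to 0$ with $A_n$ being $\varepsilon_n$-dense; since $r_n(x)$ lies on the geodesic from $x$ to some point of $A_n$ within $\varepsilon_n$ of $x$, convexity of the metric yields $\sup_{x\in D}d(x,r_n(x))\le\varepsilon_n$, i.e.\ $r_n\to\mathrm{id}_D$ uniformly. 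Now $f_n:=r_n\circ(f|_{A_n})$ is a continuous self-map of the finite tree $A_n$; a finite tree is a contractible compact polyhedron, hence an absolute retract, so it has the fixed point property, and I obtain $p_n\in A_n$ with $r_n(f(p_n))=p_n$. Passing to a subsequence, $p_n\to p\in D$; by continuity $f(p_n)\to f(p)$, while $d(f(p_n),p_n)=d(f(p_n),r_n(f(p_n)))\le\sup_x d(x,r_n(x))\to 0$ forces $f(p_n)\to p$ as well, so $f(p)=p$, completing the argument.

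Most of this is bookkeeping with standard dendrite facts (continuity of $r_A$, that finite arc-unions are trees, the fixed point property of finite trees, existence of a convex metric), where I expect no real difficulty. The one step I would handle carefully, and the genuine technical point, is the uniform convergence $r_n\to\mathrm{id}_D$: for an arbitrary metric a point on $[x,a]$ need not be metrically close to $x$, which is exactly why the reduction to a convex metric at the outset does essential work. (An alternative route — take by Zorn's lemma a subcontinuum $M$ minimal with respect to $f(M)\subseteq M$, observe $f(M)=M$, and seek a contradiction when $M$ is nondegenerate — I would not pursue, since ruling out a fixed-point-free surjection of a nondegenerate dendrite seems about as hard as the original statement.)
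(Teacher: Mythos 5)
Your argument is correct and complete. Note that the paper does not actually prove Theorem~\ref{dendr}: it is quoted as a well-known fact with a citation to Nadler's \emph{Continuum Theory}, so there is no in-paper proof to compare against. Your route --- exhaust $D$ by finite subtrees $A_n$ spanned by a countable dense set, use the first-point retractions $r_n$, apply the fixed point property of finite trees to $r_n\circ f|_{A_n}$, and pass to the limit --- is one of the standard proofs, and every step checks out: the $A_n$ are indeed finite trees (each is $\bigcup_{j\le n}[d_1,d_j]$ by the tree property of arcs in a dendrite), the first-point retraction is continuous, and the estimate $d(x,r_n(x))\le\varepsilon_n$ follows exactly as you say once the metric is convex, since $r_n(x)\in[x,a]$ for the nearest $a\in A_n$ and the arc is a geodesic. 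You are also right to flag the uniform convergence $r_n\to\mathrm{id}_D$ as the one genuine technical point; the convex metric (Bing--Moise) is the cleanest way to get it, though one can also prove it directly from local connectedness by a compactness/contradiction argument, avoiding remetrization. For comparison, the proof most commonly referenced (and presumably the one intended by the citation) is shorter but less elementary in a different direction: a dendrite is an absolute retract, and compact ARs have the fixed point property by Borsuk's theorem; there is also Ward's order-theoretic argument using the cut-point partial order. Your approximation argument buys self-containedness modulo Brouwer-type input for finite trees, at the cost of the convex-metric machinery.
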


Here $f$ is just a continuous map but the continuum $D$ is very nice.
Theorem~\ref{dendr} can be generalized to the case when $f$ maps $D$ into a
dendrite $X\supset D$ and certain conditions onto the behavior of the points of
the boundary of $D$ in $X$ are fulfilled. This presents a ``non-invariant''
version of Problem~\ref{fpt} for dendrites and can be done in the spirit of the
interval case described above. Moreover, with some additional conditions it has
consequences related to the number of periodic points of $f$. The details and
exact statements can be found to Section 2 devoted to the dendrites.

Another direction is to consider specific maps of the plane on arbitrary
non-separating continua. Let us go over known results here. Cartwright and
Littlewood \cite{cartlitt51} have solved Problem~\ref{fpt} in affirmative for
orientation preserving homeomorphisms of the plane. This result was generalized
to all homeomorphisms by Bell \cite{bell78}. The existence of fixed points for
orientation preserving homeomorphisms of the \emph{entire plane} under various
conditions was considered in
\cite{brou12a,brow84a,fath87,fran92,guil94,fmot07}, and of a point of period
two for orientation reversing homeomorphisms in \cite{boni04}.

The result by Cartwright and Littlewood  deals with the case when $X$ is an
invariant continuum. In parallel with the interval case, we want to extend this
to a larger class of maps of the plane (i.e., not necessarily one-to-one) such
that certain ``boundary'' conditions are satisfied. Our tools are mainly based
on \cite{fmot07} and apply to \emph{positively oriented maps} which
generalize the notion of an orientation preserving homeomorphism (see the
precise definitions in Section~\ref{prel}). Our main topological results are
Theorems~\ref{fixpt} and ~\ref{locrot}. The precise conditions in them are
somewhat technical - after all, we need to describe ``boundary conditions'' of
maps of arbitrary non-separating continua. However it turns out that these
conditions are satisfied by holomorphic maps (in particular, polynomials),
allowing us to obtain a few corollaries in this case, essentially all dealing
with the existence of periodic points in certain parts of the Julia set of a
polynomial and degeneracy of certain impressions.

\section{Preliminaries}\label{prel}

A map $f:X\to Y$ is \emph{perfect} if for each compact set $K\subset Y$,
$f^{-1}(K)$ is also compact. \emph{All maps considered in this paper are
prefect.} Given a continuum $K\subset \C$, denote by $U_\iy(K)$ the component
of $\C\sm K$ containing infinity, and by $T(K)$ the \emph{topological hull of
$K$}, i.e. the set $\C\sm U_\iy(K)$. By $\uc$ we denote the unit circle which
we identify with $\real\mod\zed$.

In this section we will introduce a new class of maps which are the proper
generalization of an orientation preserving homeomorphism. For completeness we
will recall some related results from \cite{fmot07} where these
maps were first introduced.

\begin{defn}[Degree of a map]
Let $f:U \to \C$ be a map from a simply connected domain $U$ into the plane.
Let $S$ be a positively oriented simple closed curve in $U$, and $w\nin f(S)$ be a
point. Define $f_w:S\to\ucirc$ by \[ f_p(x)=\frac{f(x)-w}{|f(x)-w|}.\]
Then $f_w$ has a well-defined {\em degree} (also known as the {\em winding
number} of $f|_S$ about $w$), denoted $\degree(f_w)=\win(f,S,w)$.
\end{defn}

\begin{defn}
A map $f:U \to \C$ from a simply connected domain $U$ is
\emph{positively oriented} (respectively, {\em negatively oriented}) provided
for each simple closed curve $S$ in $U$ and each point $w\in f(T(S))\setminus
f(S)$, $\degree(f_w)>0$ ($\degree(f_w)<0$, respectively).
\end{defn}

A holomorphic map $f:\C\to\C$ is a prototype of a positively oriented map.
Hence the results obtained in this paper apply to them. However, in general
positively oriented maps do not have to be light (i.e., a positively oriented
map can map a subcontinuum of $\C$ to a point). Observe also, that for points $w\nin T(f(S))$
we have $\degree(f_w)=0$.

A  map $f:\C\to\C$ is \emph{oriented} provided for each simple closed curve $S$
and each $x\in T(S)$, $f(x)\in T(f(S))$. Every positively or negatively
oriented map is oriented (indeed, otherwise there exists $x\in T(S)$ with
$f(x)\nin T(f(S))$ which implies that $\win(f,S,f(x))=0$, a contradiction). A
map $f:\C\to \C$ is \emph{confluent} provided for each subcontinuum
$K\subset\C$ and every component $C$ of $f^{-1}(K)$, $f(C)=K$. It is well known
that both open and monotone maps (and hence compositions of such maps) of
continua are confluent. It follows  from a result of Lelek and Read
\cite{leleread74} that each confluent mapping of the plane is the composition
of a monotone map and a light open map. Theorem~\ref{orient} is obtained in
\cite{fmot07}.

\begin{thm} \label{orient}
Suppose that $f:\C\to\C$ is a surjective map. Then the
following are equivalent:

\begin{enumerate}

\item\label{pnorient} $f$ is either positively or negatively oriented;

\item \label{iorient}$f$ is
oriented;

\item\label{conf} $f$ is confluent.

\end{enumerate}

Moreover, if $f$ satisfies these properties then for any non-separating
continuum $X$ we have $f(\bd(X))\supset \bd(f(X))$.

\end{thm}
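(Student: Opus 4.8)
The plan is to establish the cycle of implications $(\ref{pnorient})\Rightarrow(\ref{iorient})\Rightarrow(\ref{conf})\Rightarrow(\ref{pnorient})$ and then to read off the last inclusion from confluence. The implication $(\ref{pnorient})\Rightarrow(\ref{iorient})$ is the short winding-number remark already made above: if $f$ is positively (or negatively) oriented and $x\in T(S)$ with $f(x)\nin T(f(S))$, then $f(x)\nin f(S)$, so $f(x)\in f(T(S))\sm f(S)$ and $\deg(f_{f(x)})\ne 0$, while $f(x)\nin T(f(S))$ forces $\deg(f_{f(x)})=0$, a contradiction; hence $f$ is oriented.

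For $(\ref{iorient})\Rightarrow(\ref{conf})$ I would argue by contradiction. Suppose $f$ is oriented but not confluent, so there are a continuum $K$, a component $C$ of $f^{-1}(K)$, and a point $y\in K\sm f(C)$. Since $f$ is perfect, $f^{-1}(K)$ is compact, $C$ is a compact component of it, and $f^{-1}(y)$ is a compact set disjoint from $C$. By Zoretti's theorem, $C$ can be enclosed in a simple closed curve $S$ with $C\subset\Int T(S)$, with $T(S)$ inside a prescribed neighborhood of $C$, and with $S\cap f^{-1}(K)=\0$; choosing the neighborhood small I may also require $f^{-1}(y)\cap T(S)=\0$. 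Then $f(S)\cap K=\0$, so the continuum $K$ lies in a single complementary component of $f(S)$; as orientedness gives $\0\ne f(C)\subset f(T(S))\subset T(f(S))$, the case $K\subset U_\iy(f(S))$ is excluded, so $K\subset\Int T(f(S))$. The crux is then the claim that an oriented perfect surjection satisfies $f(T(S))=T(f(S))$ for every simple closed curve $S$ — equivalently, that $f(T(S))$ is non-separating. Granting it, $K\subset\Int T(f(S))\subset f(T(S))$, so $y$ has a preimage in $T(S)\cap f^{-1}(K)$, contradicting $f^{-1}(y)\cap T(S)=\0$; hence $f$ is confluent. This non-separating claim is the step I expect to be the main obstacle: it has essentially the strength of confluence, so it must be extracted from the orientedness hypothesis by a genuinely planar argument (or the cycle must be reorganized to avoid circularity).

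For $(\ref{conf})\Rightarrow(\ref{pnorient})$ I would use the Lelek--Read decomposition recalled above: since $f$ is confluent it factors as $f=g\circ h$ with $h$ a monotone and $g$ a light open perfect surjection of $\C$. By Sto\"ilow's theorem a light open map of the plane is, up to homeomorphisms of domain and range, holomorphic, so $g$ has a strictly positive local degree at each point and is in particular positively oriented; and a monotone perfect surjection $h$ satisfies $\win(h,S,w)=1$ for $w\in\Int T(h(S))\sm h(S)$ and $\win(h,S,w)=0$ for $w\nin T(h(S))$. Now fix a simple closed curve $S$ and $w\in f(T(S))\sm f(S)$. Writing $\win(f,S,w)=\deg\bigl(g_w\circ(h|_S)\bigr)$ and using the standard additivity of the degree of a composition, $\win(f,S,w)$ equals a finite sum, over the components $\Ga$ of $g^{-1}(w)$, of the products (winding number of $h|_S$ about $\Ga$) $\cdot$ (local degree of $g$ at $\Ga$ over $w$). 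Each local-degree factor is positive since $g$ is positively oriented; each winding factor lies in $\{0,1\}$ since $h$ is monotone; and since $w\in f(T(S))\subset g\bigl(T(h(S))\bigr)$ while $w\nin f(S)=g(h(S))$, at least one $\Ga$ lies in $\Int T(h(S))$ and contributes a strictly positive term. Hence $\win(f,S,w)>0$ and $f$ is positively oriented. The secondary technical difficulty here is making the degree-of-a-composition bookkeeping precise when $h(S)$ is not a Jordan curve, and proving that monotone perfect surjections contribute winding numbers only $0$ or $1$.

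Finally, assuming the equivalent conditions (so in particular $f$ is confluent), I would prove $f(\bd(X))\supset\bd(f(X))$ directly, for any continuum $X$. Since $f$ is perfect, $f(X)$ is closed, so any $y\in\bd(f(X))$ lies in $f(X)$; fix $x_0\in X$ with $f(x_0)=y$ and choose $z_n\nin f(X)$ with $z_n\to y$. Let $K_n$ be the straight segment from $y$ to $z_n$, so $\dia(K_n)\to 0$, and let $C_n$ be the component of $f^{-1}(K_n)$ containing $x_0$. By confluence $f(C_n)=K_n$, so $C_n$ contains a point mapped to $z_n\nin f(X)$, hence a point outside $X$; as $x_0\in C_n\cap X$, the connected set $C_n$ meets $\bd(X)$, say at $x_n$. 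Then $f(x_n)\in K_n$, so $f(x_n)\to y$; passing to a subsequence, $x_n\to x\in\bd(X)$ (a compact set), whence $f(x)=y$ and $y\in f(\bd(X))$, as desired.
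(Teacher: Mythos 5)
The paper does not prove Theorem~\ref{orient} at all: it quotes it from \cite{fmot07} (``Theorem~\ref{orient} is obtained in \cite{fmot07}''). There is therefore no in-paper proof to compare against, beyond the one-line parenthetical remark just before the theorem, which establishes $(\ref{pnorient})\Rightarrow(\ref{iorient})$ by exactly the winding-number contradiction you give. Your proof of the moreover-clause from confluence is complete and correct as written, and your sketch of $(\ref{conf})\Rightarrow(\ref{pnorient})$ via the Lelek--Read factorisation $f=g\circ h$ and Sto\"ilow's theorem follows the route one would expect (one small slip: Sto\"ilow gives that the light open factor $g$ is \emph{either} positively \emph{or} negatively oriented, not automatically positively; this is precisely why item $(\ref{pnorient})$ is a disjunction, and the rest of your bookkeeping should be run separately in each case).

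The genuine gap is the one you flag yourself, in $(\ref{iorient})\Rightarrow(\ref{conf})$. After enclosing the component $C$ by $S$ with $S\cap f^{-1}(K)=\emptyset$ and $f^{-1}(y)\cap T(S)=\emptyset$, orientedness gives only $f(T(S))\subset T(f(S))$. To land $y$ in $f(T(S))$ you invoke $\Int T(f(S))\subset f(T(S))$, i.e.\ that $f(T(S))$ is itself a non-separating continuum filling out the hull of $f(S)$. That is not a formal consequence of the definition of ``oriented''; it is exactly the planar input that makes confluence hard, and deducing it from orientedness (for perfect surjections of $\C$) is where the substantive work in \cite{fmot07} lives. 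As arranged, your cycle borrows the conclusion at this step, so it does not close. The fix is not a cosmetic reorganisation of the implications but a separate lemma about oriented perfect surjections of the plane, which your write-up does not supply.
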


Let $X$ be an non-separating plane continuum. A {\em crosscut} of $U=\C\sm X$
is an open arc $A\subset U$ such that $\cl(A)$ is an arc with exactly two
endpoints in $\bd(U)$.  Evidently, a crosscut of $U$ separates $U$ into two
disjoint domains, exactly one unbounded.

Let $S$ be a simple closed curve in $\C$ and suppose $g:S\to\C$ has no fixed
points on $S$. Since $g$ has no fixed points on $S$, the point $z-g(z)$ is
never $0$. Hence the unit vector $v(z)=\frac{g(z)-z}{|g(z)-z|}$ always exists.
Let $z(t)$ be a convenient counterclockwise parameterization of $S$ by $t\in
\uc$ and define the map $\ol{v}=v\circ z:\uc\to \uc$ by

\[\ol{v}(t)=v(z(t))=\frac{g(z(t))-z(t)}{|g(z(t))-z(t)|}.\]

\noindent Then
$\ind{g}{S}$, the \emph{index of $g$ on $S$}, is the {\em degree} of $\ol v$.
The following theorem (see, e.g., \cite{fmot07}) is a major tool in finding
fixed points of continuous maps of the plane.

\begin{thm}\label{basic} Suppose that $S\subset \C$ is a simple closed curve
and $f:T(S)\to \C$ is a continuous map such that $\ind{f}{S}\not=0$.
Then $f$ has a fixed point in $T(S)$.
\end{thm}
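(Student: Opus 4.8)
The statement to prove is a standard index/degree argument: if $S$ is a simple closed curve, $f\colon T(S)\to\C$ is continuous, and $\ind{f}{S}\neq 0$, then $f$ has a fixed point in $T(S)$. I would argue by contradiction. Suppose $f$ has no fixed point in $T(S)$. Then the vector field $z\mapsto f(z)-z$ is nonvanishing on the whole of $T(S)$, so the unit vector field $v(z)=\frac{f(z)-z}{|f(z)-z|}$ is a continuous map $v\colon T(S)\to\uc$. Since $T(S)$ is the topological hull of a simple closed curve, it is homeomorphic to a closed disk, in particular simply connected; so $v$ extends (or rather, $v$ restricted to $S=\bd T(S)$) is null-homotopic in $\uc$.

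The key step is the homotopy-invariance of degree. Pick a homeomorphism $h\colon \overline{\disk}\to T(S)$ taking $\uc=\bd\disk$ onto $S$; precomposing, $v\circ h\colon\overline{\disk}\to\uc$ is continuous on the whole disk. The restriction $(v\circ h)|_{\uc}$ is then homotopic to a constant map via the radial homotopy $(t,s)\mapsto v(h(st))$, $s\in[0,1]$. Degree is a homotopy invariant of maps $\uc\to\uc$, so $\dg\big((v\circ h)|_{\uc}\big)=0$. On the other hand, using $z(t)=h(e^{2\pi i t})$ (or any other convenient counterclockwise parameterization — degree does not depend on the parameterization as long as it has degree one onto $S$) as the parameterization in the definition of $\ind{f}{S}$, we get $\ol v(t)=v(z(t))=(v\circ h)(e^{2\pi i t})$, whence $\ind{f}{S}=\dg(\ol v)=0$. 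This contradicts the hypothesis $\ind{f}{S}\neq 0$, and the theorem follows.

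The main obstacle — really the only point requiring care — is making precise that the nonvanishing of $f(z)-z$ on all of $T(S)$ (not merely on $S$) forces the boundary map to be null-homotopic. This rests on two facts: that $T(S)$ is a disk (so that $v$ is a genuine map from a contractible set, and its boundary restriction bounds), and that the index in the statement is computed with a degree-one parameterization of $S$, so it literally equals the degree of the boundary restriction of $v$. Both are routine once spelled out, but I would state explicitly that we may take the parameterization $z(t)$ to trace $S$ once counterclockwise, matching the orientation implicit in "the degree of $\ol v$," so that no sign ambiguity enters. Everything else — continuity of $v$, homotopy invariance of the degree of self-maps of the circle — is standard.
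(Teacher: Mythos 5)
Your argument is correct and is the standard one: the paper itself does not prove Theorem~\ref{basic} but quotes it from \cite{fmot07}, where essentially this same degree-theoretic argument appears (no fixed point implies $v(z)=\frac{f(z)-z}{|f(z)-z|}$ is continuous on all of $T(S)$, which by Schoenflies is a disk, so the boundary restriction is null-homotopic and $\ind{f}{S}=0$). Your remarks on orientation of the parameterization are fine, and in any case a reversal would only change the sign of the degree, which cannot rescue a nonzero index.
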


Theorem~\ref{basic} applies to Problem~\ref{fpt} as follows. Given a non-separating continuum
$X\subset \C$ one constructs a simple closed curve $S$ approximating $X$ so that
the index of $f$ on $S$ can be computed. If it is not equal to zero, it implies the existence
of a fixed point in $T(S)$, and if $S$ is tight enough, in $X$. Hence our efforts should be
aimed at constructing $S$ and computing $\ind{f}{S}$. One way of doing so is to
use Bell's notion of variation which we will now introduce.

Suppose that $X$ is a non-separating plane continuum and $S$ is a simple closed
curve such that $X\subset T(S)$ and $S\cap X$ consists of more than one point.
Then we will call $S$ a \emph{bumping simple closed curve of $X$}. Any subarc
of $S$, both of whose endpoints are in $X$, is called a \emph{bumping arc of
$X$} or a \emph{link of $S$}. Note that any bumping arc $A$ of $X$ can be
extended to a bumping simple closed curve $S$ of $X$. Hence, every bumping arc
has a natural order $<$ inherited from the positive circular order of a bumping
simple closed curve $S$ containing $A$. If $a<b$ are the endpoints of $A$, then
we will often write $A=[a,b]$.  Also, by the \emph{shadow $Sh(A)$ of $A$}, we
mean the union of all bounded components of $\C\sm (X\cup A)$.

The {\em standard junction} $J_0$ is the union of the three rays
$R_i=\{z\in\C\mid z=re^{i\pi/2}, r\in[0,\infty)\}$, $R_+=\{z\in\C\mid z=re^{0},
r\in[0,\infty)\}$, $R_-=\{z\in\C\mid z=re^{i\pi}, r\in[0,\infty)\}$, having the
origin $0$ in common.  By $U$ we denote the lower half-plane $\{z\in\C\mid
z=x+iy, y<0\}$. A {\em junction} $J_v$ is the image of $J_0$ under any
orientation-preserving homeomorphism $h:\C\to\C$ where $v=h(0)$.  We will often
suppress $h$ and refer to $h(R_i)$ as $R_i$, and similarly for the remaining
rays and the region $h(U)$.

\begin{defn}[Variation on an arc]
Let $f:\C\to\C$ be a map, $X$ be a non-separating plane continuum, $A=[a,b]$ be
a bumping subarc of $X$ with $a<b$, $\{f(a),f(b)\}\subset X$ and $f(A)\cap
A=\0$.  We define the {\em variation of $f$ on $A$ with respect to $X$},
denoted $\var{f}{A}$, by the following algorithm:

\begin{enumerate}

\item Choose an \op\ homeomorphism $h$ of $\C$ such that
$h(0)=v\in A$ and $X\subset h(U)\cup\{v\}$.

\item \label{crossings} {\em Crossings:} Consider the set
$K=[a,b]\cap f^{-1}(J_v)$. Move along $A$ from $a$ to $b$. Each time a point of
$[a,b]\cap f^{-1}(R_+)$ is followed immediately by a point of $[a,b]\cap
f^{-1}(R_i)$ in $K$, count $+1$. Each time a point of $[a,b]\cap
f^{-1}(R_i)$ is followed immediately by a point of $[a,b]\cap
f^{-1}(R_+)$ in $K$, count $-1$. Count no other crossings.

\item The sum of the crossings found above is the
variation, denoted $\var{f}{A}$.

\end{enumerate}

\end{defn}

It is shown in \cite{fmot07} that the variation does not depend on the choice
of a junction satisfying the above listed properties. Informally, one can
understand the notion of variation as follows. Since $f(A)\cap A=\0$, we can
always complete $A$ with another arc $B$ (now connecting $b$ to $a$) to a
simple closed curve $S$ disjoint from $J_v$ so that $v\nin f(S)$. Then it is
easy to see that $\win(f, S, v)$ cab be obtained by summing up $\var{f}{A}$ and
the similar count for the arc $B$ (observe that the latter is \emph{not} the
variation of $B$ because to compute that we will need to use another junction
``based'' at a point of $B$).

Any partition $A=\{a_0<a_1<\dots<a_n<a_{n+1}=a_0\}\subset X\cap S$ of a bumping
simple closed curve $S$ of a non-separating continuum $X$ such that for all
$i$, $f(a_i)\in X$ and $f([a_i,a_{i+1}])\cap [a_i,a_{i+1}]=\0$ is called an
\emph{allowable partition of $S$.} We will call the bumping arcs
$[a_i,a_{i+1}]$ \emph{links (of $S$)}. It is shown in \cite{fmot07}
that the variation of a bumping arc is well-defined. Moreover, it follows from
that paper (see Theorem~2.12 and Remark~2.19) that:

\begin{thm}\label{FMOT} Let $S$ be a simple closed curve, $X=T(S)$ and
let $a_0<a_1<\dots<a_n<a_0=a_{n+1}$ be points in $S$ (in the positive circular
order around $S$) such that for each $i$, $f(a_i)\in T(S)$ and, if
$Q_i=[a_i,a_{i+1}]$, then
$f(Q_i)\cap Q_i=\0$. \\
Then
\[\ind{f}{S}=\sum_{i} \var{f}{Q_i}+1.\]
\end{thm}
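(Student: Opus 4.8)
The plan is to prove Theorem~\ref{FMOT} by reducing it to the winding number computation encoded in Theorem~\ref{basic} and the additivity of the ``junction count'' across the links $Q_i$. Since $X = T(S)$, the curve $S$ is itself a bumping simple closed curve of $X$, and the partition $a_0 < a_1 < \dots < a_n < a_0$ is an allowable partition, so each $\var{f}{Q_i}$ is well-defined. The index $\ind{f}{S}$ is by definition the degree of the normalized displacement map $\ol v(t) = (f(z(t)) - z(t))/|f(z(t)) - z(t)|$ on $S$; note that $f$ has no fixed point on $S$ since $f(a_i) \in X = T(S)$ while no $a_i$ is fixed (if some $a_i$ were fixed we would have $f(Q_{i-1}) \cap Q_{i-1} \ni a_i$, contradicting the hypothesis), and more generally $f(Q_i) \cap Q_i = \emptyset$ forbids fixed points on each link. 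So the degree of $\ol v$ is a well-defined integer.

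The core idea is to relate this displacement degree to a winding number about a single point, using a junction. First I would, for each link $Q_i = [a_i, a_{i+1}]$, choose via the definition of variation an \op\ homeomorphism $h_i$ with $h_i(0) = v_i \in Q_i$ and $X \subset h_i(U) \cup \{v_i\}$, giving a junction $J_{v_i}$. The key observation from \cite{fmot07} is that the variation $\var{f}{Q_i}$ counts, with sign, how the displacement vector $f(z)-z$ rotates as $z$ traverses $Q_i$ — more precisely, it records the net number of times $f(z)$ crosses the junction $J_{v_i}$ in the appropriate rotational sense relative to the base point $v_i$, and because $v_i$ lies on $Q_i$ itself this crossing count is exactly the contribution of $Q_i$ to a winding number of the map $z \mapsto f(z)$ around a point just ``below'' $Q_i$ (in the region $h_i(U)$), which in turn is the displacement contribution along $Q_i$. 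Summing over all $i$ gives the total rotation of $f(z)-z$ along all of $S$, which is $\ind{f}{S}$, except that passing from ``winding of $f(z)$ about a moving base point on the curve'' to ``winding of $f(z)-z$ about $0$'' introduces a correction term equal to $1$ — this is the standard Poincaré–Bohl / Bell-type bookkeeping: the displacement $f(z)-z$ makes one extra positive loop coming from the $-z$ term as $z$ itself goes once around $S$. This is precisely the ``$+1$'' in the formula, and it is the same mechanism by which $\ind{g}{S} = 1$ whenever $g(z) = z + c$ is a translation (or more generally whenever $f(S)$ is entirely ``inside'').

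I expect the main obstacle to be making the additivity rigorous: the quantity $\var{f}{Q_i}$ is defined with respect to a junction based at a point $v_i \in Q_i$ that varies from link to link, so one cannot naively add the crossing counts as if they were computed against a common reference. The resolution — which is exactly the content of Theorem~2.12 and Remark~2.19 of \cite{fmot07} that we are allowed to invoke — is that although $\var{f}{B}$ for the ``complementary arc'' $B$ would require its own junction, the index $\ind{f}{S}$ itself is junction-independent, and the local crossing counts glue correctly because at each partition point $a_i$ the displacement vector $f(a_i) - a_i$ points from a boundary point of $X$ into the exterior, so the junctions can be chosen compatibly near the $a_i$ and the discrete crossing bookkeeping telescopes. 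Concretely, I would cite the decomposition of $\win(f, S, v)$ into $\var{f}{A}$ plus the complementary count described in the paragraph following the variation definition, apply it iteratively across the partition to absorb all links, and then identify $\win(f, S, \cdot)$-type data with $\ind{f}{S}$ via the translation-correction argument above to extract the $+1$. Thus the proof is essentially an assembly of: (i) well-definedness of each $\var{f}{Q_i}$ (from the allowable partition hypothesis), (ii) the junction-crossing interpretation of variation from \cite{fmot07}, (iii) additivity of crossing counts over the partition, and (iv) the constant $+1$ correction relating net displacement rotation to the junction winding data.
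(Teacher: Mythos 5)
The paper does not actually prove this statement: Theorem~\ref{FMOT} is imported from \cite{fmot07} (the text explicitly points to Theorem~2.12 and Remark~2.19 of that paper), so there is no in-paper argument to compare yours against. Your preliminary observations are correct and worth having: the verification that $f$ is fixed point free on $S$ (a fixed $a_i$ would put $a_i$ into $f(Q_{i-1})\cap Q_{i-1}$, and a fixed interior point of a link would violate $f(Q_i)\cap Q_i=\0$), hence $\ind{f}{S}$ is a well-defined integer; and the identification of $a_0<\dots<a_n$ as an allowable partition so that each $\var{f}{Q_i}$ is well defined and junction-independent.

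As a standalone proof, however, your write-up has a genuine gap, and it sits exactly where you say it does. The entire content of the theorem is the additivity step: each $\var{f}{Q_i}$ is a discrete crossing count taken relative to its own junction $J_{v_i}$ based at a point $v_i\in Q_i$, whereas $\ind{f}{S}$ is the degree of the globally defined displacement map $z\mapsto (f(z)-z)/|f(z)-z|$. To connect them one must show that the fractional change of argument of $f(z)-z$ along each link equals $\var{f}{Q_i}$ plus a boundary term depending only on the directions of $f(a_i)-a_i$ and $f(a_{i+1})-a_{i+1}$ (here the hypothesis $f(a_i)\in T(S)$ is used to confine those directions), and that these boundary terms telescope around $S$ to the constant $+1$. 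You assert this telescoping and then defer it to Theorem~2.12 of \cite{fmot07}; that is precisely the nontrivial part, so no independent proof has been given. Your heuristic for the $+1$ (one extra positive loop contributed by the reference direction $-z$ as $z$ traverses $S$) is the right intuition and passes the constant-map sanity check, but it is not an argument. In short, your outline faithfully reproduces the structure of the proof in the cited source and is at the same level of rigor as the paper's own treatment (a citation); if the goal was a self-contained proof, the gluing lemma for the per-link argument increments is the missing piece.
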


Observe that if the points $a_i, i=1, \dots, n$ satisfying the properties of
Theorem~\ref{FMOT} can be chosen then there are no fixed points of $f$ in $S$
and $\ind{f}{S}$ is well-defined. Theorem~\ref{FMOT} shows that if we define
$\var{f}{S}=\sum_i\var{f}{Q_i}$, then $\var{f}{S}$ is well defined and
independent of the choice of the allowable partition of $S$ and of the choice
of the junctions used to compute $\var{f}{Q_i}$.

\begin{lem}\label{closed}
Let $f:\C\to\C$ be a map, $X$ be a non-separating continuum and $C=[a,b]$ be a
bumping arc of $X$ with $a<b$.
Let $v\in [a,b]$ be a point and let $J_v$ be a junction such that $J_v\cap
(X\cup C)=\{v\}$. Suppose that $J_v\cap f(X)=\0$. Then there exists an arc $I$
such that $S=I\cup C$ is a bumping simple closed curve of $X$ and
$f(I)\cap J_v=\0$.
\end{lem}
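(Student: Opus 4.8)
The plan is to build the arc $I$ directly, tracking carefully how it can be routed through $\C\sm X$ while avoiding the junction $J_v$. Since $C=[a,b]$ is a bumping arc of $X$, its endpoints $a$ and $b$ lie on $\bd(U_\infty(X))$ (recall $X$ is non-separating, so $\C\sm X=U_\infty(X)$ is connected and simply connected on the sphere). To form a bumping simple closed curve we must connect $b$ back to $a$ through $\C\sm X$ by an arc $I$ meeting $X$ only in $\{a,b\}$; the extra requirement is that $I$ avoid $J_v$ entirely. The key hypothesis to exploit is $J_v\cap f(X)=\emptyset$ together with $J_v\cap(X\cup C)=\{v\}$, which says $J_v$ touches $X\cup C$ only at the single point $v$ interior to $C$.

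First I would pass to the sphere $\RC$ and consider $W=\RC\sm(X\cup C\cup J_v)$. Note $J_v$ is a junction, i.e. the homeomorphic image of the standard junction $J_0$, so on the sphere it is a ``triod'' with one endpoint at $v$ and its three other ends running off to $\infty$ (under the homeomorphism $h$ they limit on $h(\infty)$); thus $X\cup C\cup J_v$ is a locally connected continuum-like set dividing the sphere into finitely many complementary domains, each of whose boundary is a subset of $X\cup C\cup J_v$. Because $J_v$ meets $X\cup C$ only at $v$, adding $J_v$ to $X\cup C$ cuts the complement $\C\sm(X\cup C)$ along the three rays $R_i,R_+,R_-$ emanating from $v$. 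The points $a$ and $b$, being the endpoints of $C$ other than $v$, lie on the boundary of a single complementary domain $V$ of $X\cup C\cup J_v$: indeed $C\sm\{v\}$ is connected and disjoint from $J_v$, so $a$ and $b$ lie in the closure of the same side, and one can choose the component $V$ of $W$ accessible from the same local side along all of $C\sm\{v\}$.

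Next, using that $V$ is an open connected subset of the sphere with $a,b\in\ol V$ and that $a,b$ are accessible from $V$ (they are endpoints of the arc $C$ which, minus $v$, lies on $\bd V$ and is locally an accessible boundary arc), I would pick a crosscut-type arc of $V$ from $a$ to $b$: an arc $I$ with $I\sm\{a,b\}\subset V$ and endpoints $a,b$. By construction $I\cap J_v=\emptyset$ (since $I\sm\{a,b\}\subset V\subset\RC\sm J_v$ and $a,b\notin J_v$ because $J_v\cap C=\{v\}$ with $v\ne a,b$), and $I\cap X\subset\{a,b\}$ (since $V\cap X=\emptyset$). After a small adjustment near the endpoints to make $S=I\cup C$ an embedded simple closed curve rather than merely a closed curve, $S$ is a bumping simple closed curve of $X$. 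Finally $f(I)\cap J_v=\emptyset$: we have $f(I)\subset f(X)\cup f(I\sm\{a,b\})$... no—rather, $I\cap X\subset\{a,b\}$ gives $f(a),f(b)\in f(X)$ which misses $J_v$ by hypothesis, and for the interior of $I$ we need the arc to stay in a region whose $f$-image avoids $J_v$; this is where the real work lies.

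The main obstacle is precisely this last point: ensuring $f(I)\cap J_v=\emptyset$, not just $I\cap J_v=\emptyset$. Avoiding $J_v$ in the domain is pure plane topology, but its $f$-image could still hit $J_v$. The hypothesis $J_v\cap f(X)=\emptyset$ only controls $f$ on $X$; however, $J_v$ is a closed set and $f$ is continuous, so $f^{-1}(J_v)$ is closed and disjoint from $X$ except where forced. The resolution I would pursue: since $f^{-1}(J_v)$ is a closed set disjoint from $X$ (because $J_v\cap f(X)=\emptyset$), and $I\sm\{a,b\}$ can be chosen arbitrarily close to $X$ by taking $V$-arcs hugging $C$ and the boundary of $X$, continuity and compactness let me choose $I$ in a small enough neighborhood $N$ of $X\cup C$ with $f(N\sm C)\cap J_v$ controllable—more precisely, by a standard argument one first shrinks to a neighborhood on which the relevant crossing structure of $f^{-1}(J_v)$ is stable, then threads $I$ through the ``gaps.'' This kind of neighborhood-and-threading argument for variation-type constructions is exactly the technique of \cite{fmot07}, so I expect the proof to invoke that machinery; the delicate part is organizing the choice so that $I$ simultaneously (i) reconnects $b$ to $a$, (ii) misses $J_v$, (iii) misses $X$ off its endpoints, and (iv) has $f$-image missing $J_v$, and verifying (iv) is where most of the care goes.
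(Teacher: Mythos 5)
Your final paragraph contains the idea that the paper actually uses, and it is the whole proof: $f(X)$ is compact, $J_v$ is closed, and $f(X)\cap J_v=\0$, so $d(f(X),J_v)>0$; by (uniform) continuity of $f$ on a compact neighborhood of $X$ there is $\da>0$ with $f(B(X,\da))\cap J_v=\0$, and one then takes $I$ to be any arc from $b$ to $a$ in $B(X,\da)\sm X$ (plus its endpoints) chosen, in the standard way, so that $I\cup C$ is a simple closed curve with $X\subset T(I\cup C)$. That is all the paper does. Two remarks on where your write-up drifts from this. First, your closing move --- ``shrink to a neighborhood on which the crossing structure of $f^{-1}(J_v)$ is stable, then thread $I$ through the gaps'' --- is a red herring: since $f^{-1}(J_v)$ is closed and disjoint from the compact set $X$, it is uniformly bounded away from $X$, so near $X$ there is nothing to thread through; if threading were needed the lemma would be in trouble, because an arc forced through $f^{-1}(J_v)$ would violate the conclusion. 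Second, your first two paragraphs labor to make $I$ itself disjoint from $J_v$ by routing it through a complementary domain of $X\cup C\cup J_v$; the statement does not require $I\cap J_v=\0$, only $f(I)\cap J_v=\0$, so that construction is not needed (and the applications, e.g.\ Corollary~\ref{posvar}, only use $v\nin f(S)$). So the proposal is essentially correct and lands on the paper's argument, but you should recognize that the step you flag as ``where the real work lies'' is immediate from compactness and continuity, and you should excise the unnecessary domain-theoretic construction of $I$.
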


\begin{proof}
Since $f(X)\cap J_v=\0$, it is clear that there exists an arc $I$ with
endpoints $a$ and $b$ near X such that $I\cup C$ is a simple closed curve,
$X\subset T(I\cup C)$ and $f(I)\cap J_v=\0$. This completes the proof.
\end{proof}

The next corollary gives a sufficient condition for the non-negativity
of the variation of an arc.

\begin{cor}\label{posvar} Suppose $f:\C\to\C$ is a positively oriented map,
$C=[a,b]$ is a bumping arc of $X$, $f(C)\cap C=\0$ and
$J_v\subset[\C\sm X]\cup\{v\}$ is a junction with $J_v\cap C=\{v\}$. Suppose
that $f(\{a,b\})\subset X$ and there exists a continuum $K\subset X$ such that
$f(K)\cap J_v\subset \{v\}$. Then $\var{f}{C}\geq 0$.
\end{cor}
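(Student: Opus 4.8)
The plan is to reduce the nonnegativity of $\var{f}{C}$ to the positivity (hence nonnegativity) of a winding number, which is guaranteed by the hypothesis that $f$ is positively oriented. First I would use Lemma~\ref{closed}: since $f(K)\cap J_v\subset\{v\}$ and $K$ is a continuum inside $X$, and since we may shrink the junction near $v$ if necessary, the hypotheses of Lemma~\ref{closed} are essentially in place, so there is an arc $I$ with the same endpoints $a,b$ as $C$ such that $S=I\cup C$ is a bumping simple closed curve of $X$ and $f(I)\cap J_v=\0$. The point of choosing $I$ this way is that, when we compute $\win(f,S,v)$ by counting crossings of $f(S)$ with the junction $J_v$, the arc $I$ contributes nothing: all crossings occur on $C$, and by the very definition of variation the signed count of these crossings equals $\var{f}{C}$. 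Thus $\win(f,S,v)=\var{f}{C}$ (one must check $v\nin f(S)$, which holds because $f(C)\cap C=\0$ forces $v\nin f(C)$ — as $v\in C$ — and $f(I)\cap J_v=\0$ gives $v\nin f(I)$).

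The second step is to show that this winding number is $\geq 0$ using that $f$ is positively oriented. Here the key is to produce a simple closed curve $S'$ with $v\in f(T(S'))$ (or to directly invoke the definition) so that $\deg(f_v)>0$; the subtlety is that the definition of positively oriented speaks of $w\in f(T(S))\sm f(S)$, so I need $v$ to lie in $f(T(S))$ rather than outside it. This is where the continuum $K\subset X\subset T(S)$ enters: I would like to arrange that $f(K)$, which meets $J_v$ only possibly at $v$, actually surrounds or reaches $v$ in a way that places $v\in f(T(S))$; alternatively, if $v\nin f(T(S))$ then $\win(f,S,v)=0$ (as noted in the text, points outside $T(f(S))$ have degree zero), and we are done since $0\geq 0$. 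So the argument splits: either $v\notin f(T(S))$, giving $\var{f}{C}=\win(f,S,v)=0$; or $v\in f(T(S))\sm f(S)$, and then positive orientation gives $\var{f}{C}=\win(f,S,v)>0$.

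The step I expect to be the genuine obstacle is the bookkeeping in the first paragraph: verifying carefully that with the arc $I$ supplied by Lemma~\ref{closed} the full winding number $\win(f,S,v)$ computed via junction crossings along all of $S$ reduces exactly to the crossing count along $C$, i.e. that $I$'s contribution is zero and that the junction $J_v$ is admissible for computing variation on $C$ (the conditions $h(0)=v\in C$, $X\subset h(U)\cup\{v\}$, and $f(C)\cap C=\0$ all need to be in force simultaneously). This is the informal remark made after the definition of variation — that $\win(f,S,v)$ is obtained by summing $\var{f}{C}$ and the analogous count on the complementary arc — specialized to the case where the complementary arc's count vanishes because its $f$-image misses $J_v$ entirely. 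Once that identification $\var{f}{C}=\win(f,S,v)$ is nailed down, the dichotomy on whether $v\in f(T(S))$ finishes the proof immediately.
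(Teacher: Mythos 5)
Your overall strategy is the paper's: complete $C$ to a simple closed curve $S=I\cup C$ with $f(I)\cap J_v=\0$, identify $\var{f}{C}$ with $\win(f,S,v)$ via the crossing count (the $I$-part contributing nothing), and conclude nonnegativity from positive orientation; the perturbation of the junction when $f(K)\cap J_v=\{v\}$ is also how the paper handles that case. But there is one concrete gap in your first step. You assert that Lemma~\ref{closed} yields an arc $I$ such that $S=I\cup C$ is a bumping simple closed curve \emph{of $X$} with $f(I)\cap J_v=\0$. The hypothesis of Lemma~\ref{closed} is $J_v\cap f(X)=\0$, and the corollary gives you no control whatsoever over $f(X)\cap J_v$ away from $K$ and $C$: it only guarantees $f(K)\cap J_v\subset\{v\}$ for the subcontinuum $K$. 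If some point of $\bd(X)\sm K$ maps onto $J_v$, then any arc $I$ that stays close to $\bd(X)$ (as it must if $X\subset T(I\cup C)$) can be forced by continuity to have image meeting $J_v$, and your choice of $I$ fails. Indeed, the whole reason the corollary is phrased with a subcontinuum $K$ is that in its application (Lemma~\ref{posvar+}, second case) $f(X)$ genuinely does cross the junction and only $f(K_i)$ avoids it.

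The repair, which is what the paper does, is to apply Lemma~\ref{closed} with $K$ (not $X$) playing the role of the continuum: choose $I$ very close to $K$, so that $f(I)$ is close to $f(K)$ and hence misses $J_v$ (after the junction perturbation in the case $f(K)\cap J_v=\{v\}$). The resulting $S=I\cup C$ is then a bumping simple closed curve around $K$ rather than around $X$; this costs nothing, since the identity $\var{f}{C}=\win(f,S,v)$ only needs $S\cap J_v=\{v\}$, $v\nin f(S)$, and the admissibility of $J_v$ as a junction for $C$ relative to $X$ (which is part of the hypotheses), not that $X\subset T(S)$. With that correction your second paragraph goes through as in the paper: either $v\nin f(T(S))$ and the winding number vanishes, or $v\in f(T(S))\sm f(S)$ and positive orientation makes it strictly positive.
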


\begin{proof}
Observe that since $f(a), f(b)\in X$ then $\var{f}{C}$ is well-defined. Consider a few cases.
Suppose first that $f(K)\cap J_v=\0$. Then, by
Lemma~\ref{closed}, there exists an arc $I$ such that $S=I\cup C$ is a bumping
simple closed curve around $K$ and $f(I)\cap J_v=\0$ (it suffices to choose $I$ very close to $K$).
Then $v\in\C\setminus f(S)$. Hence
$\var{f}{C}=\text{win}(f,S,v)\geq 0$. Suppose next that $f(K)\cap J_v=\{v\}$.
Then we can perturb the junction $J_v$ slightly in a small neighborhood of $v$,
obtaining a new junction $J_d$ such that intersections of $f(C)$ with $J_v$ and
$J_d$ are the same (and, hence, yield the same variation) and $f(K)\cap
J_d=\0$. Now proceed as in the first case.
\end{proof}

\section{Main results}

\subsection{Dendrites}
In this subsection we generalize Theorem~\ref{dendr} to non-invariant
dendrites. We will also show that in certain cases the dendrite must contain
infinitely many periodic cutpoints (recall that if $Y$ is a continuum and $x\in
Y$ then $\val_Y(x)$ is the number of connected components of $Y\sm \{x\}$, and
$x$ is said to be an \emph{endpoint (of $Y$)} if $\val_Y(x)=1$, a
\emph{cutpoint (of $Y$)} if $\val_Y(x)>1$ and a \emph{vertex/branchpoint (of
$Y$)} if $\val_Y(x)>2$). These results have applications in complex dynamics
\cite{bco08}. In this subsection given two points $a, b$ of a dendrite we
denote by $[a, b], (a, b], [a, b), (a, b)$ the unique closed, semi-open and
open arcs connecting $a$ and $b$ in the dendrite. More specifically, unless
otherwise specified, the situation considered in this subsection is as follows:
$D_1\subset D_2$ are dendrites and $f:D_1\to D_2$ is a continuous map. Set
$E=\ol{D_2\sm D_1}\cap D_1$. In other words, $E$ consists of points at which
$D_2$ ``grows'' out of $D_1$. A point $e\in E$ may be an endpoint of $D_1$
(then there is a unique component of $D_2\sm \{e\}$ which meets $D_1$) or a
cutpoint of $D_1$ (then there are several components of $D_2\sm \{e\}$ which
meet $D_1$).

The following theorem is a simple extension of the real result claiming that if
there are points $a<b$ in $\R$ such that $f(a)<a, f(b)>b$ then there exists a
fixed point $c\in (a, b)$ (case (b) described in Introduction).

\begin{prop}\label{fixpt-1}
Suppose that $a, b\in D_1$ are such that $a$ separates $f(a)$ from $b$ and $b$
separates $f(b)$ from $a$. Then there exists a fixed point $c\in (a, b)$ which
is a cutpoint of $D_1$ (and hence $D_2$). In particular if there are two points
$e_1\ne e_2\in E$ such that $f(e_i)$ belongs to a component of $D_2\sm \{e_i\}$
disjoint from $D_1$ then there exists a fixed point $c\in (a, b)$ which
is a cutpoint of $D_1$ (and hence $D_2$).
\end{prop}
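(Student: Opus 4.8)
The plan is to reduce the statement to the one-dimensional intermediate value argument alluded to in the text, using the tree structure of the dendrite. First I would fix the arc $[a,b]\subset D_1$ (which exists and is unique since $D_1$ is a dendrite) and consider, for each point $x\in[a,b]$, the first point $r(x)$ of $[a,b]$ hit by the arc $[x,f(x)]$; equivalently, $r(x)$ is the retraction of $f(x)$ onto $[a,b]$. This retraction is continuous: dendrites are absolute retracts, and the nearest-point retraction onto a subarc is continuous, so $x\mapsto r(f(x))$ is a continuous self-map of the arc $[a,b]\cong[0,1]$. The hypothesis that $a$ separates $f(a)$ from $b$ says precisely that the arc $[a,f(a)]$ meets $[a,b]$ only at $a$, i.e. $r(f(a))=a$; think of this as ``$f(a)$ points off the $a$-end.'' Symmetrically $r(f(b))=b$. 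So $g=r\circ f|_{[a,b]}$ is a continuous map of $[a,b]$ to itself fixing both endpoints. By the ordinary intermediate value theorem (or just Theorem~\ref{dendr} applied to $g$ on the arc $[a,b]$), $g$ has a fixed point; but I want a point $c$ with $f(c)=c$, not merely $r(f(c))=c$, so a little more care is needed.

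For that, I would argue by contradiction: suppose $f$ has no fixed point in $[a,b]$. For each $x\in[a,b]$ with $f(x)\neq x$, let $c(x)$ be the first point of $[a,b]$ on the arc $[x,f(x)]$ as above, and observe that $c(x)\ne x$ would force $f(x)$ to lie in a component of $D_2\setminus\{c(x)\}$ not containing $x$, i.e. $c(x)$ separates $x$ from $f(x)$. Now define on $[a,b]$ the ``direction'' of the displacement: since there are no fixed points, for each $x$ the arc $(x,f(x)]$ leaves $x$ through exactly one of the (at most two, within $[a,b]$) directions toward $a$ or toward $b$, \emph{or} through a direction transverse to $[a,b]$, in which case $x$ is itself the separating point $c(x)$ and $x=c(x)$ — but then $x$ would be a fixed point of $g$ with $f(x)\notin[a,b]$, and $x$ is a cutpoint of $D_1$ separating $f(x)$ from $[a,b]\setminus\{x\}$; in particular $x$ separates $a$ from $f(b)$'s side appropriately, and one checks the endpoint conditions are inherited, so we are done with $c=x$. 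So we may assume every $x\in(a,b)$ has $f(x)$ strictly on the $a$-side or strictly on the $b$-side of $x$ within the arc. Near $a$ the $a$-separation hypothesis forces $f(x)$ on the $a$-side; near $b$, on the $b$-side. The set of $x$ with $f(x)$ on the $a$-side is open, the set with $f(x)$ on the $b$-side is open, they are disjoint and nonempty and cover a connected set $(a,b)$ minus possibly the transverse points — contradiction unless a transverse point (hence a fixed point of $g$, hence the desired cutpoint) exists in between.

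The cleanest way to package all of this is: apply the argument to $g=r\circ f$ to get $c\in[a,b]$ with $g(c)=c$; then either $f(c)=c$ (done, and $c\in(a,b)$ since $f(a)\ne a\ne f(b)$ by the separation hypotheses, and $c$ is a cutpoint because it lies in the interior of an arc of $D_1$), or $f(c)\notin[a,b]$ with $r(f(c))=c$, which means $c$ separates $f(c)$ from every other point of $[a,b]$; in the latter case replace the pair $(a,b)$ by $(a,c)$ or $(c,b)$ — whichever sub-arc still satisfies both boundary conditions (at least one does, since $f(c)$ is separated from both $a$ and $b$ by $c$, so $c$ plays the role of the ``bad endpoint'' at either end) — and induct, or simply observe this directly yields the fixed point. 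The main obstacle I expect is exactly this last point: upgrading a fixed point of the retracted map $g$ to an honest fixed point of $f$, i.e. ruling out (or rather, exploiting) the possibility that $g$ fixes $c$ only because $f(c)$ hangs off $[a,b]$ at $c$. The resolution is that such a $c$ is itself a cutpoint of $D_1$ with $f(c)$ in a component of $D_1\setminus\{c\}$ disjoint from $[a,b]$, which — combined with the original hypotheses on $a$ and $b$ — lets us shrink the interval and recurse; since a dendrite has no infinite properly nested sequence of arcs of definite size without an accumulation arc, compactness closes the argument. The final sentence of the proposition is then immediate: taking $a=e_1$, $b=e_2$, the stated condition that $f(e_i)$ lies in a component of $D_2\setminus\{e_i\}$ disjoint from $D_1$ is exactly ``$e_i$ separates $f(e_i)$ from $e_{3-i}$,'' so the first part applies verbatim.
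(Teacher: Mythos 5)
Your reduction to the retracted map is fine as far as it goes: if $r$ denotes the first-point retraction of $D_2$ onto the arc $[a,b]$, the hypotheses do give $r(f(a))=a$ and $r(f(b))=b$, and your connectedness argument (the sets where $r\circ f$ moves points strictly toward $a$, resp.\ toward $b$, are open, disjoint and nonempty in the connected set $(a,b)$) correctly produces a point $c\in(a,b)$ with $r(f(c))=c$. The genuine gap is exactly where you locate it yourself: upgrading $r(f(c))=c$ to $f(c)=c$. Your proposed fix --- recurse on $[a,c]$ or $[c,b]$ and invoke ``compactness'' --- does not close. Both subarcs satisfy the hypotheses (so there is no progress measure forcing termination), and a nested sequence of such arcs can shrink to a single point $p$ about which you only recover the information you started with: passing to the limit in $r(f(a_n))\le a_n$ and $r(f(b_n))\ge b_n$ yields only $r(f(p))=p$, i.e.\ $f(p)$ may still hang off $[a,b]$ transversally at $p$ without being fixed, and nothing in the argument rules this out (a transfinite continuation has the same defect at limit stages). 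For the same reason, the parenthetical claim in your middle paragraph that a ``transverse'' point $x$ with $f(x)\notin[a,b]$ already finishes the proof is wrong: such an $x$ is fixed for $r\circ f$, not for $f$.

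The paper avoids the retraction altogether and pulls back instead: since $f([a,b])$ is a continuum containing $f(a)$ (separated from $b$ by $a$) and $f(b)$ (separated from $a$ by $b$), it contains the arc $[a,b]$, so there is $a_{-1}\in(a,b)$ with $f(a_{-1})=a$; the pair $(a_{-1},b)$ again satisfies the hypotheses, giving $a_{-2}\in(a_{-1},b)$ with $f(a_{-2})=a_{-1}$, and so on. The resulting sequence is monotone along $[a,b]$, hence converges to some $c$, and $f(c)=\lim f(a_{-n-1})=\lim a_{-n}=c$ precisely because the images of the sequence are terms of the sequence itself --- the feature your retracted orbit lacks. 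To salvage your approach you would need an analogous mechanism guaranteeing that the candidate points have images actually lying on the arc.
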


\begin{proof}
It follows that we can find a sequence of points $a_{-1}, \dots$ in $(a, b)$
such that $f(a_{-n-1})=a_{-n}$ and $a_{-n-1}$ separates $a_{-n}$ from $b$.
Clearly, $\lim_{n\to \iy} a_{-n}=c\in [a, b]$ is a fixed point as desired. If
there are two points $e_1\ne e_2\in E$ such that $f(e_i)$ belongs to a
component of $D_2\sm \{e_i\}$ disjoint from $D_1$ then the above applies to
them.
\end{proof}

The other real case (case (a)) described in Introduction is somewhat more
difficult to generalize. Definition~\ref{bouscr} extends it (i.e. the real case
when $a<b$ are points of $\R$ such that $f(a)>a$ and $f(b)<b$) onto dendrites.

\begin{defn}\label{bouscr}
Suppose that in the above situation the map $f$ is such that for each non-fixed
point $e\in E$, $f(e)$ is contained in a component of $D_2\sm\{e\}$ which meets
$D_1$. Then we say that $f$ has the \emph{boundary scrambling property} or that
it \emph{scrambles the boundary}. Observe that if $D_1$ \emph{is} invariant
then $f$ automatically scrambles the boundary.
\end{defn}

The next definition presents a useful topological version of repelling at a
fixed point.

\begin{defn}\label{wkrep}
Suppose that $a\in D_1$ is a fixed point and that there exists a component $B$
of $D_1\sm \{a\}$ such that arbitrarily close to $a$ in $B$ there exist fixed
cutpoints of $f$ or points $x$ separating $p$ from $f(x)$. Then say that $a$ is
a \emph{weakly repelling fixed point (of $f$ in $B$)}. A periodic point $a$ is
said to be \emph{weakly repelling} if there exists $n$ and a component $B$ of
$D_1\sm \{a\}$ such that $a$ is a weakly repelling fixed point of $f^n$ in $B$.
\end{defn}

It is easy to see that a fixed point $a$ is weakly repelling in $B$ if and only
if either $a$ is a limit of fixed cutpoints of $f$ in $B$, or there exist a
neighborhood $U$ of $a$ in $\ol{B}$ and a point $x\in U\sm \{a\}$ such that $U$
contains no fixed points but $a$ and $x$ separates $p$ from $f(x)$. Indeed, in
the latter case by continuity there exists a point $x_1\in (a, x)$ such that
$f(x_{-1})=x$ and this sequence of preimages can be extended towards $a$ inside
$(a, x)$ so that it converges to $a$ (otherwise it would converge to a fixed
point inside $U$ distinct from $a$, a contradiction). In particular, \emph{if
$a$ is a weakly repelling fixed point of $f$ then $a$ is a weakly repelling
fixed point of $f^n$ for any $n$}. Moreover, since there are only countably
many vertices of $D_2$ and their images under $f$ and its powers, we can choose
$x$ and its backward orbit converging to $a$ so that all its points are
cutpoints of $D_2$ of valence $2$. From now on we assume that to each weakly
repelling fixed point $a$ of $f$ in $B$ which is not a limit point of fixed
cutpoints in $B$ we associate a point $x_a=x\in B$ of valence $2$ in $B$
separating $a$ from $f(x)$ and a neighborhood $U_a=U\subset \ol{B}$ which is
the component of $\ol{B}\sm \{x\}$ containing $a$.

As an important tool we will need the following retraction closely related to
the described above situation.

\begin{defn}\label{retr}
For each $x\in D_2$ there exists a unique arc (possibly a point) $[x,d_x]$ such
that $[x,d_x]\cap D_1=\{d_x\}$. Hence there exists a natural monotone
retraction $r:D_2\to D_1$ defined by $r(x)=d_x$, and the map $g=g_f=r\circ
f:D_1\to D_1$ which is a continuous map of $D_1$ into itself. We call the map
$r$ the \emph{natural retraction (of $D_2$ onto $D_1$)} and the map $g$ the
\emph{retracted (version of) $f$}.
\end{defn}

The map $g$ is designed to make $D_1$ invariant so that Theorem~\ref{dendr}
applies to $g$ and allows us to conclude that there are $g$-fixed points.
However these points are not necessarily fixed points of $f$. Indeed, $g(x)=x$
means that $r\circ f(x)=x$. Hence it really means that $f$ maps $x$ to a point
belonging to a component of $D_2\sm D_1$ which ``grows'' out of $D_1$ at $x$.
In particular, it means that $x\in E$. Thus, if $g(x)=x$ and $x\notin E$ then
$f(x)=x$. In general, it follows from the construction that if $f(x)\ne g(x)$,
then $g(x)\in E$ because points of $E$ are exactly those points of $D_1$ to
which points of $D_2\sm D_1$ map under $r$. We are ready to prove our first
lemma in this direction.

\begin{prop}\label{fixpt0}
Suppose that $f$ scrambles the boundary. Then $f$ has a fixed point.
\end{prop}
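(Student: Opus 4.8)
The plan is to use the retracted map $g=r\circ f:D_1\to D_1$ and Theorem~\ref{dendr} to produce a $g$-fixed point, and then show that the boundary scrambling hypothesis forces this point to actually be an $f$-fixed point. First I would apply Theorem~\ref{dendr} to $g$, obtaining a point $x\in D_1$ with $g(x)=x$. If $x\notin E$, then by the discussion following Definition~\ref{retr} we immediately get $f(x)=x$ and we are done. So the substance of the argument is the case $x\in E$.

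In the case $x\in E$, suppose for contradiction that $f(x)\neq x$. Since $g(x)=r(f(x))=x=d_{f(x)}$, the point $f(x)$ lies in a component of $D_2\setminus D_1$ that grows out of $D_1$ at $x$; in particular $x$ is a non-fixed point of $E$. The boundary scrambling property (Definition~\ref{bouscr}) then says that $f(x)$ is contained in a component, say $W$, of $D_2\setminus\{x\}$ which meets $D_1$. But $r$ maps everything in the component of $D_2\setminus\{x\}$ containing the "growing out" part of $D_2$ back to $x$, so I need to reconcile "$f(x)$ lies in a bush attached at $x$" with "$f(x)$ is in a component of $D_2\setminus\{x\}$ meeting $D_1$." The key point is that the retraction $r$ collapses the arc $[f(x),d_{f(x)}]=[f(x),x]$ to its endpoint $x$ only because this arc meets $D_1$ only at $x$; hence the component of $D_2\setminus\{x\}$ containing $f(x)$ is disjoint from $D_1$. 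This directly contradicts boundary scrambling. Therefore $f(x)=x$.

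The main obstacle I anticipate is the bookkeeping around the case $x\in E$: one must be careful that "the component of $D_2\setminus\{x\}$ containing $f(x)$ is disjoint from $D_1$" genuinely follows from $r(f(x))=x$ and $f(x)\neq x$. This is where the precise definition of $r$ via the arcs $[x,d_x]$ with $[x,d_x]\cap D_1=\{d_x\}$ does the work: if $f(x)\neq x=d_{f(x)}$, then $[f(x),x]$ is a nondegenerate arc meeting $D_1$ precisely at $x$, so every point of $[f(x),x)$—and in particular $f(x)$—lies in a single component of $D_2\setminus\{x\}$ that contains no point of $D_1$. Once this is nailed down, the contradiction with Definition~\ref{bouscr} is immediate and the proof concludes. (One should also note at the outset that $g$ is indeed a continuous self-map of $D_1$, which is exactly the content of Definition~\ref{retr}, so Theorem~\ref{dendr} applies.)
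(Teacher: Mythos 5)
Your proposal is correct and follows essentially the same route as the paper: apply Theorem~\ref{dendr} to the retracted map $g=r\circ f$ and use the scrambling hypothesis to rule out the problematic case of a $g$-fixed point in $E$ (the paper phrases this as "scrambling implies no point of $E$ is $g$-fixed," while you argue the contrapositive at the fixed point itself, but the content is identical). Your additional verification that the component of $D_2\setminus\{x\}$ containing $f(x)$ is disjoint from $D_1$ is a correct and worthwhile filling-in of a detail the paper leaves implicit.
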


\begin{proof}
We may assume that there are no $f$-fixed points $e\in E$. By
Theorem~\ref{dendr} the map $g_f=g$ has a fixed point $p\in D_1$. It follows
from the fact that $f$ scrambles the boundary that points of $E$ are not
$g$-fixed. Hence $p\notin E$, and by the argument right before the lemma
$f(p)=p$ as desired.
\end{proof}

It follows from Proposition~\ref{fixpt-1} and Proposition~\ref{fixpt0} that the
only behavior of points in $E$ which does not force the existence of a fixed
point in $D_1$ is when exactly one point $e\in E$ maps into a component of
$D_2\sm \{e\}$ which is disjoint from $D_1$ whereas any other point $e'\in E$
maps into a component of $D_2\sm \{e'\}$ which is not disjoint from $D_1$.

The next lemma shows that in some cases $p$ can be chosen to be a cutpoint of
$D_1$.

\begin{lem}\label{fxctpt}
Suppose that $f$ scrambles the boundary and all $f$-fixed endpoints of $D_1$
are weakly repelling. Then there is a fixed cutpoint of $f$.
\end{lem}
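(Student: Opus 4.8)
The plan is to combine the retracted map $g=g_f$ with the argument used in Proposition~\ref{fixpt0}, but now keeping track of \emph{cutpoints}. By Proposition~\ref{fixpt0} we already know $f$ has a fixed point; the issue is to produce one that is a cutpoint of $D_1$. First I would consider the set $\mathrm{Fix}(g)$ of fixed points of the retracted map $g$, which is nonempty and closed. If $\mathrm{Fix}(g)$ contains a point $p\notin E$, then as observed before Proposition~\ref{fixpt0} we get $f(p)=p$; and if moreover $p$ is a cutpoint of $D_1$ we are done. So the heart of the matter is the case where every fixed point of $g$ that is not in $E$ is an endpoint of $D_1$, and every fixed \emph{cutpoint} of $g$ lies in $E$. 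Since $f$ scrambles the boundary, no point of $E$ is $g$-fixed (this is exactly the observation used in Proposition~\ref{fixpt0}); hence in this bad case $g$ has no fixed cutpoints at all, i.e. $\mathrm{Fix}(g)$ consists entirely of endpoints of $D_1$, and each such endpoint is in fact $f$-fixed. By hypothesis each such $f$-fixed endpoint is weakly repelling.

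Next I would exploit weak repelling to push a fixed point of $g$ into the interior. Fix an $f$-fixed endpoint $p\in\mathrm{Fix}(g)$; it is weakly repelling, so (using the characterization right after Definition~\ref{wkrep}) either $p$ is a limit of fixed cutpoints of $f$ in the unique component $B$ of $D_1\sm\{p\}$ --- in which case those cutpoints are fixed cutpoints of $f$ and we are done --- or there is the associated point $x_p\in B$ of valence $2$ separating $p$ from $f(x_p)$ and a neighborhood $U_p$ (the component of $\ol B\sm\{x_p\}$ containing $p$) containing no fixed points but $p$. The key point is that on $U_p$ the map $g$ behaves like a map pushing away from $p$: since $p$ is $g$-fixed and $U_p$ contains no $g$-fixed points other than $p$ (a $g$-fixed point in $U_p\sm\{p\}$ would either be an $f$-fixed point, contradicting the choice of $U_p$, or lie in $E$, impossible by boundary scrambling), one can remove a small arc-neighborhood of each such endpoint $p$ from $D_1$. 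Concretely, I would let $D_1'$ be $D_1$ with the open arc-neighborhood $U_p$ (shrunk so its closure misses all other elements of $\mathrm{Fix}(g)$) deleted for each such $p$, obtaining a subdendrite $D_1'\subset D_1$ whose new endpoints are the points $x_p$ and such that $g(D_1')\subset D_1$ with $g$ having no fixed points on $\ol{U_p}\sm\{p\}$.

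The cleanest way to finish is then to re-run the retraction trick one level down: let $r':D_1\to D_1'$ be the natural monotone retraction and $g'=r'\circ g:D_1'\to D_1'$. By Theorem~\ref{dendr}, $g'$ has a fixed point $q\in D_1'$. If $q$ is a new endpoint $x_p$, then $r'(g(x_p))=x_p$; but $x_p$ separates $p$ from $f(x_p)$, so $g(x_p)=r(f(x_p))$ lies on the $p$-side, i.e. $g(x_p)\in U_p$, forcing $r'(g(x_p))=x_p$ --- which is consistent --- so I must instead argue that $x_p$ being valence $2$ and the orbit structure near $p$ (a one-sided preimage sequence converging to $p$, as in the proof of the remark after Definition~\ref{wkrep}) prevents $x_p$ from being the \emph{only} fixed point, or simply choose $U_p$ so small that $g(x_p)\notin \ol{U_p}$, making $x_p$ non-fixed for $g'$. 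Then $q$ is an \emph{old} point of $D_1'$ that is $g$-fixed, hence (by the bad-case analysis) either an endpoint of $D_1$ --- but the only such endpoints were the deleted $p$'s, so this cannot happen --- or a cutpoint of $D_1$ lying outside $E$, whence $f(q)=q$ and $q$ is the desired fixed cutpoint.

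The main obstacle I expect is the bookkeeping in the last paragraph: ensuring that after deleting the neighborhoods $U_p$ the new endpoints $x_p$ genuinely fail to be fixed points of the twice-retracted map $g'$, which requires choosing each $U_p$ small enough that $g(x_p)$ escapes $\ol{U_p}$ --- this is where weak repelling is used essentially, via the existence of the separating point $x_p$ with $f(x_p)$ (and hence $g(x_p)$) strictly on the far side of $x_p$ from $p$. A secondary subtlety is that $\mathrm{Fix}(g)$ could be infinite, so one should first argue (using that there are only countably many branchpoints and that $g$-fixed endpoints are isolated among endpoints by the weak-repelling/no-fixed-points-in-$U_p$ condition, or just passing to a convergent subsequence and noting the limit would be a fixed cutpoint) that either we already have a fixed cutpoint of $f$, or only finitely many $U_p$'s are needed.
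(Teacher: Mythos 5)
Your proposal is correct and follows essentially the same route as the paper: the paper likewise reduces to the case where every fixed point is an endpoint, covers these by finitely many pairwise disjoint neighborhoods $U_a$, and applies Proposition~\ref{fixpt0} to the complementary subdendrite, whose new boundary points $x_a$ satisfy the scrambling condition precisely because each $x_a$ separates $a$ from $f(x_a)$. The one slip is your passing claim that $g(x_p)$ lies on the $p$-side of $x_p$ (weak repelling puts $f(x_p)$, hence $g(x_p)$, on the \emph{far} side, which is exactly what makes the new boundary points non-fixed), but you correct this yourself and the argument stands.
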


\begin{proof}
Suppose that $f$ has no fixed cutpoints. By Proposition~\ref{fixpt0}, the set
of fixed points of $f$ is not empty. Hence we may assume that \emph{all} fixed
points of $f$ are endpoints of $D_1$. Let $a, b$ be distinct fixed points of
$f$. Then it follows that either $U_a\subset U_b$, or $U_b\subset U_a$, or
$U_a\cap U_b=\0$. Indeed, suppose that $x_a\in U_b$. Let us show that then
$U_a\subset U_b$. Indeed, otherwise by Lemma~\ref{fixpt0}.(1) there exists a
fixed point $c\in (x_a, x_b)$, a contradiction. Now, suppose that neither
$x_a\in U_b$ nor $x_b\in U_a$. Then clearly $U_a\cap U_b=\0$. Consider an open
covering of the set of all fixed points $a\in D_1$ by their neighborhoods $U_a$
and choose a finite subcover. By the above we may assume that its consists of
pairwise disjoint sets $U_{a_1}, \dots, U_{a_k}$. Consider now the component
$Q$ of $D_1$ whose endpoints are the points $a_1, \dots, a_k$. It is easy to
see that $f|_Q$, with $Q$ considered as a subdendrite of $D_2$, scrambles the
boundary and has no fixed endpoints. Hence an $f$-fixed point $p\in Q$,
existing by Lemma~\ref{fixpt0}, must be a cutpoint of $D_1$.
\end{proof}

Lemma~\ref{fxctpt} is helpful in the next theorem which shows that under some rather
weak assumptions on periodic points the map has infinitely many periodic cutpoints.

\begin{thm}\label{infprpt}
Suppose that $f:D\to D$ is continuous and all its periodic points are weakly repelling.
Then $f$ has infinitely many periodic cutpoints.
\end{thm}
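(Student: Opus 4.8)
The plan is to argue by contradiction: suppose $f$ has only finitely many periodic cutpoints. Since periodic endpoints are also in short supply relative to the dynamics, the strategy is to isolate a small invariant subdendrite on which $f$ (or a power of $f$) has \emph{no} periodic cutpoints at all, and then derive a contradiction with Lemma~\ref{fxctpt} applied to a suitable iterate.

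First I would reduce to a cleaner setting. Let $P$ be the (finite) set of periodic cutpoints. Replacing $f$ by an iterate $f^n$, I may assume every point of $P$ is fixed and that the finitely many components of $D\sm P$ are permuted in a controlled way; in fact, passing to a further iterate, I may assume each such component is invariant. Pick one component $B$ of $D\sm P$ whose closure $\ol B$ is a subdendrite meeting $P$ in one or two points (its ``boundary'' relative to $D$), and consider $f^m$ restricted to $\ol B$ for an appropriate $m$. The point of passing to $\ol B$ is that it should contain \emph{no} periodic cutpoints of $f^m$ in its interior (those all lie in $P$, hence on $\partial \ol B$), while still carrying periodic points by Theorem~\ref{dendr} (applied to the retracted map, or directly since some iterate maps $\ol B$ into itself).

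Next I would verify the hypotheses of Lemma~\ref{fxctpt} for $f^m|_{\ol B}$, viewing $\ol B$ as $D_1$ inside $D_2 = D$. The endpoints of $\ol B$ that lie in $P$ are fixed cutpoints of $D$; I need to check that $f^m$ \emph{scrambles the boundary} of $\ol B$ and that all $f^m$-fixed endpoints of $\ol B$ are weakly repelling. The weakly-repelling hypothesis is inherited from the global assumption (recall from the remark after Definition~\ref{wkrep} that a weakly repelling fixed point of $f$ is weakly repelling for every power $f^n$). The boundary-scrambling property is the delicate point: for a non-fixed boundary point $e$ of $\ol B$ (necessarily a point of $P$, hence a periodic \emph{cutpoint} of $f$), weak repelling of $e$ — or rather of the periodic orbit it lies on — forces $f^m(e)$ back toward $D_1 = \ol B$ along the right component, which is exactly the scrambling condition. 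Granting this, Lemma~\ref{fxctpt} produces an $f^m$-fixed cutpoint of $\ol B$, hence an $f$-periodic cutpoint of $D$ lying in the interior of $\ol B$, contradicting that all periodic cutpoints lie in $P\subset \partial\ol B$.

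The main obstacle, I expect, is organizing the combinatorics of the reduction so that the subdendrite $\ol B$ genuinely has an invariant (or suitably periodic) power of $f$ \emph{and} inherits the boundary-scrambling property from weak repelling at the finitely many periodic cutpoints. Two subtleties must be handled carefully: first, a boundary point $e$ of $\ol B$ might be weakly repelling only ``into'' $D\sm \ol B$, in which case one must check $f^m(e)$ still does not land in the component of $D\sm\{e\}$ equal to $\ol B$ — here one exploits that $e$ is a \emph{cutpoint} with several complementary components and that weak repelling, being about separation of $e$ from its image, pushes the orbit away from $\ol B$ only if that is where the repelling behavior points; if not, one chooses a \emph{different} component $B$. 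Second, one must ensure that after passing to iterates the relevant points are still cutpoints of valence $2$ and that the neighborhoods $U_a$ from Definition~\ref{wkrep} behave consistently; this is routine given the countability remarks in the text but needs to be stated. Once these bookkeeping issues are settled, the contradiction is immediate and the theorem follows.
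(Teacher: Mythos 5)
Your overall strategy matches the paper's: assume there are finitely many periodic cutpoints, pass to an iterate making them all fixed (call the set $A$), look at the components of $D\sm A$, and apply Lemma~\ref{fxctpt} to the closure of one of them. But there is a genuine gap exactly at the point you flag as ``the main obstacle'' and then dispose of with ``one chooses a different component $B$''. The problem is not to find, for a single troublesome boundary point $e$, some component into which $e$ repels; it is to produce one component $Z$ of $D\sm A$ such that \emph{every} point of $\bd(Z)$ is weakly repelling \emph{in} $Z$ simultaneously --- only then do the hypotheses of Lemma~\ref{fxctpt} hold for $\ol{Z}$ (the scrambling condition is vacuous, since all points of $\bd(Z)\subset A$ are fixed, but the weak repelling of fixed endpoints must hold in the direction of $Z$). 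Switching components to accommodate one bad boundary point may create a new bad boundary point in the new component, and you give no reason the search terminates. The paper closes this with an explicit chasing argument: if $a\in\bd(B)$ is not weakly repelling in $B$, then, since $a$ is weakly repelling by hypothesis, it is weakly repelling in some other component $C$ adjacent to $a$; move to $C$ and repeat; the boundary point of $C$ that fails cannot be $a$ itself, so the walk never backtracks, and the tree structure of $D$ together with the finiteness of $A$ forces termination at a component $Z$ all of whose boundary points are weakly repelling in $Z$. That combinatorial step is the heart of the proof and is absent from your proposal.

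A secondary issue: your reduction to an iterate under which each component of $D\sm A$ is \emph{invariant} is neither available nor needed. Since $f(B)$ is connected, it may pass through a fixed cutpoint in $A$ and spread over several components, so no power of $f$ permutes the components in general, and $f^m(\ol{B})\subset\ol{B}$ generally fails. The machinery of the section (the retraction $g=r\circ f$, Proposition~\ref{fixpt0}, Lemma~\ref{fxctpt}) is designed precisely so that invariance of $\ol{B}$ is not required; what is required is boundary scrambling, which here is automatic because every point of $E=\bd(B)$ is fixed.
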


\begin{proof}
By way of contradiction suppose that there are finitely many periodic cutpoints
of $f$. Without loss of generality we may assume that these are points $a^1,
\dots, a^k$ each of which is \emph{fixed} under $f$. Let $A=\cup^k_{i=1} a^i$
and let $B$ be component of $D\sm A$. Then $\ol{B}$ is a subdendrite of $D$ to
which the above tools apply: $D$ plays the role of $D_2$, $\ol{B}$ plays the
role of $D_1$, and $E$ is exactly the boundary $\bd(B)$ of $B$ (by the
construction $\bd(B)\subset A$). Suppose that each point $a\in \bd(B)$ is
weakly repelling in $B$. Then by the assumptions of the theorem
Lemma~\ref{fxctpt} applies to this situation. It follows that there exists a
fixed cutpoint of $B$, a contradiction. Hence for some $a\in \bd{B}$ we have
that $a$ is \emph{not} weakly repelling in $B$. Since by the assumptions $a$
\emph{is} weakly repelling, there exists another component, say, $C$, of $D\sm
A$ such that $a\in \bd(C)$ and $a$ \emph{is} weakly repelling in $C$.

We can now apply the same argument to $C$. If all boundary points of $C$ are
weakly repelling in $C$ then by Lemma~\ref{fxctpt} $C$ will contain a fixed
cutpoint, a contradiction. Hence there exists a point $d\in A$ such that $d$ is
\emph{not} weakly repelling in $C$ and a component $F$ of $D\sm A$ whose
closure meets $\ol{C}$ at $d$, and $d$ \emph{is} weakly repelling in $F$.
Clearly, after finitely many steps this process will have to end ultimately
leading to a component $Z$ of $D\sm A$ such that all points of $\bd(Z)$ are
weakly repelling in $Z$. Since here the set $\bd(Z)$ plays the role of the set
$E$ from above and by the assumptions of the theorem we see that
Lemma~\ref{fxctpt} applies to $Z$ and there exists a fixed cutpoint of $Z$, a
contradiction.
\end{proof}

An important application of Theorem~\ref{infprpt} is to the dendritic
\emph{topological Julia sets}. They can be defined as follows. Consider an
equivalence relation $\sim$ on the unit circle $\ucirc$. Equivalence classes of
$\sim$ will be called \emph{($\sim$-)classes} and will be denoted by boldface
letters. A $\sim$-class consisting of two points is called a \emph{leaf}; a
class consisting of at least three points is called a \emph{gap} (this is more
restrictive than Thurston's definition in \cite{thur85}; we follow \cite{bl02}
in our presentation).  Fix an integer $d>1$ and denote the map $z^d:\uc\to\uc$
by $\si_d$. Then $\sim$ is said to be a \emph{($d$-)invariant lamination} if:

\noindent (E1) $\sim$ is \emph{closed}: the graph of $\sim$ is a closed set in
$\ucirc \times \ucirc$;

\noindent (E2) $\sim$ defines a \emph{lamination}, i.e., it is \emph{unlinked}:
if $\g_1$ and $\g_2$ are distinct $\sim$-classes, then their convex hulls
$\ch(\g_1), \ch(\g_2)$ in the unit disk $\disk$ are disjoint,

\noindent (D1) $\sim$ is \emph{forward invariant}: for a class $\g$, the set
$\si_d(\g)$ is a class too

\noindent which implies that

\noindent (D2) $\sim$ is \emph{backward invariant}: for a class $\g$, its
preimage $\si_d^{-1}(\g)=\{x\in \ucirc: \si_d(x)\in \g\}$ is a union of
classes;

\noindent (D3) for any gap $\g$, the map $\si_d|_{\g}: \g\to \si_d(\g)$ is a
\emph{covering map with positive orientation}, i.e., for every connected
component $(s, t)$ of $\ucirc\setminus \g$ the arc $(\si_d(s), \si_d(t))$ is a
connected component of $\ucirc\setminus \si_d(\g)$.

The lamination in which all points of $\uc$ are equivalent is said to
\emph{degenerate}. It is easy to see that if a forward invariant lamination
$\sim$ has a class with non-empty interior then $\sim$ is degenerate. Hence
equivalence classes of any non-degenerate forward invariant lamination are
totally disconnected.

Call a class $\g$ \emph{critical} if $\si_d|_{\g}: \g\to \si(\g)$ is not
one-to-one, \emph{(pre)critical} if $\si_d^j(\g)$ is critical for some $j\ge
0$, and \emph{(pre)periodic} if $\si^i_d(\g)=\si^j_d(\g)$ for some $0\le i<j$.
A gap $\g$ is \emph{wandering} if $\g$ is neither (pre)periodic nor
(pre)critical. Let $p: \ucirc\to J_\sim=\ucirc/\sim$ be the quotient map of
$\ucirc$ onto its quotient space $J_\sim$, let $f_\sim:J_\sim \to J_\sim$ be
the  map induced by $\sigma_d$. We call $J_\sim$ a \emph{topological Julia set}
and the induced map $f_\sim$ a \emph{topological polynomial}. It is easy to see
that if $\g$ is a $\sim$-class then $\val_{J_\sim}(p(\g))=|\g|$ where by $|A|$
we denote the cardinality of a set $A$.

\begin{thm}\label{lamwkrp}
Suppose that the topological Julia set $J_\sim$ is a dendrite and
$f_\sim:J_\sim\to J_\sim$ is a topological polynomial. Then all periodic points
of $f_\sim$ are weakly repelling and $f_\sim$ has infinitely many periodic
cutpoints.
\end{thm}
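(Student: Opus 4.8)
The plan is to reduce the statement to Theorem~\ref{infprpt} by verifying that every periodic point of $f_\sim$ is weakly repelling; once that is done, Theorem~\ref{infprpt} immediately yields infinitely many periodic cutpoints. So the real content is the first assertion. First I would pass to an appropriate power: if $\mathbf{g}=p^{-1}(a)$ is the $\sim$-class corresponding to a periodic point $a$ of period $k$, then $\si_d^k$ maps the finite set $\mathbf{g}$ to itself, hence permutes it, so a further power $f_\sim^n$ fixes $a$ and fixes $\mathbf{g}$ pointwise; since weak repelling is preserved under taking powers (as observed after Definition~\ref{wkrep}), it suffices to treat a fixed point $a$ with $f_\sim(a)=a$ and $\si_d$ fixing each point of $\mathbf{g}=\{x_1<\dots<x_m\}$ (circularly ordered). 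I would split into the endpoint case $|\mathbf{g}|=1$ and the cutpoint case $|\mathbf{g}|\ge 2$.

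For the cutpoint case, let $(x_j,x_{j+1})$ be a component of $\ucirc\setminus\mathbf{g}$; its endpoints are fixed by $\si_d$, so the restriction $\si_d\colon[x_j,x_{j+1}]\to\ucirc$ is an orientation-preserving covering of a closed arc onto an arc that starts and ends at the same endpoints and wraps at least once (in fact $d^{\text{?}}$ times but at least once) — consequently, by an elementary intermediate-value / graph-transform argument on the circle, there is a point $y\in(x_j,x_{j+1})$ with $\si_d(y)=x_j$ and, just inside, the dynamics pushes points in $(x_j,y)$ away from $x_j$. Projecting through $p$, the component $B$ of $J_\sim\setminus\{a\}$ corresponding to the arc $(x_j,x_{j+1})$ contains a point $p(y)$ whose image $f_\sim(p(y))=a$ is separated from $p(y)$ by nothing — more precisely I want a point $z$ near $a$ in $B$ with $a$ separating $z$ from $f_\sim(z)$, which is exactly the configuration in Definition~\ref{wkrep}. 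The key subtlety is that laminational identifications can glue the arc endpoints to other parts of $J_\sim$, so I must check that the separation statement survives the quotient; this is where property (D3) — that $\si_d$ restricted to a gap respects the cyclic order of complementary arcs — is used, guaranteeing that the "outside" arc $(x_j,x_{j+1})$ maps forward in an orientation-consistent way and that $p$ does not collapse the relevant sub-arc.

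For the endpoint case $\mathbf{g}=\{x\}$ with $\si_d(x)=x$, the point $x$ is a fixed point of $\si_d\colon\ucirc\to\ucirc$, a degree-$d$ map with $d>1$, hence $x$ is a repelling fixed point of $\si_d$ in the naive sense: nearby points on both sides move away. Translating: for $a=p(x)$ the unique component $B$ of $J_\sim\setminus\{a\}$ meeting $J_\sim$ corresponds (after projection) to a neighborhood on which $f_\sim$ expands in the sense that there is $z$ close to $a$ with $a$ between $z$ and $f_\sim(z)$. Again one must ensure the preimage sequence constructed towards $a$ stays in $B$ and converges to $a$ rather than to some other fixed point — this follows because $J_\sim$ is a dendrite (so arcs are unique and the retraction/order arguments of the dendrite section apply) together with the fact, noted in the excerpt, that we may choose the backward orbit to consist of valence-$2$ cutpoints of $J_\sim$, avoiding the countably many vertices.

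The main obstacle I anticipate is the bookkeeping at the quotient level: phrasing "a point separating $a$ from its image" purely in terms of the circle dynamics of $\si_d$ and then transporting it faithfully through $p$ to $J_\sim$, making sure the laminational gluings (E2), (D2), (D3) neither destroy the separation nor introduce an unexpected fixed point in the relevant neighborhood. Everything else — passing to a power, the endpoint expansion from $d>1$, invoking Theorem~\ref{infprpt} — is routine given the machinery already set up. I would therefore organize the proof as: (1) reduce to $f_\sim(a)=a$ with $\mathbf{g}$ pointwise fixed; (2) handle $|\mathbf{g}|\ge 2$ via the complementary-arc covering argument and (D3); (3) handle $|\mathbf{g}|=1$ via repulsion of fixed points of $\si_d$; (4) conclude all periodic points are weakly repelling and apply Theorem~\ref{infprpt}.
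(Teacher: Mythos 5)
Your overall strategy (check that every periodic point is weakly repelling, then quote Theorem~\ref{infprpt}) is exactly the paper's, and your treatment of the endpoint case and the case of a finite class $\mathbf{g}=p^{-1}(a)$ matches the paper's argument in substance. But there is a genuine gap in your step (1), and it propagates into your cutpoint case. You reduce to the situation where some power of $\si_d$ fixes $\mathbf{g}$ pointwise by asserting that $\si_d^k$ ``maps the finite set $\mathbf{g}$ to itself, hence permutes it.'' Classes of an invariant lamination need not be finite: a gap can be an infinite (totally disconnected) set, corresponding to a branch point of $J_\sim$ of infinite valence, and the paper explicitly allows this ($\val_{J_\sim}(p(\g))=|\g|$). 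For an infinite invariant class, $\si_d|_{\mathbf{g}}$ is only a positively oriented covering map — it may fail to be injective (if $\mathbf{g}$ is critical), and even when it is an order-preserving bijection it can act without any periodic points on $\mathbf{g}$ (an ``irrational rotation''-like action on a Cantor set). In that situation no power of $f_\sim$ fixes $\mathbf{g}$ pointwise, the set of components of $U\sm\{x\}$ is infinite, and your complementary-arc argument, which needs the two endpoints $x_j,x_{j+1}$ of the relevant arc to be $\si_d^n$-fixed, never gets off the ground.

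The paper closes exactly this gap with a nontrivial extra input. It considers the induced map $h$ on the (possibly infinite) set $\A$ of components of $U\sm\{x\}$ and must \emph{prove} that some $E\in\A$ is $h$-periodic before it can run the argument you describe. This is done by observing that only finitely many of the corresponding components of $J_\sim\sm\{x\}$ contain critical points, and that an $h$-orbit which never meets that finite collection would consist of components on which $f_\sim$ is a homeomorphism, producing a wandering subcontinuum of $J_\sim$ — impossible by Theorem~C of \cite{bl02}. The non-existence of wandering subcontinua is an essential external theorem here, not bookkeeping, and your proposal omits it entirely. If you add the case distinction on $|\mathbf{g}|$ being infinite and supply the periodic-component argument via \cite{bl02}, the rest of your outline goes through as in the paper.
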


\begin{proof}
Suppose that $x$ is an $f_\sim$-fixed point and set $\g=p^{-1}(x)$. If $x$ is
an endpoint then $\g$ is a singleton. Connect $x$ with a point $y\ne x$. Then
the arc $[x, y]\subset J_\sim$ contains points $y_k\to x$ of valence $2$
because, as is well known, there are no more than countably many vertices of
$J_\sim$. It follows that $\sim$-classes $p^{_1}(y_k)$ are leaves separating
$\g$ from the rest of the circle and repelled from $\g$ under the action of
$\si$. Hence $f_\sim(y_i)$ is separated from $x$ by $y_i$ and so $x$ is weakly
repelling.

Suppose that $x$ is not an endpoint. Choose a very small connected neighborhood
$U$ of $x$. It is easy to see that each component $A$ of $U\sm \{x\}$
corresponds to a unique chord $\ell_A\in \bd(\ch(\g))$. Moreover, for each
component $A$ of $U\sm \{x\}$ there exists a unique component $B$ of $U\sm
\{x\}$ such that $f_\sim(A)\cap B\ne \0$. Hence there is a map $h$ from the set
$\A$ of all components of $U\sm \{x\}$ to itself. Suppose that there exists
$E\in \A$ and $n>0$ such that $h^n(E)=E$. Then it follows that the endpoints of
$\ell_E$ are fixed under $\si^n$. Connect $x$ with a point $y\in E$ and choose,
as in the previous paragraph, a sequence of points $y_k\in [x, y], y_k\to x$ of
valence $2$. Then again by the repelling properties of $\si^n$ it follows that
$f_\sim(y_i)$ is separated from $x$ by $y_i$ and so $x$ is weakly repelling
(for $f^m_\sim$ in $E$).

It remains to show that there $E\in \A$ with $h^n(E)=E$ for some $n>0$ must
exist. Suppose otherwise. To each component $C$ of $U\sm \{x\}$ we associate
the corresponding component $J_C$ of $J_\sim\sm \{a\}$ containing $C$. Then
there are only finitely many such components $C$ of $U\sm \{x\}$ that $J_C$
contains a critical point; denote their collection by $\mathcal C$. Let us show
that an eventual $h$-image of every $E\in \A$ must coincide with an element of
$\mathcal C$. Indeed, otherwise there is a component $E\in \A$ such that all
$h^k(E)$ are distinct and the map $f_\sim|_{J_{h^k(E)}}$ is a homeomorphism.
Clearly, this implies the existence of a wandering subcontinuum $K$ of
$J_\sim$. However by Theorem C \cite{bl02} this is impossible.

Hence all periodic points of $f_\sim$ are weakly repelling and by
Theorem~\ref{infprpt} $f_\sim$ has infinitely many periodic cutpoints.
\end{proof}

\subsection{Positively oriented maps of the plane}

In this subsection we will first obtain a general fixed point theorem which
shows that if a non-separating plane continuum, not necessarily invariant, maps
in an appropriate way, then it must contain a fixed point. This extends
Theorem~\ref{fmot1}. Let us denote the family of all positively oriented maps
of the plane by $\p$.

\begin{thm}[\cite{fmot07}]\label{fmot1}
Suppose that $f\in \p$ and that $X\subset \C$ is a non-separating continuum
such that $f(X)\subset X$. Then there exists a fixed point $p\in X$.
\end{thm}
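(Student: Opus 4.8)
The plan is to reduce the claim to Theorem~\ref{basic} via the variation machinery: construct a bumping simple closed curve $S$ around $X$ with an allowable partition whose links all have non-negative variation, so that Theorem~\ref{FMOT} gives $\ind{f}{S}\ge 1\ne 0$, and then shrink $S$ toward $X$ to place the resulting fixed point inside $X$ itself (since $f(X)\subset X$ and $X$ is non-separating, a fixed point in $T(S)$ for arbitrarily tight $S$ must lie in $X$). So the real work is producing such an $S$ together with junctions certifying $\var{f}{Q_i}\ge 0$ for every link $Q_i$.

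First I would set up the construction. We may assume $f$ has no fixed point on some neighborhood boundary of $X$ (otherwise we are done, or we can perturb), and that $f(X)\subsetneq$ nothing pathological — actually the key point is that $f(X)\subset X$ means the ``obstruction'' to building good junctions is controlled. Cover $\bd(X)$ by finitely many small crosscuts / bumping arcs $A_1,\dots,A_n$ meeting end to end at points $a_i\in X$ with $f(a_i)\in X$ and $f(A_i)\cap A_i=\emptyset$ (possible by uniform continuity once the arcs are short and close to $X$, using that $f(X)\subset X$ and $f$ has no fixed points near $\bd(X)$). For each link $Q_i=[a_i,a_{i+1}]$ pick a junction $J_{v_i}$ based at an interior point $v_i\in Q_i$ with $J_{v_i}\cap(X\cup Q_i)=\{v_i\}$; because $f(X)\subset X\subset T(S)$ and the rays of the junction run out to infinity, one can choose the homeomorphism defining $J_{v_i}$ so that $f(X)$ misses the relevant rays, i.e. so that there is a continuum $K=X$ (or a large subcontinuum) with $f(K)\cap J_{v_i}\subset\{v_i\}$. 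That is exactly the hypothesis of Corollary~\ref{posvar}, which then yields $\var{f}{Q_i}\ge 0$ for each $i$.

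Given this, Theorem~\ref{FMOT} applies to $S=Q_1\cup\dots\cup Q_n$: since each $a_i$ satisfies $f(a_i)\in T(S)$ and $f(Q_i)\cap Q_i=\emptyset$, we get $\ind{f}{S}=\sum_i\var{f}{Q_i}+1\ge 1>0$, so by Theorem~\ref{basic} $f$ has a fixed point in $T(S)$. Finally, letting the mesh of the covering tend to $0$ we obtain bumping curves $S_k$ with $T(S_k)\to X$ in the Hausdorff metric and fixed points $p_k\in T(S_k)$; any limit point $p$ of $\{p_k\}$ lies in $X$ and is fixed by continuity of $f$, giving $p\in X$ as required.

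The main obstacle is the junction-selection step, i.e. verifying that for each short link $Q_i$ near $\bd(X)$ one can route a junction $J_{v_i}$ so that $f(X)$ (or an adequate subcontinuum serving as the $K$ of Corollary~\ref{posvar}) avoids it while $J_{v_i}\cap(X\cup Q_i)=\{v_i\}$; this is where the hypothesis $f(X)\subset X$ is genuinely used and where one must be careful that the three rays of the junction, which must escape to $\infty$, can be threaded through $\C\sm X$ without hitting $f(X)$. One must also check that the points $a_i$ can simultaneously be chosen with $f(a_i)\in X$ and $f([a_i,a_{i+1}])\cap[a_i,a_{i+1}]=\emptyset$ — a compactness/uniform-continuity argument that is routine but needs the absence of fixed points on a neighborhood of $\bd(X)$, a case that must be disposed of separately (and trivially, since such a fixed point already lies arbitrarily close to $X$, hence in $X$).
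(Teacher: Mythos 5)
Your proposal is correct and is essentially the standard argument: it is exactly the proof from \cite{fmot07} (which the paper cites for this theorem rather than reproving) and coincides with the degenerate case of the paper's proof of Theorem~\ref{fixpt}, where the absence of exit continua makes every link's variation non-negative via Corollary~\ref{posvar} with $K=X$ (note that $f(X)\subset X$ and $J_v\cap X\subset\{v\}$ make the junction condition automatic, so the step you flag as the main obstacle is in fact immediate). The remaining ingredients — small links moving off themselves under the assumption that $f$ is fixed point free near $X$, Theorem~\ref{FMOT} to get $\ind{f}{S}\ge 1$, Theorem~\ref{basic}, and tightening $S$ to $X$ — are all as in the source.
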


To proceed we will need to generalize Corollary~\ref{posvar} to a more general
situation. To this end we introduce a definition similar to the one given for
dendrites in the previous section.

\begin{defn}\label{scracon}
Suppose that $f\in \p$ and $X$ is non-separating continuum. Suppose that there
exist $n\ge 0$ disjoint non-separating continua $Z_i$ such that the following
properties hold:

\begin{enumerate}

\item $f(X)\sm X\subset \cup_i Z_i$;

\item for all $i$,  $Z_i\cap X=K_i$ is a non-separating continuum;


\item for all $i$, $f(K_i)\cap [Z_i\sm K_i]=\0$.

\end{enumerate}

\noindent Then the map $f$ is said to \emph{scramble the boundary (of $X$}). If
instead of (3) we have

\begin{enumerate}

\item[(3a)] for all $i$, either $f(K_i)\subset K_i$, or
$f(K_i)\cap Z_i=\0$

\end{enumerate}

\noindent then we say that $f$  \emph{strongly scrambles the boundary (of
$X$)}. In either case, $K_i$'s are called \emph{exit continua (of $X$)}. Note
that since $Z_i\cap X$ is a continuum, $X\cup (\bigcup Z_i)$ is a
non-separating continuum. Speaking of maps which (strongly) scramble the
boundary, we always use the notation from this definition unless explicitly
stated otherwise.
\end{defn}

Observe that in the situation of Definition~\ref{scracon} if $X$ is invariant
then $f$ automatically strongly scrambles the boundary because we can simply
take  the set of exit continua to be empty. Also, if $f$ strongly scrambles the
boundary of $X$ and $f(K_i)\not\subset K_i$ for any $i$, then it is easy to see
that there exists $\e>0$ such that for every point $x\in X$ either $d(x,
Z_i)>\e$, or $d(f(x),Z_i)>\e$. Let us now prove the following technical lemma.

\begin{lem}\label{posvar+}
Suppose that $f:\C\to\C$ scrambles the boundary of $X$. Let $Q$ be a bumping
arc of $X$ with endpoints $a<b\in X$ such that $f(\{a,b\})\subset X$ and
$f(Q)\cap Q=\0$. Then $\var{f}{Q}\ge 0$.
\end{lem}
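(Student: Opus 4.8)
The plan is to reduce the statement to the already-proven Corollary~\ref{posvar} by exhibiting a continuum $K\subset X$ whose $f$-image avoids the relevant junction (except possibly at its base point). Fix the bumping arc $Q=[a,b]$ and, as in the definition of variation, choose a junction $J_v$ with base point $v\in Q$ such that $J_v\cap(X\cup Q)=\{v\}$; recall that variation is independent of the choice of such a junction. If $f(Q)\cap J_v=\emptyset$ then the variation is $0$ and there is nothing to prove, so we may assume $f(Q)$ meets $J_v$; in particular, the count of crossings is taken over a nonempty finite set of arcs.

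Next I would use the hypothesis that $f$ scrambles the boundary of $X$. By Definition~\ref{scracon}, $f(X)\setminus X\subset\bigcup_i Z_i$ with $Z_i\cap X=K_i$ a non-separating continuum and $f(K_i)\cap[Z_i\setminus K_i]=\emptyset$. The key observation is that the junction $J_v$, being disjoint from $X$ outside $v$, lies in a single complementary domain of $X$ near each of its rays, and there is at most one exit continuum $Z_{i_0}$ whose associated ``growth'' is encountered by $J_v$ — more precisely, after perturbing $J_v$ slightly near $v$ (which does not change the crossing count of $f(Q)$, exactly as in the proof of Corollary~\ref{posvar}) we may arrange that $J_v$ meets at most one $Z_i$, and that it meets $Z_{i_0}$ only in the region $Z_{i_0}\setminus K_{i_0}$. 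Then I claim $K=K_{i_0}$ (or $K=X$ if no such $Z_i$ is met, using invariance-type reasoning via Theorem~\ref{fmot1} applied to $X\cup\bigcup Z_i$) is the desired continuum: $f(K_{i_0})\cap(Z_{i_0}\setminus K_{i_0})=\emptyset$ by property (3), and $f(K_{i_0})\subset X\cup\bigcup_j Z_j$ meets the other $Z_j$'s and the junction's far rays nowhere, so $f(K)\cap J_v\subset\{v\}$. Corollary~\ref{posvar} then yields $\var{f}{Q}\ge 0$.

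The main obstacle I anticipate is the bookkeeping in the previous paragraph: showing that the junction $J_v$ can be chosen (or perturbed) so that it interacts with \emph{at most one} exit continuum, and that $f(K_{i_0})$ genuinely avoids all of $J_v$ except possibly $v$. The subtlety is that $f(K_{i_0})$ is only controlled relative to $Z_{i_0}$ by hypothesis (3); to see it avoids the rest of $J_v$ one must use that $f(K_{i_0})\subset f(X)\subset X\cup\bigcup_i Z_i$ together with the fact that $J_v$ avoids $X$ and all $Z_i$ with $i\ne i_0$ off a neighborhood of $v$. Handling the case $n=0$ (so $f(X)\subset X$ and $X$ itself is invariant) is immediate and serves as the base case: there one simply takes $K=X$ and applies Corollary~\ref{posvar} directly. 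A secondary point requiring care is that the base point $v\in Q$ need not lie in any $K_i$, so the perturbation argument from Corollary~\ref{posvar} — moving $J_v$ to $J_d$ in a small disk around $v$ without altering the intersection pattern with $f(Q)$ — must be invoked to put $v$ into the right complementary region relative to $K_{i_0}$.
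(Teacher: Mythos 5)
Your overall strategy --- reducing to Corollary~\ref{posvar} by producing a continuum $K\subset X$ with $f(K)\cap J_v\subset\{v\}$, taking $K=X$ when the junction can be made to avoid all the $Z_i$ and $K=K_{i_0}$ otherwise --- is exactly the paper's, and your analysis of why $f(K_{i_0})$ avoids $J_v$ (via $f(X)\subset X\cup\bigcup_j Z_j$, property (3) of scrambling, and the junction missing the other $Z_j$) is correct. But there are two genuine problems with the execution. First, a perturbation of $J_v$ ``slightly near $v$'' cannot arrange that the junction meets at most one $Z_i$: the junction runs out to infinity and the $Z_j$ may sit anywhere in $\C\sm X$, so avoiding them is a global re-routing, justified by the fact that $X\cup\bigcup_{j\ne i_0}Z_j$ is non-separating (this is what the paper invokes). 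Your appeal to the junction lying ``in a single complementary domain of $X$'' proves nothing, since $\C\sm X$ is a single domain and it contains every $Z_j\sm K_j$.

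Second, and more seriously, the proof of Corollary~\ref{posvar} works by completing $Q$ to a bumping simple closed curve of $K$ whose complementary arc $I$ is chosen close to $K$; this silently requires the endpoints $a,b$ of $Q$ to lie in $K$. With $K=X$ that is automatic, but with $K=K_{i_0}$ it is not: in your setup $Z_{i_0}$ is merely the exit continuum that $J_v$ happens to meet, and nothing forces $a,b\in K_{i_0}$. The correct dichotomy is driven by the position of $Q$, not of the junction: either $Q\sm\bigcup Z_i\ne\0$, in which case you should \emph{move} $v$ into $Q\sm\bigcup Z_i$ (legitimate, since variation does not depend on the junction) and choose the junction to miss $X\cup Q\cup\bigcup Z_i$ altogether, so that $K=X$ works; or $Q\subset\bigcup Z_i$, in which case connectedness of $Q$ and disjointness of the $Z_i$ force $Q\subset Z_{i_0}$ for a single $i_0$, whence $Q\cap X\subset K_{i_0}$ and in particular $a,b\in K_{i_0}$, so $K=K_{i_0}$ is admissible. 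Without this case split your argument breaks precisely when $v\in Z_{i_0}$ but $Q\not\subset Z_{i_0}$. (The parenthetical appeal to Theorem~\ref{fmot1} in the $K=X$ case is also spurious; no fixed-point theorem is needed there, only $f(X)\subset X\cup\bigcup_i Z_i$.)
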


\begin{proof}
We will use the notation as specified in the lemma. Suppose first that $Q\sm
\bigcup Z_i\ne \0$ and choose $v\in Q\sm \bigcup Z_i$.
 Since $v\in Q\sm \bigcup Z_i$ and $X\cup (\bigcup
Z_i)$ is non-separating, there exists a junction $J_v$, with $v\in Q$, such
that $J_v\cap[X\cup Q\cup \bigcup Z_i]= \{v\}$ and, hence,  $J_v\cap f(X)\subset\{v\}$. Now the
desired result follows from Corollary~\ref{posvar}.

Observe that if $Q\sm \cup Z_i=\0$ then $Q\subset Z_i$ for some $i$ and so
$Q\cap X\subset K_i$. In particular, both endpoints $a,b$ of $Q$ are contained
in $K_i$. Choose a point $v\in Q$. Then again there is a junction connecting
$v$ and infinity outside $X$ (except possibly for $v$). Since all sets $Z_j,
j\ne i$ are positively distant from $v$ and $X\cup (\bigcup_{i\ne j} Z_i)$ is
non-separating, the junction $J_v$ can be chosen to avoid all sets $Z_j, j\ne
i$. Now, by (3) $f(K_i)\cap J_v\subset \{v\}$, hence by Lemma~\ref{posvar}
$\var{f}{Q}\ge 0$.
\end{proof}

Lemma~\ref{posvar+} is applied in Theorem~\ref{fixpt} in which we show that a
map which strongly scrambles the boundary has fixed points. In fact, it is a
major technical tool in our other results too. Indeed, if we can construct a
bumping simple closed curve $S$ around $X$ such that the endpoints of its
links map back into $X$ while these links move completely off themselves, the
lemma would imply that the variation of $S$ is non-negative. By
Theorem~\ref{FMOT} this would imply that the index of $S$ is positive. Hence by
Theorem~\ref{basic} there are fixed points in $T(S)$. Choosing $S$ to be
sufficiently tight we see that there are fixed points in $X$.

For the sake of convenience we now sketch the proof of Theorem~\ref{fixpt}
which allows us to emphasize the main ideas rather than details. The main steps
in constructing $S$ are as follows. First we assume by way of contradiction
that a map $f:\C\to \C$ has no fixed points in $X$. Then by Theorem~\ref{fmot1}
it implies that $f(X)\not\subset X$ and that $f(K_i)\not\subset K_i$ for any
$i$. By the definition of strong scrambling then $f(K_i)$ is far away from
$Z_i$ for any $i$. Now, since there are no fixed points in $X$ we can choose
the links in $S$ to be very small so that they will all move off themselves.
However some of them will have endpoints mapping outside $X$ which prevents us
from directly applying Lemma~\ref{posvar+} to them. These links will be
enlarged by concatenating them so that the images of the endpoints of these
concatenations are inside $X$ \emph{and} these concatenations still map off
themselves. The bumping simple closed curve $S$ then remains as before, however
the representation of $S$ as the union of links changes because we enlarge some
of them. Still, the construction shows that Lemma~\ref{posvar+} applies to the
new ``bigger'' links and as before this implies the existence of a fixed point
in $X$.

To achieve the goal of replacing some links in $S$ by their concatenations we
consider the links which are mapped outside $X$ in detail using the fact that
$f$ strongly scrambles the boundary (indeed, all other links are such that
Lemma~\ref{posvar+} already applies to them). The idea is to consider the links
of $S$ whose concatenation is a connected piece of $S$ mapping into one $Z_i$.
Then if we begin the concatenation right before the images of links enter $Z_i$
and stop it right after the images of the links exit $Z_i$ we will have one
condition of Lemma~\ref{posvar+} satisfied because the endpoints of the thus
constructed new ``big'' concatenation link $T$ of $S$ map into $X$.

We now need to verify that $T$ moves off itself under $f$. Indeed, this is easy
to see for the end-links of $T$: each end-link has the image ``crossing'' into
$Z_i$ from $X\sm Z_i$, hence the images of end-links are close to $K_i$.
However the sets $K_i$ are mapped far away from $Z_i$ by the definition of
strong scrambling and because none of $K_j$'s is invariant by the assumption.
This implies that the end-links themselves must be far away from $Z_i$. If now
we move from link to link inside $T$ we see that those links cannot approach
$Z_i$ too closely because if they do they will have to ``cross over $K_i$''
into $Z_i$, and then their images will have to be close to the image of $K_i$
which is far away from $Z_i$, a contradiction with the fact that all links in $T$
have endpoints which map  into $Z_i$. In other words, the dynamics of $K_i$ prevents the new
bigger links from getting even close to $Z_i$ under $f$ which shows that they
move off themselves as desired. As before, we now apply Theorem~\ref{FMOT} to
see that $\ind{f}{S}=\var{f}{S}+1$ and then Theorem~\ref{basic} to see that
this implies the existence of a fixed point in $X$.

Given a compact set $K$ denote by $B(K, \e)$ the open set of all points
whose distance to $K$ is less than $\e$.

\begin{thm} \label{fixpt}
Suppose that $f:\C\to\C$ strongly scrambles the boundary of $X$.
Then $f$ has a fixed point in $X$.
\end{thm}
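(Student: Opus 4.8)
The plan is to argue by contradiction, following the sketch given in the text but filling in the construction of the bumping simple closed curve $S$ and its allowable partition. So assume $f$ has no fixed point in $X$. By Theorem~\ref{fmot1} (applied to $X$, which is non-separating) we must have $f(X)\not\subset X$, so at least one exit continuum $K_i$ is present; moreover for each $i$ we cannot have $f(K_i)\subset K_i$ (else Theorem~\ref{fmot1} applied to $K_i\subset X$ would give a fixed point in $K_i\subset X$). Hence by the definition of strong scrambling, condition (3a) forces $f(K_i)\cap Z_i=\0$ for every $i$, and by the remark following Definition~\ref{scracon} there is $\e_0>0$ with $d(f(K_i),Z_i)>2\e_0$ for all $i$; since also $f$ has no fixed points in the compact set $X$, we may further shrink $\e_0$ so that $d(x,f(x))>2\e_0$ for all $x\in X$. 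First I would fix $\e\in(0,\e_0)$ small enough that, by uniform continuity of $f$ on a neighborhood of $X$, points within $\e$ of $X$ have images within $\e_0$ of $f(X)$, and $\e$ is smaller than the pairwise distances among the finitely many disjoint continua $Z_i$.

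Next I would build $S$. Choose a bumping simple closed curve $S$ of $X$ contained in $B(X,\e)$, together with an allowable partition $a_0<a_1<\dots<a_n<a_{n+1}=a_0$ so fine that every link $[a_j,a_{j+1}]$ has diameter $<\e$ and each $a_j\in S\cap$ (a point very near $X$); since there are no fixed points in $X$ and $S$ is tight, each link moves off itself, so $f([a_j,a_{j+1}])\cap[a_j,a_{j+1}]=\0$ — this uses $\dia<\e<\e_0$ and $d(x,f(x))>2\e_0$ on $X$. The links whose two endpoints already map into $X$ are fine: Lemma~\ref{posvar+} applies to them directly once we check $f(\{a_j,a_{j+1}\})\subset X$ and the off-itself condition, both arranged. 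The remaining links are those for which some endpoint $f(a_j)\notin X$; such an endpoint lies in $\bigcup Z_i$ by property (1) of Definition~\ref{scracon}, and by the spacing of the $Z_i$ a maximal run of consecutive links whose endpoints' images meet $\bigcup Z_i$ meets only one $Z_i$. I would group each such maximal run into a single concatenated "big" link $T$, extended by one link on each side so that the endpoints of $T$ map into $X\sm Z_i$ (such enlargement terminates because the images of the endpoints $a_0,\dots$ cannot all lie in $Z_i$: the endpoints near the "entrance" map near $K_i$, and just outside the run they map outside $\bigcup Z_i$). This yields a new allowable partition of the \emph{same} curve $S$ into links, each of which is either an original "good" link or a "big" link $T$ with both endpoints mapping into $X$.

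The crux is to verify that each big link $T$ still moves off itself, i.e.\ $f(T)\cap T=\0$. Here I would use the dynamics of $K_i$ exactly as in the sketch: $T$ is covered by original links each of diameter $<\e$, whose endpoints' images lie in $Z_i\cup B(Z_i,\e_0)$ (they lie in $\bigcup Z_j$ or, for the two extreme links, within $\e_0$ of $f(X)$ and near $K_i$). If some point $t\in T$ had $f(t)\in T$, then $f(t)\in B(X,\e)$, so $f(t)$ is within $\e_0$ of $f(X)$; tracing which link of $T$ contains $t$, its image is within $\e_0$ of $Z_i$; but to have $f(t)$ this close to $Z_i$ while $t$ moves along $T$, continuity forces a link of $T$ whose image crosses near $K_i=Z_i\cap X$, whence that link itself lies near $K_i\subset X$, and then \emph{its} image lies within $\e_0$ of $f(K_i)$, which by the choice of $\e_0$ is at distance $>2\e_0$ from $Z_i$ — contradicting that all links of $T$ have endpoint-images in $Z_i\cup B(Z_i,\e_0)$. (This is the step I expect to be the main obstacle: turning the intuitive "can't cross over $K_i$" argument into a clean contradiction requires carefully propagating the $\e_0$-estimates link by link, and is where the full proof will spend most of its effort.) Once every link of the new partition — good links and big links alike — satisfies the hypotheses of Lemma~\ref{posvar+}, we get $\var{f}{Q}\ge 0$ for each such link $Q$, hence $\var{f}{S}=\sum_Q\var{f}{Q}\ge 0$. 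By Theorem~\ref{FMOT}, $\ind{f}{S}=\var{f}{S}+1\ge 1\ne 0$, so by Theorem~\ref{basic} $f$ has a fixed point in $T(S)$; and since $S\subset B(X,\e)$ with $\e$ arbitrarily small and $X$ is non-separating, a standard compactness/nested-intersection argument places a fixed point in $X$ itself, contradicting our assumption. Hence $f$ has a fixed point in $X$. \qed
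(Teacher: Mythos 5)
Your plan is correct and follows essentially the same route as the paper's own proof: reduce to the case $f(X)\not\subset X$ and $f(K_i)\cap Z_i=\0$ for all $i$, take a tight bumping simple closed curve with small links, concatenate the consecutive links whose endpoint-images fall into a single $Z_i$ into ``big'' links with endpoints mapping into $X$, show these still move off themselves because points of $X$ whose images are near $Z_i$ must be far from $K_i$ (since $f(K_i)$ is far from $Z_i$) and hence far from $Z_i$ (since $X\cap Z_i=K_i$), and finally combine Lemma~\ref{posvar+}, Theorem~\ref{FMOT} and Theorem~\ref{basic}. The one point where your justification is circular rather than merely sketchy is the termination of the grouping --- the existence of at least one partition point whose image lies in $X$ away from every $Z_i$ --- which the paper establishes as Claim~1 using $f(\bd(X))\supset \bd(f(X))$ from Theorem~\ref{orient} together with the fact that the continuum $\bd(X)$ cannot be covered by the finitely many disjoint nonempty compacta $f^{-1}(\ol{B(Z_i,3\eta)})\cap\bd(X)$.
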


\begin{proof}
If $f(X)\subset X$ then the result follows from \cite{fmot07}. Similarly, if
there exists $i$ such that $f(K_i)\subset K_i$, then $f$ has a fixed point in
$T(K_i)\subset X$ and we are also done. Hence we may assume $f(X)\sm X\ne\0$
and $f(K_i)\cap Z_i=\0$ for all $i$. Suppose that $f$ is fixed point free. Then
there exists $\e>0$ such that for all $x\in X$, $d(x,f(x))>\e$. We may assume
that $\e<\min\{d(Z_i,Z_j)\mid i\ne j\}$. We now choose constants $\eta', \eta,
\da$ and a bumping simple closed curve $S$ of $X$ so that the following holds.

\begin{enumerate}

\item $0<\eta'<\eta<\da<\e/3$.

\item
For each $x\in X\cap B(K_i,3\da)$, $d(f(x), Z_i)>3\da$.

\item
For each $x\in X\sm B(K_i,3\da)$, $d(x,Z_i)>3\eta$.

\item For each $i$ there exists a point $x_i\in X$ such that $f(x_i)=z_i\in Z_i$ and $d(z_i, X)>3\eta$.

\item $X\subset T(S)$ and $A=X\cap S=\{a_0<a_1<\dots<a_n<a_{n+1}=a_0\}$ in the
positive circular order around $S$.

\item $f|_{T(S)}$ is fixed point free.

\item For the closure $Q_i=[a_i, a_{i+1}]$ of a component of $S\sm X$,
$\dia(Q)+\dia(f(Q))<\eta$.

\item For any two points $x, y\in X$ with $d(x, y)<\eta'$ we have $d(f(x), f(y))<\eta$.

\item $A$ is an $\eta'$-net in $\bd(X)$.

\end{enumerate}

Observe that by the triangle inequality, $Q_i\cap f(Q_i)=\0$ for every $i$.

\smallskip

\noindent \textbf{Claim 1.} \emph{There exists a point $a_j$ such that
$f(a_j)\in X\sm \cup \ol{B(Z_i, \eta)}$.}

\noindent \emph{Proof of Claim 1}. Set $\ol{B(Z_i, 3\eta)}=T_i$ and show that
there exists a point $x\in \bd(X)$ with $f(x)\in X\sm \cup T_i$. Indeed,
suppose first that $n=1$. Then $f(K_1)\subset X\sm T_1$ and we can choose any
point of $K_1\cap \bd(X)$ as $x$. Now, suppose that $n\ge 2$. Observe that the
sets $T_i$ are pairwise disjoint compacta. By Theorem~\ref{orient}
$f(\bd(X))\supset \bd(f(X))$. Hence there are points $x_1\ne x_2$ in $\bd(X)$
such that $f(x_1)\in Z_1\subset T_1, f(x_2)\in Z_2\subset T_2$. Since the sets
$f^{-1}(T_i)\cap X$ are pairwise disjoint non-empty compacta we see that the
set $V=\bd(X)\sm \cup f^{-1}(T_i)$ is non-empty (because $\bd(X)$ is a
continuum). Now we can choose any point of $V$ as $x$.

It remains to notice that by the choice of $A$ we can find a point $a_j$ such
that $d(a_j, x)<\eta'$ which implies that $d(f(a_j), f(x))<\eta$ and hence
$f(a_j)\in X\sm \cup \ol{B(Z_i, \eta)}$ as desired.\qed

There exists a point $x_1$ such that $f(x_1)=z_1$ is more than $3\eta$-distant
from $X$. We can find $a\in A$ such that $d(a, x_1)<\eta'$ and hence
$f(a)\nin X$ is at least $2\eta$-distant from $X$. On the other hand,
by Claim 1 there are points of $A$ mapped into $X$. Let $<$ denote the circular order on the set $\{0,1,\dots,n+1\}$
defined by $i<j$ if $a_i<a_j$ in the positive circular order around $S$. Then we can find
$n(1)<m(1)$ such that the following claims hold.

\begin{enumerate}

\item $f(a_{n(1)-1})\in X\sm \cup Z_i$.

\item $f(a_r)\in f(X)\sm X$ for all $r$ with $n(1)\le r\le m(1)-1$
(and so, since $\dia(f(Q_u))<\e/3$ for any $u$ and
$d(Z_s,Z_t)>\e$ for all $s\ne t$, there exists $i(1)$ with
$f(a_r)\in Z_{i(1)}$ for all $n(1)\le r<m(1)$).

\item $f(a_{m(1)}\in X\sm \cup Z_i$.

\end{enumerate}

Consider the arc $Q'_1=[a_{n(1)-1},a_{m(1)}]\subset S$ and show that
$f(Q'_1)\cap Q'_1=\0$. As we walk along $Q'_1$, we begin outside $Z_{i(1)}$
at $f(a_{n(1)-1})$, then enter $Z_{i(1)}$ and walk inside it, and then exit
$Z_{i(1)}$ at $f(a_{m(1)})$. Since every step in this walk is rather short
($\dia(Q_i)+\dia(f(Q_i))<\eta$), we see that $d(f(a_{n(1)-1}), Z_{i(1)})<\eta$
and $d(f(a_{m(1)}), Z_{i(1)})<\eta$. On the other hand for each $r, n(1)\le r<m(1)$ we have
$f(a_r)\in Z_{i(1)}$, hence we see that $d(f(a_r), Z_{i(1)})<\eta$
for each $r, n(1)\le r<m(1)$.
This implies that $d(a_r, K_{i(1)})>3\delta$ (because otherwise $f(a_r)$
would be farther away from $Z_{i(1)}$) and so $d(a_r), Z_{i(1)})>3\eta$
(because $a_r\in X\sm B(K_{i(1)}, 3\delta)$). Since $\dia(Q)+\dia(f(Q))<\eta$,
then $d(Q'_1, Z_{i(1)})>2\da>2\eta$. On the other hand, $d(f(a_r), Z_{i(1)})<\eta$
similarly implies that $d(f(Q'_1), Z_{i(1)})<2\eta$. Thus indeed $f(Q'_1)\cap Q'_1=\0$.

This allows us to replace the original division of $S$ into its prime links
$Q_1, \dots, Q_n$ by a new one in which $Q'_1$ plays the role of a new prime link;
in other words, we simply delete the points $\{a_{n(1)},\dots, a_{m(1)-1}\}$ from
$A$. Continuing in the same manner and moving along $S$, in the end we obtain a finite set
$A'=\{a_0=a'_0<a'_1<\dots<a'_k\}$ such that for each $i$ we have $f(a'_i)\in X\subset
T(S)$ and for each arc $Q'_i=[a'_i,a'_{i+1}]$ we have $f(Q'_i)\cap Q'_i=\0$. Hence, by
Theorem~\ref{FMOT}, $\ind{f}{S}=\sum_{Q'_{i}} \var{f}{Q'_i}+1$. Since by
Lemma~\ref{posvar+}, $\var{f}{Q'_i}\ge 0$ for all $i$, $\ind{f}{S}\ge 1$
contradicting the fact that $f$ is fixed point free in $T(S)$.
\end{proof}

\subsection{Maps with isolated fixed points}

Now we consider maps $f\in \p$ with isolated fixed points; denote the set of
such maps by $\p_i$. We need a few definitions.

\begin{defn}\label{crit}
Given a map $f:X\to Y$ we say that $c\in X$ is a \emph{critical point of $f$}
if for any neighborhood $U$ of $c$, there exist $x_1\ne x_2\in U$ such that
$f(x_1)=f(x_2)$. Hence, if $x$ is not a critical point of $f$, then $f$ is
locally one-to-one near $x$.
\end{defn}

If a point $x$ belongs to a continuum collapsed under $f$ then $x$ is critical;
also any point which is an accumulation point of collapsing continua is
critical. However in this case the map around $x$ may be monotone. A more
interesting case is when the map around $x$ is not monotone; then $x$ is a
\emph{branchpoint} of $f$ and it is critical even if there are no collapsing
continua close by. One can define the \emph{local degree} $\deg_f(a)$ as the
number of components of $f^{-1}(y)$, for a point $y$ close to $f(a)$, which are non-disjoint from
a small neighborhood of $a$. Then branchpoints are exactly the points at which
the local degree is more than $1$. Notice that since we do not assume any
smoothness, a critical point may well be both fixed (periodic) and be such that
small neighborhoods  of $c=f(c)$ map over themselves by $f$.

The next definition is closely related to that of the index of the map on
a simple closed curve.

\begin{defn}\label{indpt}
Suppose that $x$ is a fixed point of a map $f\in \p_i$.
Then the \emph{local index of $f$ at $x$}, denoted by
$\ind{f}{x}$, is defined as $\ind{f}{S}$
where $S$ is a small simple closed curve around $x$.
\end{defn}

It is easy to see that if $f\in \p_i$, then the local
index is well-defined, i.e. does not depend on the choice of $S$. By modifying
a translation map one can give an example of a homeomorphism of the plane which
has exactly one fixed point $x$ with local index $0$. Still in some cases the
local index at a fixed point must be positive.

\begin{defn}\label{toprepat}
A fixed point $x$ is said to be \emph{topologically repelling} if there exists
a sequence of simple closed curves $S_i\to \{x\}$ such that $x\in
\Int(T(S_i))\subset T(S_i)\subset \Int(T(f(S_i))$. A fixed point $x$ is said to
be \emph{topologically attracting} if there exists a sequence of simple closed
curves $S_i\to \{x\}$ not containing $x$ and such that $x\in
\Int(T(f(S_i))\subset T(f(S_i))\subset \Int(T(S_i))$.
\end{defn}

\begin{lem}\label{ind1} If $a$ is a topologically repelling fixed point
then $\ind{f}{a}=\deg_f(a)\ge 1$ where $d$ is the local degree. If however $a$ is a
topologically attracting fixed point then $\ind{f}{a}=1$.
\end{lem}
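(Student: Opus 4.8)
The plan is to compute the index $\ind{f}{S_i}$ directly from the definition of a topologically repelling (resp.\ attracting) fixed point, using the fact that for small enough curves around $x$ the map $f$ is light and has local degree $\dg_f(a)$ by $f\in\p_i$. First I would treat the repelling case. Fix a small simple closed curve $S=S_i$ with $x\in\Int(T(S))\subset T(S)\subset\Int(T(f(S)))$, so that in particular $x\notin f(S)$ (because $f(S)$ bounds a region whose interior contains $T(S)\ni x$, while $f(S)$ itself is disjoint from that interior). The key observation is that $\ind{f}{S}$ depends only on the homotopy class, within maps from $S$ to $\C$ with no fixed points on $S$, of the map $z\mapsto f(z)-z$. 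Since $T(S)$ is a small disk and $f(S)$ encircles it, the vector $f(z)-z$ is a small perturbation of $f(z)-x_0$ for any fixed basepoint $x_0\in T(S)$; more precisely, the straight-line homotopy $(z,t)\mapsto f(z)-((1-t)z+t x_0)$ never vanishes on $S$, because $f(z)\notin T(S)$ while $(1-t)z+t x_0\in T(S)$. Hence $\ind{f}{S}$ equals the winding number of $f|_S$ about the point $x_0$, which by definition is $\win(f,S,x_0)$, and for $x_0$ a point of $T(S)\setminus f(S)$ close to $f(x)$ this is exactly the local degree $\dg_f(a)$. Finally $\dg_f(a)\ge 1$ since $f$ is positively oriented (the winding number of $f$ on any simple closed curve about a point of $f(T(S))\setminus f(S)$ is strictly positive), giving $\ind{f}{a}=\dg_f(a)\ge 1$.

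For the attracting case, fix a small curve $S=S_i$ with $x\notin S$ and $x\in\Int(T(f(S)))\subset T(f(S))\subset\Int(T(S))$. Now the roles are reversed: $f(S)$ is a small curve deep inside $T(S)$, and in particular $f(z)$ stays uniformly far from $z\in S$, so again $x\notin f(S)$ and we may replace $f(z)-z$ by $f(z)-x_0$ via a non-vanishing straight-line homotopy, where now $x_0$ is any fixed point of $T(f(S))$, e.g.\ $x_0=x$ itself. Thus $\ind{f}{S}=\win(f,S,x)$. But here $x\in\Int(T(f(S)))$ and $f(S)$ is the image of $S$, and the point $x$ need not be a critical value in a way that inflates the degree; rather, because $S$ is a tiny curve \emph{not} surrounding $x$, the argument of $f(z)-x$ makes exactly one full turn: intuitively, $f$ maps the small disk $T(S)$ (which does not contain $x$) onto a region whose boundary $f(S)$ \emph{does} surround $x$ exactly once in the positive sense, as $f\in\p$ guarantees positivity and the degree-one count follows from the fact that $x\notin T(S)$ so $T(S)$ contains no preimage-component of $x$ and the local picture is that of a single orientation-preserving wrap. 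I would make this precise by noting $\win(f,S,x)=\win(f|_{T(S)},S,x)$ counts, with multiplicity, the solutions of $f=x$ inside $T(S)$; since $x$ is a fixed point isolated from other fixed points and $S$ excludes $x$, one shows the only contribution is a single simple one, giving $\ind{f}{x}=1$.

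The main obstacle I anticipate is the attracting case: justifying that $\win(f,S,x)=1$ rather than some larger positive integer. For the repelling case the identification with the local degree $\dg_f(a)$ is clean because $S$ surrounds $x$ and one is literally counting preimages of a point near $f(x)$ inside $T(S)$, matching Definition~\ref{crit}'s local degree. In the attracting case, however, the curve $S$ does not surround $x$, so $x$ has no preimage inside $T(S)$ coming from ``$x$ being fixed''; the relevant count is of the solutions of $f(z)=x$ with $z\in T(S)$, and one must argue there is exactly one such solution counted once — this is where the isolatedness of the fixed point $x$ and the fact that $S_i\to\{x\}$ (so $T(S_i)$ shrinks to $x$, forcing the unique relevant preimage to be $x$ itself, approached from inside via $f(x)=x\in\Int(T(f(S_i)))$) must be combined carefully with light-ness of $f$ near $x$. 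I would handle this by choosing $S_i$ small enough that $T(f(S_i))\subset\Int(T(S_i))$ and $f$ restricted to a neighborhood of $T(S_i)$ has local degree $\dg_f(x)$ at $x$; then $\win(f,S_i,x)$ counts preimages of $x$ in $T(S_i)$, but the only point of $T(S_i)$ that $f$ could send to $x$ near the fixed point is a point whose image is $x$, and since $x\notin T(S_i)$ while $f(x)=x$, the nesting $T(f(S_i))\subset\Int(T(S_i))$ forces this preimage to lie outside $T(S_i)$ unless it contributes trivially — a short compactness/continuity argument then pins the count at exactly $1$.
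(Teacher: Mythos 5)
Your repelling case is essentially the paper's own argument: homotope the tail of the vector $f(z)-z$ from the moving point $z\in S$ to the fixed basepoint $a$, identify $\ind{f}{S}$ with $\win(f,S,a)$, and recognize this winding number as the local degree $\deg_f(a)\ge 1$. That part is fine.

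The attracting case, however, contains a genuine error. First, you misread Definition~\ref{toprepat}: the chain $x\in\Int(T(f(S_i)))\subset T(f(S_i))\subset\Int(T(S_i))$ forces $x\in\Int(T(S_i))$, so the small curve $S_i$ \emph{does} surround $x$; ``not containing $x$'' only means $x\notin S_i$. Your entire discussion of ``$S$ does not surround $x$'' and ``$x$ has no preimage inside $T(S)$'' therefore starts from a false premise. Second, and more importantly, the conclusion you are driving toward, $\ind{f}{S}=\win(f,S,x)$, is false in the attracting case whenever $\deg_f(x)>1$: for a positively oriented map, $\win(f,S,x)$ counts preimages of $x$ in $T(S)$ with multiplicity, and since $x\in T(S)$ is itself such a preimage this winding number equals $\deg_f(x)$, not $1$. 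Concretely, for $f(z)=z^2/2$ at the attracting fixed point $0$ one has $\ind{f}{S}=1$ (the winding of $z(z/2-1)$ on a small circle) but $\win(f,S,0)=2$; your straight-line homotopy $f(z)-((1-t)z+tx)=z(z/2-(1-t))$ vanishes at $z=2(1-t)\in S$, which is exactly where it must break down. The correct ``similar'' argument homotopes the \emph{head} rather than the tail: since $f(S)\subset T(f(S))\subset\Int(T(S))$ and $\Int(T(S))$ is simply connected, $f|_S$ is null-homotopic in $\Int(T(S))$ to a constant $a\in\Int(T(S))$; along such a homotopy the vector $f_t(z)-z$ never vanishes because $f_t(z)\in\Int(T(S))$ while $z\in S$, and the terminal vector $a-z$ winds exactly once as $z$ traverses $S$ positively. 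That yields $\ind{f}{x}=1$ independently of the local degree, which is the content of the attracting half of the lemma.
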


\begin{proof}
Consider the case of the repelling fixed point $a$. Then it follows that, as
$x$ runs along a small simple closed curve $S$ with $a\in T(S)$, the vector
from $x$ to $f(x)$ produces the same winding number as the vector from $a$ to
$f(x)$, and it is easy to see that the latter equals $\deg_f(a)$. The argument
with attracting fixed point is similar.
\end{proof}

If however $x$ is neither topologically repelling nor topologically attracting,
then $\ind{f}{x}$ could be greater than $1$ even in the non-critical case.
Indeed, take a neutral fixed point of a rational function. Then it follows that
if $f'(x)\ne 1$ then $\ind{f}{x}=1$ while if $f'(x)=1$ then $\ind{f}{x}$ is the
multiplicity at $x$ (i.e., the local degree of the map $f(z)-z$ at $x$). This
is related the following useful theorem. It is a version a more general,
topological \emph{argument principle} stated in the convenient for us form.

\begin{thm}\label{argupr}
Suppose that $f\in \p_i$. Then for any simple closed curve $S\subset\C$ which
contains no fixed points of $f$ its index equals the sum of local indices taken
over all fixed points in $T(S)$.
\end{thm}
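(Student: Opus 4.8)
The plan is to reduce the statement to Theorem~\ref{FMOT} (or, more directly, to the additivity of winding number / index over a subdivision of the region $T(S)$). First I would fix a simple closed curve $S$ with no fixed points of $f$ on it, and recall that since $f\in\p_i$ there are only finitely many fixed points of $f$, so only finitely many of them, say $x_1,\dots,x_m$, lie in the open region $\Int(T(S))$; none lies on $S$ by hypothesis. Around each $x_j$ choose a small simple closed curve $S_j$ with $x_j\in\Int(T(S_j))$, with the $T(S_j)$ pairwise disjoint, each $T(S_j)\subset\Int(T(S))$, and each $S_j$ so small that $T(S_j)$ contains no fixed point other than $x_j$; this is possible precisely because the fixed points are isolated. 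By Definition~\ref{indpt} we have $\ind{f}{x_j}=\ind{f}{S_j}$, and this is well-defined.

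The heart of the argument is the additivity: $\ind{f}{S}=\sum_{j=1}^m\ind{f}{S_j}$. I would prove this exactly as one proves the analogous statement for winding numbers. The index $\ind{f}{S}$ is the degree of the map $\ol v(t)=\frac{f(z(t))-z(t)}{|f(z(t))-z(t)|}$ from $\uc$ to $\uc$; this degree depends only on the homotopy class of the map $z\mapsto \frac{f(z)-z}{|f(z)-z|}$ defined on $S$, and more generally the vector field $z\mapsto f(z)-z$ is nonvanishing on the compact region $R=T(S)\sm\bigcup_j \Int(T(S_j))$ (its zeros in $T(S)$ are exactly $x_1,\dots,x_m$, all removed). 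A standard cobordism/cut argument: connect $S$ to the curves $S_j$ by disjoint arcs inside $R$ to express $\bd R$, suitably oriented, as a single (degenerate) loop homotopic in $\C\sm\{\text{zeros}\}$ to the concatenation $S - (S_1+\dots+S_m)$; since $f(z)-z\ne 0$ on all of $R$, the map $\frac{f(z)-z}{|f(z)-z|}$ extends continuously over $R$, forcing the total degree around $\bd R$ to be $0$, i.e. $\ind{f}{S}-\sum_j\ind{f}{S_j}=0$. Alternatively, one can invoke Theorem~\ref{FMOT} directly: choose $S$ and the $S_j$ with allowable partitions, build a bumping curve structure on $R=T(S)\sm\bigcup\Int(T(S_j))$ viewed appropriately, and read off the index sum from the variation sum; but the homotopy argument is cleaner and self-contained.

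The main obstacle I anticipate is purely bookkeeping: making the cut-arcs-and-orientations argument rigorous in the generality of an arbitrary simple closed curve $S$ (not smooth, not polygonal) and arbitrary perfect positively oriented $f$, where $T(S)\sm\bigcup\Int(T(S_j))$ can be a complicated non-manifold region. One clean way around this is to observe that $\ind{\cdot}{\cdot}$ depends only on $f|_S$ up to homotopies of maps $S\to\C$ avoiding the diagonal, approximate $f$ on a neighborhood of $R$ by a smooth map with the same nonvanishing of $f(z)-z$ on $R$ (perturbation does not change any of the finitely many indices since all the relevant curves avoid fixed points), and then apply the classical argument principle / Poincar\'e–Hopf-type additivity for the smooth vector field $f(z)-z$ on $R$. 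Once additivity is established, the theorem is immediate: $\ind{f}{S}=\sum_j\ind{f}{S_j}=\sum_j\ind{f}{x_j}$, the sum over all fixed points of $f$ in $T(S)$.
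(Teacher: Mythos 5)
Your proposal is correct, and it is the classical excision proof of the topological argument principle: remove pairwise disjoint small disks $T(S_j)$ around the finitely many fixed points in $T(S)$, observe that the displacement map $z\mapsto\frac{f(z)-z}{|f(z)-z|}$ is defined and continuous on the multiply connected region $R=T(S)\sm\bigcup_j\Int(T(S_j))$, and conclude $\ind{f}{S}=\sum_j\ind{f}{S_j}=\sum_j\ind{f}{x_j}$ by additivity of degree over $\bd R$. The paper itself states Theorem~\ref{argupr} as a known form of the topological argument principle and does not prove it in the text; where the authors do establish the underlying additivity (in an auxiliary lemma bounding the number of fixed points in $T(S)$ from below by $|\ind{f}{S}|$), they argue differently: they construct a homotopy $H^t$ sweeping $S$ inward through a nested family of simple closed curves $S^t$ that shrink to a non-fixed point, arranged so that each fixed point $x_i$ is crossed at a unique time $t(i)$, and they track the index, which is locally constant in $t$ and jumps by exactly $\ind{f}{x_i}$ at each crossing; since the index of the terminal tiny curve is $0$, the sum of the jumps recovers $\ind{f}{S}$. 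The two routes are equivalent in substance. Yours avoids having to build the sweeping isotopy and the genericity requirement that the $t(i)$ be distinct, at the cost of the cut-arc/orientation bookkeeping on a non-smooth multiply connected region (which you correctly flag and can handle by approximation, since the index is a homotopy invariant of the nonvanishing displacement field); the paper's sweep avoids the cut arcs but needs the existence of the shrinking family of simple closed curves in $T(S)$ and the one-crossing-at-a-time normalization. One small imprecision in your write-up: $f\in\p_i$ guarantees finitely many fixed points only in the compact set $T(S)$ (a closed discrete subset of a compactum is finite), not globally in $\C$; this is all you use, so the argument stands.
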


Theorem~\ref{argupr} implies Theorem~\ref{basic} but provides more information.
In particular if $S$ were a simple closed curve and if we knew that the local
index at any fixed point $a\in T(S)$ is $1$, it would imply that $\ind{f}{S}$
equals the number $n(f, S)$ of fixed points of $f$ in $T(S)$. By the above
analysis this holds if all $f$-fixed points in $T(S)$ are either repelling, or
attracting, or neutral and such that $f$ has a complex derivative $f'$ in a
small neighborhood of $x$, and $f'(x)\ne 1$.

In the spirit of the previous parts of the paper, we are still concerned with
finding $f$-fixed points inside non-invariant continua of which $f$ (strongly)
scrambles the boundary. However we now specify the types of fixed points we are
looking for. Thus, the main result of this subsection proves the existence of
specific fixed points in non-degenerate continua satisfying the appropriate
boundary conditions and shows that in some cases such continua must be
degenerate. It is in this latter form that we apply the result later on in
Section 4.

Given a non-separating continuum $X$,  a ray $R\subset \C\sm X$ from $\infty$ which lands on
$x\in X$ (i.e., $\ol{R}\sm R=\{x\}$) and a crosscut $Q$ of $X$ we say that $Q$ and $R$ \emph{cross essentially}
provided there exists $r\in R$ such that the subarc $[x,r]\subset \ol{R}$ is contained in $\sh(Q)$.
The next definition complements the previous one.

\begin{defn}\label{repout}
If $f(p)=p$ and $p\in \bd(X)$ then we say that $f$ \emph{repels outside $X$ at
$p$} provided there exists a ray $R\subset \C\sm X$ from $\infty$ which lands on $p$
and a sequence of simple closed curves $S^j$ bounding
closed disks $D^j$ such that $D^1\supset D^2\supset\dots, $  $\cap D^j=\{p\}$,
$f(D^1\cap X)\subset X$, $f(\ol{S^j\sm X})\cap D^j=\0$ and for each $j$ there
exists a component $Q^j$ of $S^j\sm X$ such that $Q^j\cap R\ne\0$ and  $\var{f}{Q^j}\ne 0$. If $f\in
\p$  and $f$ scrambles the boundary of $X$, then by Lemma~\ref{posvar+}, for any
component $Q$ of $S^j\sm X$ we have $\var{f}{Q}\ge 0$ so that in this case
$\var{f}{Q^j}>0$.
\end{defn}


The next theorem is the main result of this subsection.

\begin{thm}\label{locrot}
Suppose that $f\in \p_i$, and $X\subset\C$ is a non-separating continuum or a
point. Moreover, the following conditions hold.

\begin{enumerate}

\item\label{fV}
For each fixed point $p\in X$ we have that $\ind{f}{x}=1$ and $f$ repels
outside $X$ at $p$.

\item
The map $f$ scrambles the boundary of $X$. Moreover, either $f(K_i)\cap
Z_i=\0$, or there exists a neighborhood $W_i$ of $K_i$ with $f(W_i\cap
X)\subset X$.

\end{enumerate}

Then $X$ is a point.
\end{thm}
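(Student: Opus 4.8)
The plan is to argue by contradiction: suppose $X$ is a non-degenerate non-separating continuum satisfying (1) and (2), and derive a contradiction with Theorem~\ref{fixpt} (or rather with the refined count coming from Theorem~\ref{argupr}). First I would invoke Theorem~\ref{fixpt}: since $f$ strongly scrambles the boundary of $X$ (condition (2) gives exactly the strong scrambling hypothesis, because either $f(K_i)\cap Z_i=\0$ or $f$ maps a neighborhood of $K_i$ in $X$ back into $X$, and in the latter case one shrinks $Z_i$ or replaces $K_i$ by $T(K_i)$ to arrange $f(K_i)\subset K_i$), there is a fixed point $p\in X$; moreover $p\in\bd(X)$ unless $X$ is degenerate — indeed if $p\in\Int(X)$ then since $f\in\p_i$ has isolated fixed points and $\ind{f}{p}=1$, a small disk around $p$ lies in $\Int(X)$ and one can push the whole construction inward, but the cleaner route is: the hypothesis ``$f$ repels outside $X$ at $p$'' in (1) only makes sense, and is only used, for $p\in\bd(X)$, so I will treat interior fixed points separately and show they cannot be the only fixed points.

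The heart of the argument is to build, for a fixed point $p\in\bd(X)$ at which $f$ repels outside $X$, a bumping simple closed curve $S$ of $X$ (tight, so that $T(S)$ contains no fixed point other than those in $X$) that \emph{also} encloses the small disks $D^j$ from Definition~\ref{repout}, and to compute $\ind{f}{S}$ two ways. On one hand, by Theorem~\ref{argupr}, $\ind{f}{S}=\sum \ind{f}{q}$ over fixed points $q\in T(S)$; by hypothesis (1) each such local index is $1$, so $\ind{f}{S}$ equals the number of fixed points in $X$, which is finite and positive. On the other hand, I want to realize $S$ via an allowable partition whose links include the crosscuts $Q^j$ with $\var{f}{Q^j}>0$, so that by Theorem~\ref{FMOT}, $\ind{f}{S}=\sum_i\var{f}{Q_i}+1\ge 1 + \sum_j\var{f}{Q^j} > 1 + (\text{number of fixed points}-1)$ if there are enough repelling-outside contributions. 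More precisely: each fixed point $p_\ell\in\bd(X)$ contributes a strictly positive chunk $\var{f}{Q^{j}_\ell}>0$ to the variation sum, on top of the $+1$, forcing $\ind{f}{S}\ge 1 + (\#\text{fixed points in }X)$, which contradicts the argument-principle count $\ind{f}{S}=\#\text{fixed points in }X$. Running this comparison carefully — making sure the links $Q^j$ really appear in a single allowable partition of one $S$, that the rays $R$ landing at distinct fixed points can be taken disjoint and crossing the corresponding crosscuts essentially, and that the ``off-itself'' condition $f(Q_i)\cap Q_i=\0$ holds for every link — is the technical crux.

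I expect the main obstacle to be exactly this bookkeeping of a \emph{single} bumping simple closed curve simultaneously witnessing the strong-scrambling structure (so Lemma~\ref{posvar+} gives $\var{f}{Q_i}\ge 0$ on ordinary links, after the link-concatenation trick from the proof of Theorem~\ref{fixpt}) \emph{and} capturing, for each boundary fixed point, one of the special crosscuts $Q^j$ with positive variation coming from $f$ repelling outside $X$ there. One has to interleave the construction in the proof of Theorem~\ref{fixpt} — where links whose endpoints escape $X$ get absorbed into bigger links mapping through a single $Z_i$ — with the insertion near each fixed point $p_\ell$ of the crosscut $Q^{j(\ell)}\subset S^{j(\ell)}$, choosing $j(\ell)$ large enough that $D^{j(\ell)}$ is tiny and disjoint from the other fixed points and from the $Z_i$'s, and checking that $Q^{j(\ell)}$ can serve as a link of the global $S$ (its endpoints lie in $X$ since $f(D^1\cap X)\subset X$, and $f(Q^{j(\ell)})\cap Q^{j(\ell)}=\0$ since $f(\ol{S^{j(\ell)}\sm X})\cap D^{j(\ell)}=\0$ and $Q^{j(\ell)}\subset \ol{D^{j(\ell)}}$). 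Once $S$ is assembled, Theorem~\ref{FMOT} plus Lemma~\ref{posvar+} give $\ind{f}{S}\ge 1+\sum_\ell \var{f}{Q^{j(\ell)}}\ge 1+(\#\text{boundary fixed points})$; but Theorem~\ref{argupr} together with (1) gives $\ind{f}{S}=\#\text{fixed points in }X$. Since interior fixed points are handled by a parallel, easier argument (a topologically repelling-type disk around an interior fixed point already forces an index-$1$ contribution without the extra $+1$ discrepancy, and their mere existence together with $\bd(X)\ne\0$ still over-counts), these two estimates are incompatible unless $X$ has no fixed point in its boundary and hence, by Theorem~\ref{fixpt} forcing one, unless $X$ is a single point. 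That contradiction completes the proof.
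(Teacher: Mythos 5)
Your proposal follows essentially the same route as the paper: a double count of $\ind{f}{S}$ on a single tight bumping simple closed curve — the argument principle (Theorem~\ref{argupr}) with all local indices equal to $1$ gives $\ind{f}{S}=m$, while Theorem~\ref{FMOT}, Lemma~\ref{posvar+}, and the insertion of one positive-variation crosscut $Q^{j}$ per fixed point (from the ``repels outside'' hypothesis) as links of $S$, combined with the link-concatenation trick of Theorem~\ref{fixpt}, give $\ind{f}{S}\ge m+1$. The only stray remarks are the appeal to Theorem~\ref{fixpt} for existence of a fixed point and the separate treatment of interior fixed points, neither of which is needed (the $m=0$ case is already covered by the index comparison, and hypothesis (1) places every fixed point where ``repels outside'' is defined), but these do not affect the correctness of the core argument, which matches the paper's.
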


\begin{proof}
Suppose that $X$ is not a point. Since $f\in \p_i$, there exists a simply
connected neighborhood $V$ of $X$ such that all fixed points
$\{p_1,\dots,p_m\}$ of $f|_{\ol{V}}$ are contained in $X$. We will show that
then $f$ must have at least $m+1$ fixed points in $V$, a contradiction. The
proof will proceed like the proof of Theorem~\ref{fixpt}: we construct a tight
bumping simple closed curve $S$ such that $X\subset T(S)\subset V$. We will
show that for an appropriate $S$, $\var{f}{S}\ge m$. Hence
$\ind{f}{S}=\var{f}{S}+1\ge m+1$ and by Theorem~\ref{argupr} $f$ must have at
least $m+1$ fixed points in $V$.

Let us choose neighborhoods $U_i$ of exit continua
$K_i$ satisfying conditions listed below.

\begin{enumerate}

\item For $n_1<i\le n$ by assumption (2) of the theorem we may assume
that $f(U_i\cap X)\subset X$.

\item For $1\le i\le n_1$ we may assume that $d(U_i\cup Z_i,f(U_i))>0$.

\item We may assume that $T(X\cup \bigcup \ol{U_i})\subset V$ and
$\ol{U_i}\cap\ol{U_k}=\0$ for all $i\ne k$.

\item We may assume that every fixed point of $f$
contained in $\ol{U_i}$ is contained in $K_i$.

\end{enumerate}

Let $\{p_1,\dots,p_t\}$ be all fixed points of $f$ in $X\sm \bigcup_i K_i$ and
let $\{p_{t+1},\dots, p_m\}$ be all the fixed points contained in $\bigcup
K_i$. Observe that then by the choice of neighborhoods $U_i$ we have $p_i\in
X\sm \ol{\cup U_s}$ if $1\le i\le t$. Also, it follows that for each $j, t+1\le
j\le n$ there exists a unique $r_j, n_1<r_j\le n$ such that $p_j\in K_{r_j}$.
For each fixed point $p_j\in X$ choose a ray $R_j\subset \C\sm X$ landing on $p_j$, as specified in Definition~\ref{repout},
and  a small simple closed curve $S_j$
bounding a closed disk $D_j$ such that the following claims hold.

\begin{enumerate}
\item $D_i\cap R_j=\0$ for all $i\ne j$,

\item $f(\ol{S_j\sm X})\cap D_j=\0$.

\item $T(X\cup \bigcup_j D_j)\subset V$.

\item $[D_j\cup f(D_j)]\cap [D_k\cup f(D_k)]=\0$ for all $j\ne k$.

\item  $f(D_j\cap X)\subset X$.

\item Denote by $Q(j,s)$ the components of $S_j\sm X$; then there exists $Q(j,s(j))$,
 a component of $S_j\sm X$, with $\var{f}{Q(j,s(j))}>0$ and $Q(j,s(j))\cap R_j\ne\0$.


\item $[D_j\cup f(D_j)]\cap \bigcup \ol{U_i}=\0$ for all $1\le j\le t$.

\item If $t<j\le n$ then $[D_j\cup f(D_j)]\subset U_{r_j}$.

\end{enumerate}
Note that by (1) for all $i\ne j$, $\sh(Q(j,s(j))\cap Q(i,s(i))=\0$.
We need to choose a few constants. First choose $\e>0$ such that for all $x\in
X\sm \bigcup D_j$, $d(x,f(x))>3\e$. Then by continuity we can choose $\eta>0$
such that for each set $H\subset V$ of diameter less than $\eta$ we have
$\dia(H)+\dia(f(H))<\e$ and for each crosscut $C$ of $X$ disjoint from $\cup
D_j$ we have that $f(C)$ is disjoint from $C$ (observe that outside $\cup D_j$
all points of $X$ move by a bounded away from zero distance). Finally we choose
$\da>0$ so that the following inequalities hold:

\begin{enumerate}

\item $3\da<\e$,

\item $3\da<d(Z_i,Z_j)$ for all $i\ne j$,

\item \label{eU} $3\da<d(Z_i, [X\cup f(X)]\sm [Z_i\cup U_i])$,

\item $3\da<d(K_i,\C\sm U_i)$,

\item if $f(K_i)\cap Z_i=\0$, then $3\da<d(f(U_i),Z_i\cup U_i)$.

\end{enumerate}

Also, given a crosscut $C$ we can associate to its endpoints external angles
$\al, \be$ whose rays land at these endpoints from the appropriate side of $X$
determined by the location of $C$ (so that the open region of the plane
enclosed by a tight equipotential between $R_\al$ and $R_\be$, the segments of
the rays from the equipotential to the endpoints of $C$, and $C$ itself, is
disjoint from $X$). Thus we can talk about the angular measure of $Q(j, s(j))$;
denote by $\be$ the minimum of all such angular measures taken over all
crosscuts $Q(j, s(j))$.

Now, choose a bumping simple closed curve $S'$ of $X$ which satisfies two
conditions: all its links are (a) less than $\da$ in diameter, and (b) are of
angular measures less than $\be$. Clearly this is possible. Then we amend $S'$
as follows. Let us follow $S'$ in the positive direction starting at a link
outside $\cup D_j$. Then at some moment for the first time we will be walking
along a link of $S'$ which enters some $D_j$. As it happens, the link $L'$ of
$S'$ intersects some $Q(j, s)$ with endpoints $a, b$ and enters the shadow
$\sh(Q(j,s))$. Later on we will be walking outside $\sh(Q(j,s))$ moving along
some link $L''$. In this case we replace the entire segment of $S'$ from $L'$
to $L''$ by three links: the first one is a deformation of $L'$ which has the
same initial endpoint as $L'$ and the terminal point as $a$, then $Q(j, s)$,
and then a deformation of $L''$ which begins at $b$ and ends at the same
terminal point as $L''$. In this way we make sure that for all crosscuts $Q(j,
s)$ either they are links of $S'$ or they are contained in the shadow of a link
of $S'$. Moreover, by the choice of $\be$ crosscuts $Q(j, s(j))$ will have to
become links of our new bumping simple closed curve $S$. By the choice of
$\eta$ and by the properties of crosscuts $Q(j, s)$ it follows that any link of
$S$ is disjoint from its image, and for each $j$, $Q(j,s(j))\subset S$ and
$\var{f}{Q(j,s(j))}>0$.

We want to compute the variation of $S$. Each link $Q(j,s(j))$ contributes at
least $1$ towards $\var{f}{S}$, and we want to show that all other links have
non-negative variation. To do so we want to apply Lemma~\ref{posvar+}. Hence we
need to verify two conditions on a crosscut listed in Lemma~\ref{posvar+}. One
of them follows from the previous paragraph: all links of $S$ move off
themselves. However the other condition of Lemma~\ref{posvar+} may not be
satisfied by some links of $S$ because some of their endpoints may map off $X$.
To ensure that for our bumping simple closed curve endpoints $e$ of its links
map back into $X$ we have to enlarge links of $S$ and replace some of them by
their concatenations (this is similar to what was done in Theorem~\ref{fixpt}).
Then we will have to check if the new ``bigger'' links still have images
disjoint from themselves.

Suppose that $ X\cap S=A=\{a_0<a_1<\dots<a_n\}$ and $a_0\in A$ is such that
$f(a_0)\in X$ (the arguments similar to those in Theorem~\ref{fixpt} show that
we can may this assumption without loss of generality). Let $t'$ be minimal
such that $f(a_{t'})\not\in X$ and $t''>t'$ be  minimal such that
$f(a_{t''})\in X$. Then $f(a_{t'})\in Z_i$ for some $i$. Denote by $[a_l, a_r]$
a subarc of $S$ with the endpoints $a_r$ and $a_l$ and moving from $a_l$ to
$a_r$ is the positive direction. Since every component of $[a_{t'},a_{t''}]\sm
X$ has diameter less than $\da$, $f(a_t)\in Z_i\sm X$ for all $t'\le t<t''$.
Moreover, for $t'\le t<t''$, $a_t\not \in U_i$. To see this note that if
$f(K_i)\cap Z_i=\0$, then by the above made choices $f(U_i)\cap Z_i=\0$, and if
$f(K_i)\cap Z_i\ne\0$, then $f(U_i\cap X)\subset X$ by the assumption. Hence it
follows from the property (\ref{eU}) of the constant $\da$ that
$f([a_{t'-1},a_{t''}])\cap [a_{t'-1},a_{t''}]=\0$ and we can remove the points
$a_t$, for $t'\le t<t''$ from the partition $A$ of $S$. By continuing in the
same fashion we obtain a subset $A'\subset A$ such that for the closure of each
component $C$ of $S\sm A'$, $f(C)\cap C=\0$ and for both endpoints $a$ and $a'$
of $C$, $\{f(a),f(a')\}\subset X$. Moreover, for each $j$, $Q(j,j(s))$ is a
component of $S\sm A'$.

Now we can apply a variation of the standard argument sketched in Section 2
after Theorem~\ref{basic} and applied in the proof of Theorem~\ref{fixpt}; in
this variation instead of Theorem~\ref{basic} we use the fact that $f$
satisfies the argument principle. Indeed, by Theorem~\ref{FMOT} and
Lemma~\ref{posvar+}, $\ind{f}{S}\ge \sum_j \var{f}{Q(j,j(s))}+1\ge m+1$ and by
the Theorem~\ref{argupr} $f$ has at least $m+1$ fixed points in $T(S)\subset
V$, a contradiction.
\end{proof}

Theorem~\ref{locrot} implies the following

\begin{cor}\label{degenerate}
Suppose that $f$ and a non-degenerate $X$ satisfy all the conditions stated in
Theorem~\ref{locrot}. Then either $f$ does not repel outside $X$ at one of its
fixed points, or the local index at one of its fixed points is not equal to
$1$.
\end{cor}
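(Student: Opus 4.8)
The plan is to argue by contraposition: assume that $f$ \emph{does} repel outside $X$ at every one of its fixed points, and that the local index at every fixed point of $f$ equals $1$. Then all the hypotheses of Theorem~\ref{locrot} are in force. Indeed, condition (2) of Theorem~\ref{locrot} is identical to the second hypothesis of the corollary (that $f$ scrambles the boundary of $X$ with the stated extra property on each $K_i$), and condition (1) of Theorem~\ref{locrot} is exactly the combination of the two things we have just assumed away: $\ind{f}{p}=1$ for each fixed point $p\in X$ and $f$ repels outside $X$ at each such $p$. Hence Theorem~\ref{locrot} applies verbatim and forces $X$ to be a point, contradicting the standing assumption in the corollary that $X$ is non-degenerate.

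Concretely, I would first restate: suppose for contradiction that at every fixed point $p$ of $f$ in $X$ we have both $\ind{f}{p}=1$ and $f$ repels outside $X$ at $p$. (If $f$ has no fixed point in $X$ at all, these conditions are vacuously satisfied.) Next, observe that the corollary's hypothesis ``$f$ and $X$ satisfy all conditions stated in Theorem~\ref{locrot}'' already delivers $f\in\p_i$, that $X$ is a non-separating continuum, and that $f$ scrambles the boundary of $X$ with the dichotomy $f(K_i)\cap Z_i=\0$ or $f(W_i\cap X)\subset X$ for a neighborhood $W_i$ of $K_i$ — so condition~(2) of Theorem~\ref{locrot} holds. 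Together with the contradiction hypothesis, condition~(1) of Theorem~\ref{locrot} holds as well. Therefore all hypotheses of Theorem~\ref{locrot} are met, and its conclusion gives that $X$ is a point. This contradicts non-degeneracy of $X$. Hence the assumption fails, i.e.\ either $f$ does not repel outside $X$ at one of its fixed points, or the local index at one of its fixed points is not $1$.

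Since this is essentially a restatement-and-invoke argument, there is no serious obstacle: the only point requiring a line of care is matching the logical form of hypothesis~(1) of Theorem~\ref{locrot} — a conjunction quantified over all fixed points in $X$ — against the corollary's conclusion, which is the negation of that conjunction; this is just the contrapositive of Theorem~\ref{locrot} specialized to the case of non-degenerate $X$, together with the observation that the remaining hypotheses of Theorem~\ref{locrot} are imported wholesale from the corollary's assumption. No new constructions, constants, or curves are needed beyond those already built in the proof of Theorem~\ref{locrot}.
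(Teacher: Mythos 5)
Your proposal is correct and matches the paper's intent exactly: the paper offers no separate proof, presenting the corollary as the immediate contrapositive of Theorem~\ref{locrot}, which is precisely your argument. Your reading of ``all the conditions'' as the structural hypotheses (membership in $\p_i$ and condition~(2)), with the conclusion being the failure of condition~(1), is the right interpretation of the slightly loose wording.
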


The last lemma of this section gives a sufficient and verifiable condition for
a fixed point $a$ belonging to a locally invariant continuum $X$ to be such
that the map $f$ repels outside $X$; we apply the lemma in the next section.

\begin{lem}\label{repel}
Suppose that $f:\C\to\C$ is positively oriented, $X\subset\C$ is a continuum
and $p$ is a fixed point of $f$ such that:

\begin{enumerate}

\item
there exists a neighborhood $U$ of $p$ such that $f|_U$ is one-to-one and
$f(U\cap X)\subset X$,

\item
there exists a closed disk $D\subset U$ containing $p$ in its interior such
that $f(\partial D)\cap D=\0$ and $\partial D\sm X$ has at least two
components,

\item
there exists a  ray $R\subset\sphere\sm X$ from infinity such that
$\ol{R}=R\cup \{p\}$, $f|_R:R\to R$ is a homeomorphism and for each $x\in R$,
$f(x)$ separates $x$ from $\infty$ in $R$.

\end{enumerate}

Then there exists a component $C$ of $\partial D\sm X$ so that $C\cap R\ne\0$,  $\var{f}{C}=+1$
and $f$ repels  outside $X$ at $p$.
\end{lem}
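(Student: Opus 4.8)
The plan is to work entirely inside the disk $D$ and along the ray $R$, using hypotheses (1)--(3) to build the nested sequence of simple closed curves required by Definition~\ref{repout}. First I would locate the distinguished component of $\partial D\sm X$. Since $R$ is a ray from infinity landing on $p$ and $p\in\bd(X)$, the ray $R$ must cross $\partial D$; because $f|_R$ pushes points of $R$ toward $p$ (hypothesis (3)), $R$ enters $\Int(D)$ and stays there near $p$. Let $C$ be the component of $\partial D\sm X$ with $C\cap R\ne\0$ (such a component exists since $R\subset\C\sm X$ meets $\partial D$; that $\partial D\sm X$ has at least two components by (2) guarantees $C$ is a genuine crosscut-type arc and that $C$ together with an arc in $X$ can close up). Using that $f|_U$ is one-to-one and $f(U\cap X)\subset X$, the endpoints of $C$ lie in $X$ and map into $X$, and $f(C)\cap C=\0$ follows from $f(\partial D)\cap D=\0$; so $\var{f}{C}$ is well-defined.

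Next I would compute $\var{f}{C}=+1$ directly from the index/variation formalism. The key is hypothesis (3): along $R$, $f$ is a fixed-point-free homeomorphism moving every point strictly toward $p$, so near $p$ the vector $f(x)-x$ points consistently ``inward along $R$,'' i.e. toward $p$. Meanwhile on the part of $\partial D$ lying in $X$, since $f|_U$ is one-to-one and orientation preserving (being positively oriented and injective on $U$) and $f(U\cap X)\subset X$, the map behaves like a local homeomorphism. Closing $C$ up with a suitable bumping arc through $X$ to a simple closed curve $S$ with $X\cap D\subset T(S)$ and using Theorem~\ref{FMOT} (or directly the winding-number interpretation of variation sketched after the definition of variation), one sees $\ind{f}{S}$ picks up exactly the local index at $p$, which by one-to-one-ness and the local degree being $1$ equals $1$; since the complementary arcs contribute nothing essential, $\var{f}{C}=\ind{f}{S}-1=+1$. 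I would make the ``inward along $R$'' computation precise by choosing $D$ small enough (shrinking $D\subset U$) that $C$ is a short crosscut essentially crossed by $R$ in the sense defined just before Definition~\ref{repout}, so that the single net crossing of the junction based on $C$ is forced.

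Finally I would verify $f$ repels outside $X$ at $p$ by exhibiting the required nested sequence $S^j\to\{p\}$. Take $D^j$ to be the closed disks bounded by $S^j:=\partial D_j$ where $D_j\subset U$ is a shrinking sequence of disks containing $p$ in their interiors with $\cap D_j=\{p\}$; the existence of such disks with $f(\partial D_j)\cap D_j=\0$ follows because $p$ is an isolated fixed point and $f$ is continuous (shrink until the displacement $|f(x)-x|$ dominates the diameter). By hypothesis (1), $f(D^1\cap X)\subset f(U\cap X)\subset X$, and $f(\ol{S^j\sm X})\cap D^j=\0$ again from choosing $D_j$ tiny relative to the displacement of $f$ off $p$. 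The component $Q^j$ of $S^j\sm X$ meeting $R$ is the analogue of $C$ for the smaller disk, and the computation of the previous paragraph applies verbatim to give $\var{f}{Q^j}=+1\ne0$. This is exactly Definition~\ref{repout}, so $f$ repels outside $X$ at $p$.

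The main obstacle I anticipate is the variation computation $\var{f}{C}=+1$: one must be careful that the crosscut $C$ is positioned so that the ray $R$ crosses it \emph{essentially} and that no spurious crossings of the junction occur from the behavior of $f$ on the $X$-side of $\partial D$. The clean way around this is to close $C$ into a simple closed curve $S$ bounding a small disk around $p$, apply Theorem~\ref{FMOT} to get $\ind{f}{S}=\sum\var{f}{Q_i}+1$, use one-to-one-ness of $f|_U$ (hence local index $=$ local degree $=1$) together with Lemma~\ref{ind1}-type reasoning to get $\ind{f}{S}=1$, and observe that the non-$R$ links have zero variation because on them $f$ is a local homeomorphism respecting $X$; hence the single remaining link $C$ must carry $\var{f}{C}=0$ unless $R$ forces a crossing, and hypothesis (3) forces exactly one, giving $+1$. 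Pinning down that ``exactly one'' is the delicate point, and it is precisely where the monotone push-toward-$p$ condition along $R$ in hypothesis (3) is indispensable.
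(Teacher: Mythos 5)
Your overall architecture (find the component $C$ of $\partial D\sm X$ meeting $R$, show $\var{f}{C}=+1$, then repeat with shrinking disks to get the sequence required by Definition~\ref{repout}) matches the intended use of the lemma, and your construction of the nested disks $D^j$ is fine. But the core step, the computation $\var{f}{C}=+1$, has a genuine gap, and the route you propose for it would fail.

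You want to close $C$ into a small simple closed curve $S$ around $p$, apply Theorem~\ref{FMOT}, argue $\ind{f}{S}=1$ from injectivity of $f|_U$, claim the other links contribute nothing, and extract $\var{f}{C}$. Theorem~\ref{FMOT} does not apply in this configuration: it requires $X\subset T(S)$ and that the partition points map into $T(S)$, whereas here $S\approx\partial D$ is a tiny circle with $X\not\subset T(S)$ and $f(\partial D)\cap D=\0$, so the endpoints of the links map \emph{outside} $T(S)$. The variation $\var{f}{C}$ is computed with respect to the big continuum $X$, via a junction running to infinity in $\C\sm X$; it is not a local quantity, and there is no identity tying it to the local index of $f$ at $p$ together with the variations of the other components of $\partial D\sm X$ (whose variations, moreover, need not vanish just because $f$ is a local homeomorphism there). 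Your own bookkeeping exposes the problem: if $\ind{f}{S}=1$ and the other links had variation $0$, your formula would force $\var{f}{C}=0$, and you then assert that hypothesis (3) ``forces exactly one crossing, giving $+1$'' without reconciling the contradiction. That reconciliation is exactly the missing content.

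The paper's proof takes a different and essentially unavoidable route: uniformize, letting $\vp:\disk^\iy\to\sphere\sm X$ be conformal, set $T=\vp^{-1}(R)$ (landing at $\hp\in S^1$), $Q_1=\vp^{-1}(C)$, and $F=\vp^{-1}\circ f\circ\vp$. The key claim is purely topological: $F(Q_1)$ separates $Q_1$ from $\iy$ in $\disk^\iy$, i.e.\ the image crosscut lies strictly ``deeper'' toward $\hp$ than $Q_1$ itself. This is proved by splitting $T$ at $Q_1$, using hypothesis (3) to see that $F(a)$ separates $a$ from $\iy$ along the ray, and using that $F$ is a local orientation-preserving homeomorphism near $Q_1$ to see that the image of a path hugging $Q_1$ enters the shadow of $F(Q_1)$ and never leaves, so $\hp\in\sh(F(Q_1))$. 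Once this separation is known, one chooses the junction at a point $v\in Q_1$ so that each of its three rays meets $F(Q_1)$ exactly once, which reads off $\var{F}{Q_1}=+1$ directly from the definition of variation; conformality transports this to $\var{f}{C}=+1$. If you want to salvage your approach, you must replace the Theorem~\ref{FMOT} bookkeeping by some such direct argument that the image of $C$ crosses over to the far side of $C$ (as seen from infinity in $\C\sm X$); hypothesis (3) alone, applied only along $R$, does not control how $f(C)$ sits relative to a junction without the orientation and separation analysis above.
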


\begin{proof}
We may assume that $X\sm U$ contains a continuum. Let $\disk^\iy=\sphere\sm
\ol{\disk}$ be the open disk at infinity and let $\vp:\disk^\iy\to \sphere\sm
X$ be a conformal map such that $\vp(\iy)=\iy$. Then $T=\vp^{-1}(R)$ is a ray
in $\disk^\infty$ which compactifies on a point $\hp\in S^1$. Let $Q_j$ be all
components of $\vp^{-1}(\partial D\sm X)$. Then each $Q_j$ is a crosscut of
$\disk^\iy$. Let $O=\{z\in\disk^\iy\mid f\circ \vp(z)\in\sphere\sm X$ and
define $F:O\to\disk^\iy$ by $F(z)=\vp^{-1}\circ f\circ \vp(z)$. Note that
$T\cup \bigcup Q_j\subset O$. We may assume that $\ol{Q_1}$ separates $\hp$
from $\iy$ in $\ol{\disk^\iy}$ and that no other $Q_j$ separates $Q_1$ from
$\iy$ in $\disk^\iy$.

\smallskip

\noindent \textbf{Claim.} $F(Q_1)$ separates $Q_1$ from $\iy$ in $\disk^\iy$.

\smallskip

\noindent \emph{Proof of Claim.} Let $T_\iy$ be the component of $T\sm Q_1$
which contains $\iy$ and let $T_p$ be the component of $\ol{T}\sm Q_1$ which
contains $\hp$. Let $a=\ol{T_\iy}\cap Q_1$ and $b=\ol{T_p}\cap Q_1$. Choose a
point $b'\in T_p$ very close to $b$ so that the subarc $[b,b']\subset T_p$ is
contained in $\vp^{-1}(D)$. Let $T'_p\subset T_p$ be the closed subarc from
$\hp$ to $b'$. Choose an open arc $A$ in the bounded component of $\disk^\iy\sm
Q_1$, very close to $Q_1$ from $a$ to the point $b'\in T_p$ so that
$f|_{T'_p\cap A\cup T_\iy}$ is one to-one. Put $Z=T'_p\cap A\cup T_\iy$, then
$Q_1\cap Z=\{a\}$ and $F(Q_1)\cap F(Z)=\{F(a)\}$. Since $F$ is a local
orientation preserving homeomorphism near $a$, $F(Z)$ enters the bounded
component of $\disk^\iy\sm F(Q_1)$ at $F(a)$ and never exits this component
after entering it. Moreover, if $q$ is an endpoint of $Q_1$, then points very
close to $q$ on $Q_1$ and their images are on the same side of $T$. Since
$F(a)$ separates $a$ from $\iy$ on $Z$ and an initial segment of $T_p$ (with
endpoint $\hp$) is contained in $F(Z)$, $\hp\in\sh(F(Q_1))$. This completes the
proof of the claim.

Let us compute the variation $\var{F}{Q_1}$ of the crosscut $Q_1$ with respect
to the continuum $S^1$. Since the computation is independent of the choice of
the Junction \cite{fmot07}, we can choose a junction $J_v$ with junction point
$v\in Q_1$ so that each of the three rays $R_+$, $R_i$ and $R_-$ intersect
$F(Q_1)$ in exactly one point. Hence $\var{F}{Q_1}=+1$. Since $\vp$ is an
orientation preserving homeomorphism, $\var{f}{\vp(Q_1)}=+1$ and we are done.

\end{proof}

\section{Applications}

The results in the previous section can be used to obtain results in complex
dynamics (see for example \cite{bco08}).
 We will show that in certain cases continua (e.g., impressions of
external rays) must be degenerate. Suppose that $P:\C\to\C$ is a complex
polynomial of degree $d$ with a connected  Julia set $J$. Let the filled-in
Julia set be denoted by $K=T(J)$. We denote the external rays of $K$ by
$R_\al$. It is well known \cite{douahubb85} that if the degree of $P$ is $d$
and $\sigma:\C\to\C$ is defined by $\sigma(z)=z^d$, then
$P(R_\al)=R_{\sigma(\al)}$.

Let for $\la\in\C$, $L_\la:\C\to\C$ be defined by $L_\la(z)=\la z$. Suppose
that $p$ is a fixed point in $J$ and $\la=f'(p)$ with $|\la|>1$ (i.e., $p$ is a
\emph{repelling} fixed point). Then there exists  neighborhoods $U\subset V$ of
$p$ and a conformal isomorphism $\vp:V\to\disk$ such that for all $z\in U$,
$P(z)=\vp^{-1}\circ L_\la \circ \vp(z)$. Now, a fixed point $p$ is
\emph{parabolic} if $P'(p)=e^{2\pi i r}$ for some rational number
$r\in\mathbb{Q}$. A nice description of the local dynamics at a parabolic fixed
point can be found in \cite{miln00}.

If $p$ is a repelling or parabolic fixed point then \cite{douahubb85} there
exist $k\ge 1$ external rays $R_{\al(i)}$ such that
$\si|_{\{\al(1),\dots,\al(k)\}}: \{\al(1),\dots,\al(k)\}\to
\{\al(1),\dots,\al(k)\}$ is a permutation, $P(R_{\al(i)})=R_{\si(\al(i))}$, for
each $j$, $R_{\al(j)}$ lands on $p$ and no other external rays land on $p$.
Also, if $P(R_{\al(i)})=R_{\al(i)}$ for some $i$, then $\si(\al(j))=\al(j)$ for
all $j$. It is known that two distinct external rays are not homotopic in the
complement of $K$. Given an external ray $R_\al$ of $K$ we denote by
$\Pi(\al)=\ol{R_\al}\sm R_\al$ the \emph{principle set of $\al$}, and by
$\imp(\al)$ the \emph{impression of $\al$} (see \cite{miln00}). Given a set $A\subset S^1$, we
extend the above notation by $\Pi(A)=\bigcup_{\al\in A} \Pi(\al)$ and
$\imp(A)=\bigcup_{\al\in A} \imp(\al)$. Let $X\subset K$ be a non-separating
continuum or a point such that:

\begin{enumerate}

\item \label{P1}
Pairwise disjoint non-separating continua/points $E_1\subset X, \dots, E_m\subset X$ and
finite sets of angles $A_1=\{\al^1_1, \dots, \al^1_{i_1}\}, \dots,
A_m=\{\al^m_1, \dots, \al^m_{i_m}\}$ are given with $i_k\ge 2, 1\le k\le m$.

\item \label{P2}
We have $\Pi(A_j)\subset E_j$ (so the set $E_j\cup
(\cup^{i_j}_{k=1} R_{\al^j_k})=E'_j$ is closed and connected).

\item \label{P3}
$X$ intersects a unique component $C$ of $\C\sm \cup E'_j$, such that $X\sm
\bigcup E_j=C\cap K$.

\end{enumerate}

We call such $X$ a \emph{general puzzle-piece} and call the continua $E_i$ the
\emph{exit continua} of $X$. Observe that if $U$ is a Fatou domain then either
a general puzzle-piece $X$ contains $U$, or it is disjoint from $U$. For each
$j$, the set $E'_j$ divides the plane into $i_j$ open sets which we will call
\emph{wedges (at $E_j$)}; denote by $W_j$ the wedge which contains $X\sm E_j$.

Let us now consider the condition (1) of Theorem~\ref{locrot}. It is easy to
see that applied ``as is'' to the polynomial $P$ at parabolic points it is
actually not true. Indeed, as explained above the local index at parabolic
fixed points at which the derivative equals $1$ is greater than $1$. And
indeed, in our case there are fixed rays landing at all fixed points, therefore
\cite{miln00} the derivatives at all the parabolic points in $X$ are equal to
$1$. The idea which allows us to solve this problem is that we can change our
map $P$ inside the parabolic domains in question without compromising the rest
of the arguments and making these parabolic points topologically repelling. The
thus constructed new map $g$ will satisfy conditions of Theorem~\ref{locrot}.

\begin{lem}\label{pararepel} Suppose that $X$ is a continuum and  $p\in X$ is a parabolic point
of a polynomial $f$ and $R$ is  a fixed external
which lands at $p$. Then $f$ repels outside $X$ at $p$.
\end{lem}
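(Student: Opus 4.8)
The plan is to reduce to Lemma~\ref{repel}, after replacing $f$ by a positively oriented map that is topologically repelling at $p$. Since $R=R_\alpha$ is a fixed external ray landing at $p$, we have $\sigma_d(\alpha)=\alpha$, so by the results quoted from \cite{douahubb85} all external rays landing at $p$ are fixed, and then (being parabolic with a fixed ray) $f'(p)=1$ \cite{miln00}. By the Fatou--Leau flower theorem, in a local holomorphic coordinate with $p\mapsto 0$ the map is $\zeta\mapsto\zeta+a\zeta^{q+1}+\cdots$ for some $q\ge 1$: there are $q$ attracting petals, each lying in a Fatou component of the parabolic basin of $p$, alternating around $p$ with $q$ repelling petals on which $f$ moves points away from $p$; and $R$ reaches $p$ along one of the repelling directions, since $R\subset\C\sm K$ cannot enter a Fatou component. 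Also $f$ is locally one-to-one near $p$ because $f'(p)=1\ne0$.

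Because of the attracting petals, every small round disk $D$ with $p\in\Int D$ satisfies $f(\partial D)\cap D\ne\0$ (the image curve dips toward $p$ along the attracting axes), so hypothesis (2) of Lemma~\ref{repel} fails for $f$ itself. As indicated in the paragraph preceding the lemma, I would modify $f$ inside the (finitely many) Fatou components of the parabolic basin of $p$ — leaving $f$ unchanged elsewhere, in particular on $J$, on every external ray, and on the part of $X$ outside those components — to obtain a positively oriented map $g$ that is still one-to-one near $p$ and for which $p$ is a topologically repelling fixed point. Then $g$, $X$, $p$ and $R$ satisfy the hypotheses of Lemma~\ref{repel}: local injectivity and $g(U\cap X)\subset X$ (the latter using that $X$ is forward invariant near $p$ and that $g$ maps each parabolic Fatou component contained in $X$ into itself); a round disk $D$ with $g(\partial D)\cap D=\0$ (from topological repulsion), while $\partial D\sm X$ has at least two components because $\C\sm X$ meets the repelling sector at $p$ carrying $R$ as well as at least one more sector; and hypothesis (3), since $g=f$ on $R$ and $f|_R$ pushes points outward along the fixed ray $R$. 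Lemma~\ref{repel} then yields that $g$ repels outside $X$ at $p$, witnessed by nested curves $S^j=\partial D^j$ with $\bigcap D^j=\{p\}$, arcs $Q^j\subset S^j\sm X$ meeting $R$, and $\var{g}{Q^j}=+1$, together with $g(D^1\cap X)\subset X$.

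It remains to read this off for $f$ itself. The key observation is that the witnessing data live where $f$ and $g$ agree: choosing the $D^j$ so that $\ol{S^j\sm X}$ lies in $(\C\sm K)\cup\{p\}$ near $p$ — which is the content of $X$ occupying a full neighborhood of $p$ inside $K$, so that $X$ contains the attracting petals at $p$ (each being either contained in or disjoint from $X$) — we get $g=f$ on $\ol{S^j\sm X}$, hence $f(\ol{S^j\sm X})\cap D^j=g(\ol{S^j\sm X})\cap D^j=\0$ and $\var{f}{Q^j}=\var{g}{Q^j}=+1\ne0$; and $f(D^1\cap X)\subset X$ follows from forward invariance of $X$ near $p$, using $g=f$ off the petals and that $f$ too maps the parabolic Fatou components into themselves. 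Thus $f$ satisfies Definition~\ref{repout} at $p$, i.e.\ $f$ repels outside $X$ at $p$. I expect the main obstacle to be the surgery producing $g$ (positively oriented, unchanged off the petals, one-to-one near $p$, topologically repelling at $p$), together with the companion point of arranging $\ol{S^j\sm X}$ to avoid the petals — i.e.\ making precise the sense in which $X$ fills a neighborhood of $p$ inside $K$ in the situations where the lemma is applied — so that the variations computed for $g$ are genuinely variations of $f$.
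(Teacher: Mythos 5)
Your proposal follows essentially the same route as the paper: surgery on $f$ inside the parabolic Fatou domains at $p$ to produce a positively oriented map $g$ that is topologically repelling at $p$, an application of Lemma~\ref{repel} to $g$, and transfer of the witnessing data back to $f$ on the set where the two maps agree. The only step you leave schematic — constructing $g$ — is carried out in the paper via the Roesch--Yin theorem that the boundary of each parabolic domain is a simple closed curve on which $f$ is conjugate to $z\mapsto z^{d(i)}$, so that $f|_{\ol{F_i}}$ can be replaced by a map conjugate to $z^{d(i)}$ on the closed disk.
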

\begin{proof}Let $p\in X$ be a parabolic fixed point and let $F_i$ be the
parabolic domains containing $p$ in their boundaries $B_i$. Since there are
fixed rays landing at $p$, all $F_i$'s are forward invariant.
By a nice recent
result of Yin and Roesch \cite{roesyin08}, the boundary $B_i$ of each $F_i$ is
a simple closed curve and $f|_{B_i}$ is conjugate to the map $z\to z^{d(i)}$
for some $d(i)\ge 2$.  Let $\psi:F_i\to\disk$ be a conformal isomorphism. Since
$\bd(F_i)$ is a simple closed curve,  $\psi$ extends to a homeomorphism. Since
$f|_{B_i}$ is conjugate to the map $z\to z^{d(i)}$, it now follows that the map
$P|_{\ol{F_i}}$ can be replaced by a map topologically conjugate by $\psi$ to
the map $g_i(z)=z^{d(i)}$ on the closed unit disk. Let $g$ be the map defined
by $g(z)=P(z)$ for each $z\in\C\sm\bigcup F_i$ and $g(z)=g_i(z)$ when $z\in
F_i$. Then $g$ is clearly a positively oriented map.

The well-known analysis of the dynamics of $P$ around parabolic points
\cite{miln00} implies that $P$ repels points away from $p$ outside parabolic
domains $F_i$. In other words, we can find a sequence of simple closed curves
$S_i$ which satisfy conditions of Definition~\ref{toprepat} and show that $p$
is a topologically repelling point of $g$. Hence the local index $\ind{g}{p}$
at $p$ equals $1$. On the other hand, by Lemma~\ref{repel} and properties of
$X$ it follows that $g$ repels outside $X$ at $p$. Since $f$ and $g$ coincide
outside $X$, $f$ also repels at $p$.
\end{proof}
The following corollary follows from Theorem~\ref{locrot}.

\begin{cor}\label{pointdyn}
Suppose that $X\subset K$ is a non-separating continuum or a point. Then the
following claims hold for $X$.

\begin{enumerate}

\item\label{1}
Suppose that $X$ is a general puzzle-piece with exit continua $E_1, \dots, E_m$
such that either $P(E_i)\subset W_i$, or $E_i$ is a fixed point. If $X$ does
not contain an invariant parabolic domain, all fixed points which belong to $X$
are repelling or parabolic, and all rays landing at them are fixed, then $X$ is
a repelling or parabolic fixed point.

\item \label{2}
Suppose that $X\subset J$ is an invariant continuum, all fixed points which
belong to $X$ are repelling or parabolic, and all rays landing at them are
fixed. Then $X$ is a repelling or parabolic fixed point.

\end{enumerate}

\end{cor}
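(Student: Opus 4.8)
The plan is to derive Corollary~\ref{pointdyn} from Theorem~\ref{locrot} by verifying, in each of the two situations, that the hypotheses of that theorem hold for the map $P$ (suitably modified near parabolic domains) and the continuum $X$, so that the conclusion ``$X$ is a point'' yields exactly that $X$ is a (repelling or parabolic) fixed point.

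First I would treat case~(1). Given a general puzzle-piece $X$ with exit continua $E_1,\dots,E_m$, I would set $Z_i$ to be the union of $E_i$ with the closure of the wedge $W_i'=\C\sm (W_i\cup E'_i)$ intersected with $K$ (more precisely, the part of $K$ lying in the ``outside'' wedges at $E_j$), so that $f(X)\sm X\subset \bigcup Z_i$ follows from property~(3) of a general puzzle-piece together with the hypothesis $P(E_i)\subset W_i$ (the images of the rays $R_{\al^j_k}$ are again rays landing appropriately, and $P(E_i)\cap Z_i=\0$). Then $K_i=Z_i\cap X=E_i$ is a non-separating continuum, and the condition $P(K_i)\cap Z_i=\0$ (the strong-scrambling condition (3a), in the form $f(K_i)\cap Z_i=\0$) holds precisely because $P(E_i)\subset W_i$ while $Z_i$ lies in the complementary wedges; when instead $E_i$ is a fixed point, $f(K_i)\subset K_i$ holds trivially, so in either case $f$ strongly scrambles the boundary of $X$ and moreover assumption~(2) of Theorem~\ref{locrot} is met. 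For assumption~(1), I would invoke Lemma~\ref{repel} at each repelling fixed point $p\in X$: hypothesis~(1) of that lemma holds because $P$ is locally injective and locally $X$-invariant near a repelling point (local invariance of $X$ near $p$ follows from the puzzle-piece structure, since locally $X$ agrees with $K$ intersected with a wedge and $P$ maps rays to rays), hypothesis~(2) holds by choosing a small round disk transverse to the finitely many rays landing at $p$, and hypothesis~(3) holds because $P$ acts on the fixed ray landing at $p$ as a homeomorphism pushing points toward $\infty$ (repulsion along the ray). This gives $\var{f}{C}=+1$ for an appropriate crosscut and hence that $P$ repels outside $X$ at $p$; combined with the fact that a repelling fixed point has local index~$1$, assumption~(1) of Theorem~\ref{locrot} is verified at repelling points. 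At parabolic points $p\in X$ the local index of $P$ itself is $>1$ (derivative $1$, since there is a fixed ray), so here I would replace $P$ by the map $g$ of Lemma~\ref{pararepel}, which agrees with $P$ off the parabolic domains, is positively oriented, has $p$ as a topologically repelling fixed point (hence local index $1$ by Lemma~\ref{ind1}), and still repels outside $X$ at $p$; since $g$ and $P$ agree on $\C\sm X$ and the exit-continua/scrambling conditions only involve $f(X)\sm X$ and images of points of $X$ that lie outside $X$, I would check that $g$ also strongly scrambles the boundary of $X$ with the same $Z_i,K_i$, using the hypothesis that $X$ contains no invariant parabolic domain (so the modified pieces $F_i$ are disjoint from $X\sm E_j$ and do not interfere). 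Applying Theorem~\ref{locrot} to $g$ then forces $X$ to be a point, which since $X\subset K$ and all its fixed points are repelling or parabolic means $X$ is such a fixed point.

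Case~(2) is the special case $m=0$: if $X\subset J$ is invariant then $f(X)\subset X$, so $f$ strongly scrambles the boundary with the empty collection of exit continua (as noted after Definition~\ref{scracon}), and assumption~(2) of Theorem~\ref{locrot} is vacuous. Assumption~(1) is verified exactly as in case~(1) — Lemma~\ref{repel} at repelling fixed points (now hypothesis~(1) of that lemma is immediate from invariance of $X$), and the substitution $P\rightsquigarrow g$ of Lemma~\ref{pararepel} at parabolic fixed points — so Theorem~\ref{locrot} again yields that $X$ is a point, i.e.\ a repelling or parabolic fixed point.

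The main obstacle I anticipate is the bookkeeping needed to produce the continua $Z_i$ in case~(1) and to check that the replacement map $g$ still strongly scrambles the boundary of $X$: one must be careful that the $Z_i$ are pairwise disjoint non-separating continua whose union with $X$ is non-separating (this uses that the $E'_j$ are disjoint closed connected sets cutting the plane into wedges, one of which carries all of $X$), and that altering $P$ inside the parabolic domains does not move any point of $X$ that previously stayed in $X$ out of $X$, nor create new points of $f(X)\sm X$ outside $\bigcup Z_i$ — this is exactly where the hypothesis ``$X$ contains no invariant parabolic domain'' is essential. Once these set-theoretic verifications are in place, the two conclusions follow formally from Theorem~\ref{locrot}.
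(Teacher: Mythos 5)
Your proposal is correct and follows essentially the same route as the paper: build the exit continua $Z_i$ from the wedge structure of the general puzzle-piece (the paper takes topological hulls of the relevant components of $P(X)\sm X$ rather than all of $K$ in the outer wedges, but this is cosmetic), verify condition (2) of Theorem~\ref{locrot} using $P(E_i)\subset W_i$ or local invariance near a fixed exit point, verify condition (1) via Lemma~\ref{repel} at repelling points and the modified map $g$ of Lemma~\ref{pararepel} at parabolic points (using that no invariant parabolic domain lies in $X$, so the modification does not disturb $X$), and conclude from Theorem~\ref{locrot} that $X$ is a point. The only slip is your remark that $g$ and $P$ ``agree on $\C\sm X$'' --- they agree outside the parabolic domains, which suffices here since those domains are disjoint from $X$.
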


\begin{proof}
By way of contradiction we can assume that $X$ is not a point. Let us show that
no parabolic domain with a fixed point on its boundary can intersect $X$.
Indeed, in the case (2) $X\subset J$ and no Fatou domain intersects $X$, so there is nothing to
prove. In the case (1) observe that since $X$ is a general puzzle-piece, it has
to contain the closure of the entire parabolic domain with a fixed point, say,
$p$ on its boundary. Then the fact that all external rays landing at $p$ are
fixed implies that all parabolic domains containing $p$ in their boundaries are
invariant. Since by the assumptions $X$ contains no invariant parabolic domain,
it does not contain any of them. So, $X$ is disjoint from all parabolic domains
containing a fixed point in their boundaries.

To apply Theorem~\ref{locrot} we need to verify that its conditions apply. It
is easier to check the condition (2) first. To do so, observe first that
$f(X)\cap X\ne \0$. Indeed, otherwise no set $E_i$ is a fixed point and $f(X)$
must be contained in one of the wedges formed by some $E'_l$ but not in the
wedge $W_l$. This implies that $E_l$ neither is a fixed point, nor is mapped in
$W_l$, a contradiction. Thus, $f(X)\cap X\ne \0$ and we can think of $f(X)$ as
a continuum which ``grows'' out of $X$. Now, any component of $f(X)\sm X$ which
intersects $E_k$ for some $k$ must be contained in one of the wedges at $E_k$,
but not in $W_k$. Take the closure of their union and then its topological hull
union $E_i$ and denote it by $Z_i$. It is easy to check now that with these
sets $Z_i$ the map $P$ scrambles the boundary of $X$. Moreover, if $E_i$ is
mapped into $W_i$ then clearly $P(E_i)\cap Z_i=\0$ (because $Z_i$ is contained
in the other wedges at $E_i$ but is disjoint from $W_i$). On the other hand, if
$E_i$ is a fixed point then it is a repelling or parabolic fixed point with a
few external fixed rays landing at it. Hence in a small neighborhood $U_i$ of
$E_i$ the intersection $U_i\cap X$ maps into $X$ as desired in the condition
(2) of Theorem ~\ref{locrot}.

By Lemmas~\ref{repel} and \ref{pararepel} $P$ repels outside $X$ at any fixed
point in $X$. Moreover, using the map $g$ constructed in the proof of Lemma~\ref{pararepel},
we see that $g$ is topologically repelling at $p$ and, hence
$\ind{g}{p}=+1$.  Hence the conditions  of Theorem~\ref{locrot} are  satisfied for the map $g$.
Thus, by Theorem~\ref{locrot} we conclude that $X$ is a point as desired.
\end{proof}

The following is an immediate corollary of Theorem~\ref{pointdyn}.

\begin{cor}\label{rot-neutr}
Suppose that for a non-separating \emph{non-degenerate} continuum $X\subset K$
one of the following facts hold.

\begin{enumerate}

\item
$X$ is a general puzzle-piece with exit continua $E_1, \dots, E_m$ such that
either $P(E_i)\subset W_i$, or $E_i$ is a fixed point.

\item
$X\subset J$ is an invariant continuum.

\end{enumerate}

Then either $X$ contains a non-repelling and non-parabolic fixed point, or $X$
contains an invariant parabolic domain, or $X$ contains a repelling or
parabolic fixed point at which a non-fixed ray lands.
\end{cor}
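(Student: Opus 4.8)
The plan is to derive this as a direct logical consequence of Corollary~\ref{pointdyn} together with the trichotomy of fixed-point types (repelling, parabolic, attracting/Siegel/Cremer) and the structure of landing rays. First I would argue by contradiction: assume $X$ is a non-degenerate continuum satisfying (1) or (2), and assume that the conclusion fails, i.e.\ that (i) $X$ contains no non-repelling and non-parabolic fixed point, (ii) $X$ contains no invariant parabolic domain, and (iii) every fixed point in $X$ at which some ray lands has all its landing rays fixed. Under (i), every fixed point of $P$ lying in $X$ is repelling or parabolic. By the classical result of Douady--Hubbard (quoted in the excerpt just before Corollary~\ref{pointdyn}), repelling and parabolic fixed points of $P$ have at least one external ray landing on them, and the finite set of rays landing at such a point is permuted by $\si$; moreover, if even one of these landing rays is fixed then \emph{all} of them are fixed. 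Thus (iii) combined with this dichotomy gives precisely: every fixed point in $X$ is repelling or parabolic, and \emph{all} rays landing at it are fixed.

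Next I would observe that these are exactly the standing hypotheses of Corollary~\ref{pointdyn}, in either of its two cases. In case (2) of the present corollary, $X\subset J$ is an invariant continuum, so case (2) of Corollary~\ref{pointdyn} applies verbatim: we conclude $X$ is a repelling or parabolic fixed point, contradicting non-degeneracy. In case (1), $X$ is a general puzzle-piece whose exit continua $E_i$ each satisfy $P(E_i)\subset W_i$ or are fixed points; the only additional hypothesis needed to invoke case (1) of Corollary~\ref{pointdyn} is that $X$ contains no invariant parabolic domain, which is exactly assumption (ii). So again Corollary~\ref{pointdyn} forces $X$ to be a single (repelling or parabolic) fixed point, contradicting that $X$ is non-degenerate. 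Hence one of (i), (ii), (iii) must fail, which is precisely the stated trichotomy: either $X$ contains a non-repelling, non-parabolic fixed point, or $X$ contains an invariant parabolic domain, or $X$ contains a repelling or parabolic fixed point at which a non-fixed ray lands.

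The only genuinely non-formal point — and the step I expect to require the most care — is the logical bookkeeping in translating the negation of the conclusion into the hypotheses of Corollary~\ref{pointdyn}. Specifically, one must be sure that ``no non-repelling and non-parabolic fixed point in $X$'' really does force \emph{every} fixed point in $X$ to be repelling or parabolic (this uses that fixed points of a polynomial are isolated, so $f\in\p_i$ and the classification of fixed points is exhaustive), and that ``every fixed point with a landing ray has all its landing rays fixed'' upgrades, via the Douady--Hubbard permutation-of-rays statement and the fact that repelling/parabolic points always have at least one landing ray, to the clause ``all rays landing at fixed points of $X$ are fixed'' used in Corollary~\ref{pointdyn}. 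Everything else is a direct citation of Corollary~\ref{pointdyn} and contraposition; no new index computations, no new construction of bumping curves, and no appeal to Theorem~\ref{locrot} beyond what is already packaged inside Corollary~\ref{pointdyn}.
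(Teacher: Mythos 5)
Your proposal is correct and matches the paper's intent exactly: the paper states this as ``an immediate corollary'' of Corollary~\ref{pointdyn}, i.e.\ precisely the contrapositive argument you give, where negating the stated trichotomy reproduces the hypotheses of Corollary~\ref{pointdyn} and forces the non-degenerate $X$ to be a point. The extra appeal to Douady--Hubbard is harmless but not needed, since the negated third alternative already says directly that all rays landing at the (repelling or parabolic) fixed points of $X$ are fixed.
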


Finally, the following corollary is useful in proving the degeneracy of certain
impressions and establishing local continuity of the Julia set at some points.

\begin{cor}
Let $P:\C\to\C$ be a complex polynomial and  $R_\al$ is a fixed external ray
landing on a repelling or parabolic fixed point $p\in J$. Suppose that
$T(\imp(\al))$ contains only repelling or parabolic periodic points. Then
$\imp(\al)$ is degenerate.
\end{cor}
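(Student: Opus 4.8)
The plan is to apply Corollary~\ref{pointdyn}(\ref{1}) to the continuum $X=T(\imp(\al))$. First I would verify that $X$ is a non-separating continuum contained in $K$: the impression $\imp(\al)$ is a subcontinuum of $J$ (it is the intersection of the nested closures of the sets cut off by crosscuts shrinking toward $\al$), and hence $T(\imp(\al))\subset T(J)=K$ is a non-separating continuum or a point. If $X$ is a point we are done, so assume it is non-degenerate and aim for a contradiction.

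Next I would exhibit $X$ as a general puzzle-piece with a single exit continuum. Set $E_1=\Pi(\al)=\ol{R_\al}\sm R_\al$, the principal set of $\al$, which is a subcontinuum of $\imp(\al)\subset X$, and put $A_1=\{\al\}$ — wait, the definition of general puzzle-piece requires $i_k\ge 2$, so a single ray is not directly allowed. Here I would use the hypothesis that $R_\al$ lands at the repelling or parabolic fixed point $p$: by the Douady–Hubbard result quoted in the excerpt, the collection of \emph{all} rays landing at $p$ is a finite set $\{R_{\al(1)},\dots,R_{\al(k)}\}$ permuted by $\si$, and since $R_\al$ is fixed, \emph{all} of them are fixed and $\al$ is one of them. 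If $k\ge 2$, take $A_1=\{\al(1),\dots,\al(k)\}$, $E_1$ a suitable subcontinuum of $K$ containing $\Pi(A_1)$ together with the part of $\imp(\al)$ and the neighboring structure cut off so that condition (\ref{P3}) holds with $X$ meeting the unique component $C$ of $\C\sm E_1'$ with $X\sm E_1=C\cap K$; if $k=1$, then $R_\al$ is a single fixed ray landing at $p$ and one argues more directly (the only component of $f(X)\sm X$ near $p$ lands in a wedge, or $p$ itself can be taken as the exit point). The point is that with $E_1$ chosen this way, $P(E_1)$ lies in the appropriate wedge $W_1$ or $E_1$ reduces to the fixed point $p$, so the exit-continuum hypothesis of Corollary~\ref{pointdyn}(\ref{1}) is met.

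Then I would check the remaining hypotheses of Corollary~\ref{pointdyn}(\ref{1}): $X=T(\imp(\al))$ contains, by assumption, only repelling or parabolic periodic points, so in particular all fixed points in $X$ are repelling or parabolic; all rays landing at them are fixed (for $p$ this is the hypothesis, and for any other repelling/parabolic fixed point $q\in X$ one uses that $X$ is forward invariant under $P$ up to the growth captured by $E_1$, forcing the landing rays to be permuted, hence fixed — this needs the structure of $X$ as a puzzle-piece together with the Douady–Hubbard permutation statement); and $X$ contains no invariant parabolic domain, because an invariant parabolic domain is a Fatou component and would meet $J$ only in its boundary, whereas a general puzzle-piece either contains a whole Fatou domain or is disjoint from it — and if it contained one whose boundary met $p$, that would contradict $X$ having only the boundary structure dictated by $E_1'$. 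With all hypotheses verified, Corollary~\ref{pointdyn}(\ref{1}) gives that $X=T(\imp(\al))$ is a (repelling or parabolic) fixed point, hence $\imp(\al)$ is degenerate.

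The main obstacle I anticipate is the bookkeeping in the second step: correctly choosing the exit continuum $E_1$ and the component $C$ so that $X=T(\imp(\al))$ genuinely satisfies conditions (\ref{P1})--(\ref{P3}) of a general puzzle-piece, particularly handling the case $k=1$ (a single fixed ray, where the $i_k\ge 2$ requirement fails and one must argue that $p$ itself serves as a degenerate exit continuum) and making sure $P(E_1)\subset W_1$. Verifying that all rays landing at fixed points of $X$ other than $p$ are fixed — rather than merely periodic — also requires care, and relies on the invariance structure forced by the puzzle-piece hypothesis plus the quoted fact that a fixed ray landing at a point forces all rays there to be fixed.
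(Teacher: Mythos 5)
Your route through Corollary~\ref{pointdyn}(\ref{1}) (general puzzle-pieces with exit continua) is not the one the paper takes, and as written it has a genuine gap: the construction of the exit continuum $E_1$ and the verification of conditions (\ref{P1})--(\ref{P3}) are never actually carried out. You acknowledge this yourself --- the case $k=1$ (a single fixed ray landing at $p$, where the requirement $i_k\ge 2$ in the definition of a general puzzle-piece fails outright) is left as ``one argues more directly,'' and the claim that all rays landing at the \emph{other} fixed points of $X$ are fixed is deferred to unspecified ``invariance structure.'' Since the whole weight of the argument rests on these unverified hypotheses, the proposal does not yet constitute a proof.

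The missing idea is much simpler and makes the exit-continuum machinery unnecessary. Take $X=\imp(\al)$ itself, not $T(\imp(\al))$. Because $R_\al$ is a \emph{fixed} external ray, $P(\imp(\al))=\imp(\si(\al))=\imp(\al)$, so $X$ is an invariant subcontinuum of $J$ and case~(\ref{2}) of Corollary~\ref{pointdyn} applies directly, with no exit continua and no wedges to set up. The hypothesis that $T(\imp(\al))$ contains only repelling or parabolic periodic points supplies the condition on the fixed points of $X$; the remaining condition --- that all rays landing at fixed points of $X$ are fixed --- is what the paper checks: at $p$ this holds because one landing ray ($R_\al$) is fixed, hence by the Douady--Hubbard statement all rays landing at $p$ are fixed, and at any other fixed point $p'\in X$ a nontrivial permutation of the landing rays would force $p'$ to be a cut point of $X$ in a way incompatible with $X$ being the impression of a single angle. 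With these verified, Corollary~\ref{pointdyn}(\ref{2}) yields that $X$ is a point. If you want to salvage your approach, you would at minimum have to produce the sets $E_j$, $A_j$ explicitly, handle $k=1$, and prove $P(E_1)\subset W_1$; but the invariance of $\imp(\al)$ renders all of that moot.
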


\begin{proof}
Let $X=\imp(\al)$. Since $R_\al$ is a fixed external ray, $P(X)\subset X$.
Clearly $P$ does not rotate at $p$. Suppose that $p'$ is another fixed point of
$P$ in $X$ and $R_\ba$ is an external ray landing at $p'$. Then $P(R_\ba)$ also
lands on $p'$. If $P$ rotates at $p'$, then $p'$ is a cut point of $X$. This
would contradict the fact that $X=\imp(\al)$. Hence $P$ does not rotate at any
fixed point in $X$ and the result follows from Corollary~\ref{pointdyn}.
\end{proof}

\bibliographystyle{amsalpha}
\bibliography{/lex/references/refshort}

\providecommand{\bysame}{\leavevmode\hbox to3em{\hrulefill}\thinspace}
\providecommand{\MR}{\relax\ifhmode\unskip\space\fi MR }
\providecommand{\MRhref}[2]{%
  \href{http://www.ams.org/mathscinet-getitem?mr=#1}{#2}
}
\providecommand{\href}[2]{#2}

\end{document}